\documentclass[11pt]{amsart}
  \usepackage{amsopn, amssymb, amscd, amsmath, amsthm}
\usepackage{hyperref}
\usepackage{graphicx, graphics, epsfig}
\usepackage{enumerate}
\usepackage{xcolor}
\usepackage{mathdots}

\usepackage{lscape, pdflscape}  
\usepackage{array, multirow} 


\topmargin 0cm \evensidemargin 0cm \oddsidemargin 0cm \textwidth 16cm \textheight 22cm

\allowdisplaybreaks


\newcommand{\raa}{{\rm (a)}}
\newcommand{\rbb}{{\rm (b)}}
\newcommand{\rcc}{{\rm (c)}}

\newcommand{\ri}{{\rm (i)}}
\newcommand{\rii}{{\rm (ii)}}
\newcommand{\riii}{{\rm (iii)}}

\newcommand{\J}{\mathcal{J} }
\newcommand{\nc}{\newcommand}
\nc{\orange}{\textcolor{orange}}

\nc{\Uca}{\mathcal{U}}

\nc{\RS}{\sigma_{\operatorname{Ric}}} \nc{\REV}{\operatorname{RicEV}}

\nc{\pr}{\operatorname{pr}}

\nc{\Gv}{{\G_{(\vg_i)} } }   \nc{\ggov}{{\ggo_{(\vg_i)} }}   \nc{\Gvt}{{\G^t_{(\vg_i)} } } 
\nc{\ig}{\mathfrak{i}}
\nc{\graf}{\text{graph}}

\nc{\Gl}{\mathsf{GL}} \nc{\Or}{\mathsf{O}}  \nc{\SO}{\mathsf{SO}}   \nc{\Sl}{\mathsf{SL}}
\nc{\G}{\mathsf{G}} \nc{\K}{\mathsf{K}}  \nc{\T}{\mathsf{T}} \nc{\Lsf}{\mathsf{L}}
\nc{\Qb}{\mathsf{Q}_\Beta} \nc{\Hb}{\mathsf{H}_\Beta} \nc{\Ub}{\mathsf{U}_\Beta}
\nc{\Gb}{\mathsf{G}_\Beta} \nc{\Kb}{\mathsf{K}_\Beta}
\nc{\PPP}{\mathsf{P}} \nc{\U}{\mathsf{U}} \nc{\N}{\mathsf{N}} \nc{\Ss}{\mathsf{S}} \nc{\Aa}{\mathsf{A}}
\nc{\Hh}{\mathsf{H}}




\nc{\la}{\langle} \nc{\ra}{\rangle}

\nc{\laH}{\la\!\la} \nc{\raH}{\ra\!\ra}

\nc{\ipH}{{\laH \cdot, \cdot \raH}}

\nc{\iph}{{\la h \cdot , h \cdot \ra}}

\nc{\Vg}{{V(\ggo)}}

\nc{\alert}{\color{blue}}
\nc{\fg}{\mathfrak{f}}  \nc{\vg}{\mathfrak{v}} \nc{\wg}{\mathfrak{w}} \nc{\zg}{\mathfrak{z}} \nc{\ngo}{\mathfrak{n}} \nc{\kg}{\mathfrak{k}} \nc{\mg}{\mathfrak{m}} \nc{\bg}{\mathfrak{b}} \nc{\ggo}{\mathfrak{g}} \nc{\ggob}{\overline{\mathfrak{g}}} \nc{\sog}{\mathfrak{so}} \nc{\sug}{\mathfrak{su}} \nc{\spg}{\mathfrak{sp}} \nc{\slg}{\mathfrak{sl}} \nc{\glg}{\mathfrak{gl}} \nc{\cg}{\mathfrak{c}} \nc{\rg}{\mathfrak{r}}  \nc{\hg}{\mathfrak{h}} \nc{\tgo}{\mathfrak{t}} \nc{\ug}{\mathfrak{u}} \nc{\dg}{\mathfrak{d}} \nc{\ag}{\mathfrak{a}} \nc{\pg}{\mathfrak{p}} \nc{\sg}{\mathfrak{s}} \nc{\affg}{\mathfrak{aff}} \nc{\qg}{\mathfrak{q}} \nc{\ugo}{\mathfrak{u}}
\nc{\Xg}{\mathfrak{X}} \nc{\lgo}{\mathfrak{l}}

\nc{\pca}{\mathcal{P}} \nc{\nca}{\mathcal{N}} \nc{\lca}{\mathcal{L}} \nc{\oca}{\mathcal{O}} \nc{\mca}{\mathcal{M}} \nc{\tca}{\mathcal{T}} \nc{\aca}{\mathcal{A}} \nc{\cca}{\mathcal{C}} \nc{\gca}{\mathcal{G}} \nc{\sca}{\mathcal{S}} \nc{\hca}{\mathcal{H}} \nc{\bca}{\mathcal{B}} \nc{\dca}{\mathcal{D}}

\nc{\vp}{\varphi} \nc{\ddt}{\tfrac{{\rm d}}{{\rm d}t}} \nc{\dds}{\tfrac{{\rm d}}{{\rm d}s}} \nc{\ddtbig}{\frac{{\rm d}}{{\rm d}t}} \nc{\dd}{{\rm d}}
\nc{\dpar}{\tfrac{\partial}{\partial t}} \nc{\im}{\mathtt{i}}
\renewcommand{\Im}{{\rm Im}}

 \nc{\SU}{\mathsf{SU}} 

\nc{\RR}{{\mathbb R}} \nc{\HH}{{\mathbb H}} \nc{\CC}{{\mathbb C}} \nc{\ZZ}{{\mathbb Z}}
\nc{\FF}{{\mathbb F}} \nc{\NN}{{\mathbb N}} \nc{\QQ}{{\mathbb Q}} \nc{\PP}{{\mathbb P}}

\nc{\vs}{\vspace{.2cm}} \nc{\vsp}{\vspace{1cm}} \nc{\ip}{{\langle\cdot,\cdot\rangle}}
\nc{\ipp}{(\cdot,\cdot)} \nc{\unm}{\tfrac{1}{2}}
\nc{\unc}{\tfrac{1}{4}} \nc{\und}{\tfrac{1}{16}} \nc{\no}{\vs\noindent}
\nc{\lam}{\Lambda^2(\RR^n)^*\otimes\RR^n} \nc{\tangz}{{\rm T}^{\rm Zar}}
\nc{\lamg}{\Lambda^2\ggo^*\otimes\ggo}
\nc{\nor}{{\sf n}}  \nc{\mum}{/\!\!/} \nc{\kir}{/\!\!/\!\!/}
\nc{\Ri}{\tfrac{4\Ric_{\mu}}{||\mu||^2}} \nc{\ds}{\displaystyle}
\nc{\lb}{[\cdot,\cdot]} \nc{\isn}{\tfrac{1}{||v||^2}}
\nc{\gkp}{(\ggo=\kg\oplus\pg,\ip)} \nc{\ukh}{(\ug=\kg\oplus\hg,\ip)}
\nc{\tgkp}{(\tilde{\ggo}=\kg\oplus\pg,\ip)}
\nc{\wt}{\widetilde}
\nc{\raw}{\rightarrow} \nc{\lraw}{\longrightarrow} \nc{\hqn}{\mathcal{H}_{q,n}}

\nc{\Spec}{\operatorname{Spec}}

\nc{\ad}{\operatorname{ad}}  \nc{\Aut}{\operatorname{Aut}}   \nc{\Inn}{\operatorname{Inn}}   \nc{\Lie}{\operatorname{Lie}} \nc{\Ad}{\operatorname{Ad}} \nc{\Der}{\operatorname{Der}} \nc{\rad}{\operatorname{rad}} \nc{\kf}{\operatorname{B}}

\nc{\End}{\operatorname{End}} \nc{\rank}{\operatorname{rank}} \nc{\Ker}{\operatorname{Ker}} \nc{\tr}{\operatorname{tr}} \nc{\sym}{\operatorname{sym}} \nc{\diag}{\operatorname{diag}} \nc{\proy}{\operatorname{pr}} \nc{\Adj}{\operatorname{Adj}} \nc{\vspan}{\operatorname{span}}

\nc{\Hess}{\operatorname{Hess}}  \nc{\dif}{\operatorname{d}} \nc{\sen}{\operatorname{sen}} \nc{\grad}{\operatorname{grad}} \nc{\Order}{\operatorname{O}} \nc{\divg}{\operatorname{div}}

\nc{\Iso}{\operatorname{Iso}} \nc{\Diff}{\operatorname{Diff}} \nc{\Rc}{\operatorname{Rc}} \nc{\Ricci}{\operatorname{Ric}} \nc{\Riem}{\operatorname{Rm}} \nc{\scalar}{\operatorname{sc}} \nc{\scalarm}{\hat{\operatorname{R}}} \nc{\Riccim}{\widehat{\operatorname{Ric}}} \nc{\tang}{\operatorname{T}} \nc{\vol}{\operatorname{vol}}

\nc{\mm}{\operatorname{M}} \nc{\CH}{\operatorname{CH}} \nc{\Irr}{\operatorname{Irr}} \nc{\mcc}{\operatorname{mcc}} \nc{\m}{\operatorname{m}}

\nc{\Id}{\operatorname{Id}}  \nc{\mmm}{\operatorname{m}}
\nc{\id}{\operatorname{id}} 


\theoremstyle{plain}
\newtheorem{theorem}{Theorem}[section]
\newtheorem{proposition}[theorem]{Proposition}
\newtheorem{corollary}[theorem]{Corollary}
\newtheorem{lemma}[theorem]{Lemma}

\theoremstyle{definition}

\theoremstyle{plain}

\theoremstyle{remark}
\newtheorem{remark}[theorem]{Remark}

\newtheorem{example}[theorem]{Example}

\title[Complex and symplectic structures on almost abelian Lie groups]{Classification of  almost abelian Lie groups   admitting left-invariant  complex or symplectic~structures}

\author[R. M. Arroyo]{Romina M. Arroyo} \thanks{R. M. Arroyo, M. L. Barberis and Y. Godoy were  partially supported by CONICET and SECyT - UNC (Argentina). R. M. Arroyo was also partially supported by FONCYT (Argentina)} 
\address{CIEM - CONICET, FAMAF - Universidad Nacional de Córdoba, Ciudad Universitaria, 5000 Córdoba, Argentina}
\email{romina.arroyo@unc.edu.ar} \email{mlbarberis@unc.edu.ar}
\email{yamile.godoy@unc.edu.ar}
\author[M. L. Barberis]{María L. Barberis} 
\author[V. S. Diaz]{Verónica S. Diaz} 
\address{CEMIM - FCEyN - Universidad Nacional de Mar del Plata, Funes 3350, 7600 Mar del Plata, Argentina}
\email{veronicadiaz@mdp.edu.ar}
\thanks{V.S. Diaz was partially supported by ANPCyT (Argentina)}
\author[Y. Godoy]{Yamile Godoy} 
\author[I. Hernández]{Isabel Hern\'andez}
\address{SECIHTI - CIMAT Unidad M\'erida, M\'exico}
\email{isabel@cimat.mx}
\thanks{I. Hern\'andez was partially supported by grant CONAHCYT  A1-S-45886 (M\'exico)}

\thanks{The present project emerged at the ``Latin American and Caribbean Workshop on Mathematics and Gender” held at Casa Matemática Oaxaca (CMO),  May 15 - 20th, 2022.
}

\keywords{Almost abelian Lie algebra, complex structure, symplectic structure}
\subjclass[2020]{53D05, 32M10,  22E25, 53C30, 17B05, 17B30}

\begin{document}
\begin{abstract} {We classify the  almost abelian Lie algebras $\ggo_A=\RR e_0 \ltimes_A  \RR^{2n-1}$ admitting complex or symplectic structures. The matrix $A\in M(2n-1,\RR )$ encodes the adjoint action of $e_0$ on the abelian ideal $\RR^{2n-1}$, and the existence of complex or symplectic structures on $\ggo_A$ imposes restrictions on the Jordan  normal form of $A$. The classification essentially reduces to the case when $A$ is nilpotent, so we start by considering this case.  
It turns out that if $A$ is nilpotent and $\ggo_A$ admits a complex structure, then $\ggo_A$ necessarily admits a symplectic structure. This is not 
 true in  general when $A$ is non-nilpotent.  
 Finally, several consequences of the classification theorems are obtained.}
\end{abstract}

\maketitle

\section{Introduction}

Nilmanifolds, and more generally, solvmanifolds admitting geometric structures play an important role in differential geometry. They have provided examples and counterexamples to open questions and  conjectures and have been extensively studied by several authors (see \cite{AV, AB, AN, Baz,  BFLT, BGM, CG, CM, CT, ChH, dB0, FP1, GR, LZ, NW, OOS, U}, among many others). Most of the first known examples of compact symplectic manifolds which do not admit any K\"ahler structure are nilmanifolds (see, for  instance, \cite{CFG,Th}).  It is well known that the only nilmanifolds admitting K\"ahler structures are tori \cite[Theorem A]{BG}.

The study of left-invariant complex or symplectic structures on solvable Lie groups is an active field of research and several results are known in low dimensions (see for instance, \cite{BG, Ca, CM, CT, CG, COUV16, CFG, dB1, dB2, DT, FGG, FP1, GB, GJK, GZZ, LUV19, LUV23, OS, Ov, Ov2, Rol, Sal}). The classification  of  solvable  Lie groups admitting left-invariant complex or symplectic structures is an interesting problem (see for instance  \cite{Gua}). 
Although there are many contributions to the subject, it is considered a wild problem and it is far from being solved in general.

In this article we focus on a family of solvable Lie groups, namely, the  almost abelian ones.  A Lie group $G$ is called {almost abelian} if its Lie algebra $\ggo$ has a codimension-one abelian ideal $\hg$, that is,  given $e_0\notin \hg, $ $\ggo$ decomposes as  $\ggo=\RR e_0\ltimes \hg$. Such a Lie algebra will be also called almost abelian. Fix a basis $\mathcal B$ of $\hg$ and let  $A$ be the matrix of $\ad_{e_0}|_{\hg}$ with respect to~$\mathcal B$. 
If $\dim\hg =d$, then $\ggo$ is isomorphic to  $\ggo_A:=\RR e_0 \ltimes_A  \RR^d$, with    $[e_0,v]=Av$, where we consider $v\in \RR^d$ as a column vector.  Accordingly, the corresponding simply connected Lie group $G_A$ is a semidirect product 
$G_A=\RR\ltimes_\phi \RR^d$,    where  $\RR$ acts on $\RR^d$   by $\phi(t)=e^{t A}$, that is, group multiplication  in $G_A$ is given by:
\[ (t,v)(s,w)=(t+s, v+e^{t A}w), \qquad t,s \in \RR, \; v,w\in \RR ^d.
\]
The almost abelian Lie algebra  $\ggo_A$ is nilpotent if and only 
if $A$ is nilpotent. 

\medskip

 In order to prove the classification theorems we need to obtain the conjugacy classes of certain $(2n-1)\times (2n-1)$  matrices which characterize the almost abelian Lie algebras admitting complex or symplectic structures. 

We start by considering the nilpotent case (see Sections  \S\ref{sec:nilp-complex} and \S\ref{sec:nilp-symp}).   Isomorphism classes of $2n$-dimensional nilpotent almost abelian Lie algebras are parametrized by conjugacy classes of $(2n-1)\times (2n-1)$ nilpotent  matrices. These  are in  one-to-one correspondence   with partitions of $2n-1$, due to the Jordan normal form of nilpotent matrices. In our classification results, Theorems~\ref{classifcomplex} and \ref{classifsymp}, we determine which partitions of $2n-1$ correspond to $2n$-dimensional nilpotent almost abelian  Lie algebras admitting complex or symplectic structures.  As a consequence of Theorem  \ref{classifcomplex} and Theorem \ref{classifsymp}  we obtain Theorem \ref{complex-sympl}, which asserts  that if $\ggo_A$ admits a complex structure, then it necessarily admits a symplectic structure. This does not hold for arbitrary nilpotent Lie algebras (see Example \ref{ex_symp-no-cx}). 

The general case follows by applying suitably the results of the nilpotent case. 
The classification is carried out  by making use of the description of conjugacy classes of matrices in  $M_{J_m}(2m, \RR)$  or $\mathfrak{sp}(2m,\RR)$ (see Proposition \ref{solv-complex2n} and Theorem \ref{prop:symplex-nonzero}, respectively). The main results are Theorem \ref{complex_M(a)+Q} and Theorem\ref{sympM(c)+M(-c)+Q}, and from these we obtain the classification in the complex and symplectic case, respectively.

Several consequences of our main results are obtained throughout the paper.

\ 

{\it Acknowledgements.}  The authors are grateful to Leandro Cagliero for insightful discussions on nilpotent orbits in the symplectic Lie algebra, and to Jorge Lauret for suggesting to explore the validity of Theorem \ref{teo:nilpotent-part}. We owe special thanks to Marco Freibert for drawing our attention to reference \cite{BFLT} and for encouraging us to extend our results to the solvable case, which led to significant improvements in the article.

R. M. Arroyo would like to acknowledge support from the ICTP through the Associates Programme (2023-2028).

\section{Preliminaries}

Let $\ggo$ be a real Lie algebra.  A complex structure on $\ggo$ is an automorphism
$j$ of $\ggo$ satisfying $j^2=-\id$ and the 
integrability condition $N_{j}(x,y) =0$ for all $x,y$ in $\ggo$, where $N_j$ is the Nijenhuis tensor:
$$
N_{j}(x,y) =
[x,y]+j([jx,y]+[x,jy])-[jx,jy].  
$$
A symplectic structure on $\ggo$ is a non-degenerate $2$-form on $\ggo$ which is closed, that is,
$d\omega =0$, where 
\[ d\omega (x,y,z)= -\omega([x,y],z)-\omega([y,z],x)-\omega([z,x],y), \qquad\quad x,y,z \in \ggo .
\]
If $G$ is a connected Lie group with Lie algebra $\ggo$, any complex structure on $\ggo$ induces a left-invariant complex structure on $G$. In this way, $G$ becomes a complex manifold such that left translations are holomorphic diffeomorphisms. Similarly, a symplectic structure on $\ggo$ induces a left-invariant symplectic structure on $G$.

Recall that a Lie group $G$ is called {almost abelian} if its Lie algebra $\ggo$ has a codimension-one abelian ideal $\hg$, that is,  given $e_0\notin \hg, $ $\ggo$ decomposes as  $\ggo=\RR e_0\ltimes \hg$. Such a Lie algebra will be also called almost abelian. Fix a basis $\mathcal B$ of $\hg$ and let  $A$ be the matrix of $\ad_{e_0}|_{\hg}$ with respect to~$\mathcal B$. 
If $\dim\hg =d$, then $\ggo$ is isomorphic to  $\ggo_A:=\RR e_0 \ltimes_A  \RR^d$, with    $[e_0,v]=Av$, where we consider $v\in \RR^d$ as a column vector.   Note that the Jacobi identity is automatically satisfied.  
Accordingly, the corresponding simply connected Lie group $G_A$ is a semidirect product 
$G_A =\RR\ltimes_\phi \RR^d$,    where  $\RR$ acts on $\RR^d$   by $\phi(t)=e^{t A}$, that is, group multiplication  in $G_A$ is given by:
\[ (t,v)(s,w)=(t+s, v+e^{t A}w), \qquad t,s \in \RR, \; v,w\in \RR ^d.
\]
The almost abelian Lie algebra  $\ggo_A$ is nilpotent if  and only if $A$ is nilpotent.

Regarding the isomorphism classes of almost abelian Lie algebras, we have the following result, due to Freibert \cite{Fr}.
\begin{proposition}
 \cite[Proposition 1]{Fr}\label{ad-conjugated}
Given $A, B\in M(d,\RR)$, the almost abelian Lie algebras $\ggo_A$ and $\ggo_B$ are isomorphic if and only if there exists $c\neq 0$ such that $B$ and $cA$ are conjugate. 
\end{proposition}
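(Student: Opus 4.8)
The plan is to prove both implications by translating a Lie algebra isomorphism into the three elementary moves available on the defining data $(\RR^d, A)$: changing the basis of the abelian ideal $\RR^d$, which conjugates $A$, and replacing the transverse generator $e_0$ by $ce_0+w$ with $c\neq 0$ and $w\in\RR^d$, which rescales $A$ to $cA$ since $\ad_{ce_0+w}|_{\RR^d}=cA$. The sufficiency direction is then immediate: writing $\ggo_A=\RR e_0\ltimes_A\RR^d$ and $\ggo_B=\RR e_0'\ltimes_B\RR^d$, and assuming $B=c\,PAP^{-1}$ with $c\neq 0$ and $P\in\Gl(d,\RR)$, I would define $\phi(e_0)=c^{-1}e_0'$ and $\phi|_{\RR^d}=P$, and check directly that $\phi([e_0,v])=PAv=c^{-1}B(Pv)=[\phi(e_0),\phi(v)]$, while brackets within $\RR^d$ vanish on both sides; hence $\phi$ is an isomorphism.

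For the necessity direction, let $\phi\colon\ggo_A\to\ggo_B$ be an isomorphism and write it in block form relative to $\ggo_A=\RR e_0\oplus\RR^d$ and $\ggo_B=\RR e_0'\oplus\RR^d$: set $\phi(e_0)=\lambda e_0'+u$ and $\phi(v)=\alpha(v)e_0'+Sv$ for $v\in\RR^d$, where $\lambda\in\RR$, $u\in\RR^d$, $\alpha\in(\RR^d)^*$ and $S\colon\RR^d\to\RR^d$ is linear. Expanding $\phi([e_0,v])=[\phi(e_0),\phi(v)]$ and $\phi([v,w])=[\phi(v),\phi(w)]$ yields $\alpha\circ A=0$, the relation $SA=\lambda BS-(Bu)\,\alpha$, and $\alpha(v)\,BSw=\alpha(w)\,BSv$ for all $v,w$. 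When $\alpha=0$, bijectivity of $\phi$ forces $\lambda\neq 0$ and $S\in\Gl(d,\RR)$, so the second relation collapses to $SA=\lambda BS$, i.e. $B=\lambda^{-1}SAS^{-1}$; this is exactly the asserted conjugacy with $c=\lambda^{-1}$.

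The only obstacle is therefore the case $\alpha\neq 0$, which occurs precisely when $\phi$ fails to send the abelian ideal $\RR^d\subset\ggo_A$ into $\RR^d\subset\ggo_B$; this is possible because a codimension-one abelian ideal of an almost abelian Lie algebra need not be unique. I would remove this obstacle with the key observation that the ideal \emph{is} unique as soon as $\rank A\geq 2$: if $\hg'$ were a second codimension-one abelian ideal of $\ggo_A$, then for $x\in\hg'\setminus\RR^d$, so $x=te_0+v$ with $t\neq 0$, and any $y\in\hg'\cap\RR^d$ one has $0=[x,y]=t\,Ay$, whence $\hg'\cap\RR^d\subseteq\ker A$; since $\dim(\hg'\cap\RR^d)=d-1$, this forces $\rank A\leq 1$. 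Because an isomorphism preserves the derived algebra $[\ggo_A,\ggo_A]=\Im A$, we have $\rank A=\rank B$, so whenever $\rank A\geq 2$ both ideals are unique, $\phi$ must respect them, and we are back in the case $\alpha=0$.

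The residual possibilities $\rank A\leq 1$ are then settled by inspection. If $A=0$ the algebra is abelian, and any isomorphic $\ggo_B$ is abelian, forcing $B=0=c\,A$; if $\rank A=1$, then $A$ is conjugate to a single nonzero Jordan block of rank one, which is either nilpotent or has one nonzero eigenvalue, and using that $\ggo_A$ is nilpotent if and only if $A$ is nilpotent one matches the type of $A$ with that of $B$ and absorbs the nonzero eigenvalue into the free scaling $c$. I expect this rank-one bookkeeping to be the only delicate part, the entire conceptual content being contained in the uniqueness lemma above.
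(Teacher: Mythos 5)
Your proposal is correct, but note that the paper itself contains no proof of this statement: it is quoted verbatim from Freibert \cite[Proposition 1]{Fr}, so there is no in-paper argument to compare against. What you have produced is a self-contained proof, and it handles the one genuine subtlety correctly. The sufficiency direction and the block-decomposition $\phi(e_0)=\lambda e_0'+u$, $\phi(v)=\alpha(v)e_0'+Sv$ are routine; the real content is that $\phi$ need not a priori carry the abelian ideal $\RR^d\subset\ggo_A$ into $\RR^d\subset\ggo_B$ (i.e.\ $\alpha$ need not vanish), since a codimension-one abelian ideal is not always unique. Your uniqueness lemma (any second codimension-one abelian subalgebra forces $\rank A\leq 1$), combined with the observation that $[\ggo_A,\ggo_A]=\Im A$ makes $\rank A$ an isomorphism invariant, reduces everything to $\rank A\leq 1$; the residual cases are correctly dispatched because a rank-one matrix is conjugate either to $\J_2\oplus 0_{d-2}$ or to $\lambda\oplus 0_{d-1}$ with $\lambda\neq 0$, and nilpotency of $\ggo_A$ matches nilpotency of $A$, so the surviving eigenvalue is absorbed into $c$. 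All three bracket relations you derive are correct (you in fact never need the relation $\alpha(v)BSv'=\alpha(v')BSv$, which is harmless). This is essentially the same strategy one finds in Freibert's original paper, so your argument can stand as a legitimate replacement for the citation.
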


\begin{remark}\label{rem:nilp-similiar}
It follows that two nilpotent almost abelian  Lie algebras $\ggo_A$ and $\ggo_B$ are isomorphic if and only if $A$ and $B$ are conjugate, since for   any   nilpotent matrix $N$, $cN$ and $N$ are conjugate  for $c\neq 0$. Therefore, isomorphism classes of $(d+1)$-dimensional nilpotent almost abelian Lie algebras are parametrized by conjugacy classes of $d\times d$ nilpotent  matrices. 
\end{remark}

\section{Useful linear algebra results}

Given two matrices $M_i\in M(d_i, \mathbb R)$, for $i=1,2$, we denote by $M_1\oplus M_2$ the matrix $\begin{pmatrix}
    M_1&0 \\ 0&M_2
\end{pmatrix}\in M(d_1 + d_2, \mathbb R)$ and we use a similar notation for 3 or more matrices. In particular, given a matrix $M$, we write $M^{\oplus p}$ for the matrix $M\oplus \cdots \oplus M$ ($p$ copies). 
Finally, $0_s$ denotes the null $s\times s$ matrix and  
$I_t$ denotes the identity matrix of size $t\times t$.

Let $C\in M(r,\RR)$, we denote by $\text{spec}\,(C)$  the set of distinct eigenvalues of $C$ and by $\text{spec}_\RR(C)$ 
the set of distinct real eigenvalues of $C$. For $a\in \RR$ and $\alpha \in \CC$ with  Im$(\alpha)>0$, we consider the following sets of matrices:
\begin{eqnarray*}
    M_a(r,\RR)&:=&\{C\in M(r,\RR) : \, \text{spec}\, (C)= \{a\}\}, \\ 
    M_\alpha ( r,\RR)&:=&\{C\in M(r,\RR) : \, \text{spec}\, (C)= \{\alpha , \overline{\alpha}\}\}.
\end{eqnarray*}
Note that when $\alpha \in \CC$ with Im$(\alpha)>0$, $r$ is necessarily even.

Given $\lambda\in \CC$ and $n\geq 2$, $\J_{n}(\lambda)$ denotes the Jordan block of size $n\times n$ :
\begin{equation*}
\J_{n}(\lambda)= \begin{pmatrix}
  \lambda & 0 & \cdots & \cdots & 0\\
  1 & \lambda &  &  &  0 \\
  0 & \ddots & \ddots &  & \vdots \\
  \vdots & \ddots & \ddots & \ddots &  \vdots \\
  0 & \cdots & 0 & 1 &\lambda
\end{pmatrix}.
\end{equation*} When $\lambda=0$, we simply denote $\J_n$ instead of $\J_n(0)$.

For each  $m\in\NN$, we denote by $\bigwedge_m$ the following set of tuples of integers: 
\begin{equation*}    \begin{split}
    {\bigwedge}_m:&=\{ (n_1,\ldots , n_k; p_1, \ldots , p_k; t ): \sum_i n_ip_i +t= m,  \\   & \qquad 
k\geq 1, \;  n_1> \dots > n_k\geq 2, \;  p_i > 0 \; \forall i, \; t\geq 0  \}\cup\{(m)\}.
 \end{split}\end{equation*}

\vspace{0.5cm}

 Let $A\in M_b(m,\RR)$,  $b\in \RR$. Then $A$ is conjugate to either 
\begin{equation}\label{real eigenvalue}
\begin{cases} \J_{n_1}(b)^{\oplus p_1}\oplus \dots \oplus \J_{n_k}(b)^{\oplus p_k} \oplus bI_t,   \quad (n_1,\ldots , n_k; p_1, \ldots , p_k; t )\in {\bigwedge}_m, \\
\text{or}\\
bI_m.
\end{cases}
 \end{equation} 
 We associate to $A$ the following tuple $\Lambda(A)$:
\begin{equation}\label{Lambda}
\Lambda(A)=(n_1,\ldots , n_k; p_1, \ldots , p_k; t ), \quad \text{ or } \quad  \Lambda(A)=(m), \text{ respectively.}
\end{equation}  

Given $b,d\in \RR$ where $d> 0$, let 
\begin{equation}\label{eq:complex-matrix}
    \mathcal{C}(b,d)=\begin{pmatrix}
b & d \\
-d & b
\end{pmatrix}.
\end{equation}
For $n\geq 2$ we define 
\begin{equation}\label{eq:matriz-inflada}
\mathcal{C}_n (b,d)=\begin{pmatrix}
  \mathcal{C}(b,d) &  &  &  & \\
  I_2 & \mathcal{C}(b,d) &  &  &   \\
   & I_2 & \ddots &  &  \\
   &  & \ddots & \ddots &   \\
   &  &  & I_2 &\mathcal{C}(b,d)
\end{pmatrix} \in M (2n,\RR) .      
\end{equation}
Let $\alpha=b+id$  with $d>0$ and  let $A\in M_\alpha(2m,\RR )$. Then $A$ is conjugate to  either 
\begin{equation}\label{complex eigenvalue}
     \begin{cases}\mathcal{C}_{n_1}(b,d)^{\oplus p_1}\oplus \dots \oplus \mathcal{C}_{n_k}(b,d)^{\oplus p_k} \oplus \mathcal{C}(b,d)^{\oplus t},   \quad (n_1,\ldots , n_k; p_1, \ldots , p_k; t )\in {\bigwedge}_m, \\
     \text{or}\\
     \mathcal{C}(b,d)^{\oplus m}.
    \end{cases}
\end{equation}
We associate to $A$ the following tuple $\Lambda(A)$: 
\begin{equation}
\Lambda(A)=(n_1,\ldots , n_k; p_1, \ldots , p_k; t ), \quad \text{ or } \quad  \Lambda(A)=(m), \text{ respectively.}
\end{equation}

Let $C\in M(r, \RR)$ such that spec$\, (C)=\{\lambda_1 , \ldots , \lambda _{\ell}, \lambda _{\ell +1}, \overline{\lambda} _{\ell +1}, \ldots, \lambda _s, \overline{\lambda }_s\}$, where $\lambda_j\in \RR$, for $j=1,\ldots,\ell$, and $\lambda_j \in \CC$  with $\text{Im}(\lambda_j)>0$, for $j=\ell +1,\ldots,s$. 

Then $C$ is conjugate to  
\begin{equation}\label{Cdecomposition}
C(\lambda_1)\oplus \cdots \oplus C(\lambda_{s}).     
\end{equation}
where $C(\lambda_j)\in M_{\lambda_j}(m_j,\RR)$, $m_1 + \cdots + m_s = r$ and
\begin{itemize}
\item if $j=1,\ldots,\ell$, then $C(\lambda_j)$ is one of the matrices given by \eqref{real eigenvalue}  with $b=\lambda_j$ and $m=m_j$,  
\item if $j=\ell+1,\ldots,s$, the matrix $C(\lambda_j)$ is one of the matrices given by \eqref{complex eigenvalue} with $\lambda_j=b_j+id_j$ and $2m=m_j$.\end{itemize}  
Through the paper,  given a real square  matrix $M$ and $\lambda\in \text{spec}_\RR (M)$, we denote $m_\lambda$ the positive integer satisfying 
 $M(\lambda)\in M(m_\lambda , \RR)$.

\smallskip

The next lemma is a straightforward consequence of \eqref{Cdecomposition}.

\begin{lemma}\label{M(a)+Q}
Given $C\in M(m, \RR)$ and $b \in \text{spec}_\RR(C)$, then $C$ is conjugate to either $C(b)$ or $C(b)\oplus Q$, where 
$Q\in M(m-m_b, \RR )$ and $b\notin \text{spec}\, (Q)$, depending on whether  $m_b=m$ or $m_b<m$.    
\end{lemma}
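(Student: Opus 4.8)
The plan is to read the conclusion directly off the normal form \eqref{Cdecomposition}. First I would invoke that decomposition: $C$ is conjugate to $C(\lambda_1)\oplus\cdots\oplus C(\lambda_s)$, where the $\lambda_j$ range over the distinct real eigenvalues $\lambda_1,\dots,\lambda_\ell$ and the distinct nonreal eigenvalues $\lambda_{\ell+1},\dots,\lambda_s$ with positive imaginary part, and each summand $C(\lambda_j)$ has spectrum exactly $\{\lambda_j\}$ for $j\leq\ell$ and $\{\lambda_j,\overline{\lambda_j}\}$ for $j>\ell$. Since $b\in\text{spec}_\RR(C)$, some index $j_0\in\{1,\dots,\ell\}$ satisfies $\lambda_{j_0}=b$, and the corresponding block is precisely $C(b)\in M(m_b,\RR)$ by the definition of $m_b$.

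Next I would separate this block from the others. If $m_b=m$, then the direct sum reduces to the single summand $C(b)$ and the first alternative holds with nothing further to prove. Otherwise $m_b<m$, so at least one further index is present; applying a permutation matrix (which acts by conjugation and hence leaves the conjugacy class unchanged) I would reorder the summands to place $C(b)$ first, exhibiting $C$ as conjugate to $C(b)\oplus Q$ with $Q=\bigoplus_{j\neq j_0}C(\lambda_j)$.

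Finally I would verify the two asserted properties of $Q$. Its size is $m-m_b$, obtained by summing the dimensions of the remaining blocks. For the spectrum, the distinct eigenvalues $\lambda_j$ are pairwise different, so no block $C(\lambda_j)$ with $j\neq j_0$ contains $b$ in its spectrum; since the spectrum of a block-diagonal matrix is the union of the spectra of its blocks, $b\notin\text{spec}(Q)$.

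There is no genuine obstacle here, as the statement is merely a bookkeeping rearrangement of \eqref{Cdecomposition}; the only point deserving a word of care is that permuting the diagonal blocks is realized by conjugation by a permutation matrix, so that passing to the ordered form $C(b)\oplus Q$ preserves the conjugacy class.
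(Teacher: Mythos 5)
Your proof is correct and is precisely the argument the paper has in mind: the paper states the lemma as ``a straightforward consequence of \eqref{Cdecomposition}'' without writing out details, and your write-up simply fills in those details (isolating the block $C(b)$, reordering blocks by conjugation, and noting that the remaining blocks have spectra avoiding $b$ since the $\lambda_j$ are distinct). Nothing is missing, and no genuinely different route is taken.
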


\begin{remark}\label{L(A)=L(N)}
Observe that if $b\in \RR$ and $C\in M_b(m,\RR )$,   then $\Lambda(C)=\Lambda(C-bI_m)$, since $C(b)=(C-bI_m)(0)+bI_m$.     
\end{remark}

Given $b\in \RR$, we denote by $M_b (m,\RR)/GL(m,\RR)$  the set of conjugacy classes of matrices in $M_b (m,\RR)$. 
It follows from 
 uniqueness of the Jordan normal form up to reordering that 
 tuples corresponding to conjugate matrices in $M_b (m,\RR)$ are equal. Therefore, $\Lambda$ as in \eqref{Lambda} induces  a well defined map   $\tilde{\Lambda} $: 
\begin{equation}\label{eq:bijection-tuples}    
\tilde{\Lambda} : M_b(m,\RR)/GL(m,\RR)\longrightarrow  {\bigwedge}_m, \qquad \quad [A]\mapsto \Lambda(A),\end{equation}
where  $[A]$ denotes the conjugacy class of $A\in M_b(m,\RR)$. The  map $\tilde{\Lambda}$  is clearly a bijection between $M_b(m,\RR)/GL(m,\RR)$ and $  {\bigwedge}_m$.

In view of Proposition \ref{ad-conjugated},  in order to study isomorphism classes of almost abelian Lie algebras admitting complex or symplectic structures, we need to determine the conjugacy classes of matrices of the form \eqref{eq:complex-abelian} and \eqref{eq:symp-matrix}. We will consider separately the nilpotent and non-nilpotent case.

\subsection{The nilpotent case}
In view of Remark \ref{rem:nilp-similiar}, we state and prove the following lemma. 

\begin{lemma}\label{lem:isom-nilpot}Let $m \in \NN$. 
 The isomorphism classes of $(m+1)$-dimensional nilpotent almost abelian  Lie algebras are parametrized by the set $\bigwedge_m$.   
\end{lemma}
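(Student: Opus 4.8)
The plan is to compose two bijections that the excerpt has already set up: first I would reduce the classification of these Lie algebras to a purely linear-algebraic statement about nilpotent matrices, and then I would invoke the bijection $\tilde{\Lambda}$ from \eqref{eq:bijection-tuples} specialized to $b=0$.

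First, I would set $d=m$ in Remark \ref{rem:nilp-similiar}. Since $\ggo_A$ is nilpotent if and only if $A$ is nilpotent, that remark already asserts that two nilpotent almost abelian Lie algebras $\ggo_A$ and $\ggo_B$ are isomorphic if and only if $A$ and $B$ are conjugate, and hence that the isomorphism classes of $(m+1)$-dimensional nilpotent almost abelian Lie algebras are parametrized by the conjugacy classes of $m\times m$ nilpotent real matrices. Thus it suffices to produce a bijection between the set of conjugacy classes of $m\times m$ nilpotent real matrices and $\bigwedge_m$.

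Second, I would observe that a matrix $A\in M(m,\RR)$ is nilpotent precisely when $\text{spec}\,(A)=\{0\}$, that is, $A\in M_0(m,\RR)$ in the notation introduced above. Consequently the set of conjugacy classes of $m\times m$ nilpotent matrices is exactly $M_0(m,\RR)/GL(m,\RR)$. Specializing \eqref{eq:bijection-tuples} to $b=0$ then gives a bijection $\tilde{\Lambda}:M_0(m,\RR)/GL(m,\RR)\to\bigwedge_m$; here the tuple $(n_1,\ldots,n_k;p_1,\ldots,p_k;t)$ records the Jordan structure of $A$ via \eqref{real eigenvalue}, the summands $\J_{n_i}^{\oplus p_i}$ accounting for the nilpotent Jordan blocks of size $\geq 2$ and $t$ for the trivial blocks, with the distinguished tuple $(m)$ corresponding to the zero matrix $0_m$. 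Composing this bijection with the parametrization from the previous step yields the desired identification of isomorphism classes with $\bigwedge_m$.

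The argument is essentially bookkeeping, so I do not expect a genuine obstacle: the only point requiring care is that $\tilde{\Lambda}$ be well defined and bijective, and this is already recorded in the excerpt as a consequence of the uniqueness of the Jordan normal form up to reordering of blocks. The single invariant one must track is the partition of $m$ given by the Jordan block sizes, and the definition of $\bigwedge_m$ is precisely an encoding of such partitions (distinct block sizes $n_1>\cdots>n_k\geq 2$ with multiplicities $p_i$, together with the number $t$ of size-one blocks, and the special element $(m)$ for the fully trivial case), which is exactly why the correspondence is a bijection.
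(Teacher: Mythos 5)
Your proposal is correct and follows essentially the same route as the paper: the paper's proof also invokes Remark \ref{rem:nilp-similiar} to identify isomorphism classes with conjugacy classes of nilpotent $m\times m$ matrices, and then applies the bijection $\tilde{\Lambda}$ of \eqref{eq:bijection-tuples} with $b=0$. Your additional remarks (identifying nilpotent matrices with $M_0(m,\RR)$ and tracking the Jordan partition) simply make explicit what the paper leaves implicit.
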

\begin{proof}
According to Remark \ref{rem:nilp-similiar}, the isomorphism classes of $(m+1)$-dimensional nilpotent almost abelian Lie algebras are in one-to-one correspondence with the set of
conjugacy classes of nilpotent $m\times m$  matrices. This set  is parametrized by   $\bigwedge_m$ via the bijection $\tilde{\Lambda}$ defined in \eqref{eq:bijection-tuples} for $b=0$, and the lemma follows.
\end{proof}

In the following results we study the Jordan normal form of nilpotent matrices of the form $\begin{pmatrix}
      0   & 0 \\
      v   & B
     \end{pmatrix}\in M(m+1, \RR)$, where $v\in \RR^m$ and $B\in  M(m, \RR)$.

\begin{proposition}\label{(v,B)}
    Let $v \in \mathbb R ^m$ and let     $C= \begin{pmatrix}
      0   & 0 \\
      v   & B
     \end{pmatrix}\in M(m+1, \RR)$, where 
     \[ B=\J _{m_1}  \oplus \cdots \oplus \J _{m_r} \oplus  0_s,\quad \text{ with }\; m_1 \geq \cdots \geq m_r \geq 2, \; s\geq 0. \]
     Then $C$ is conjugate to one of the following matrices:
    \begin{enumerate}
        \item[$\raa$] $0_1 \oplus B$,
        \item[$\rbb$] $\J _{m_1}  \oplus \cdots \oplus \J_{m _{d-1}} \oplus \J_{m_{d} + 1}\oplus \J_{m_{d +1}} \oplus \cdots \oplus \J _{m_r} \oplus  0_s$, for $d \in \{ 1, \dots, r\}$,
        \item[$\rcc$] $\J _{m_1}  \oplus \cdots \oplus \J _{m_r} \oplus \J _{2}\oplus  0_{s-1}$, if $s>0$.
    \end{enumerate}

    \smallskip

    If $B=0_m$ and $v\neq 0$, then $C$ is conjugate to $\J _{2}\oplus  0_{m-1}$.
\end{proposition}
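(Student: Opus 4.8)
The plan is to determine the Jordan type of $C$ directly, since $C$ is nilpotent (it is block lower–triangular with nilpotent diagonal blocks $0_1$ and $B$), and the conjugacy class of a nilpotent matrix is completely recorded by the partition of $m+1$ given by its Jordan block sizes. I would recover this partition from the ranks of the powers $C^k$, using that the number of Jordan blocks of $C$ of size $\ge k$ equals $\rank(C^{k-1})-\rank(C^k)$, and compare it with the analogous data for $B$.

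First I would record the block identity
\begin{equation*}
C^k=\begin{pmatrix} 0 & 0 \\ B^{k-1}v & B^k \end{pmatrix}, \qquad k\ge 1,
\end{equation*}
proved by an immediate induction. Reading off the column space, one sees that $\rank(C^k)=\rank(B^k)+\delta_k$, where $\delta_k=1$ if $B^{k-1}v\notin\operatorname{Im}(B^k)$ and $\delta_k=0$ otherwise (and $\delta_0=1$, from $C^0=I_{m+1}$). The key structural observation is that the sequence $(\delta_k)$ is non-increasing: if $B^{k-1}v=B^k u$ then applying $B$ gives $B^kv=B^{k+1}u$, so $\delta_k=0$ forces $\delta_{k+1}=0$. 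Hence $(\delta_k)$ drops from $1$ to $0$ exactly once, say at index $q+1$ with $q\ge 0$. Comparing block counts, the number of Jordan blocks of $C$ of size $\ge k$ exceeds that of $B$ by $\delta_{k-1}-\delta_k$, which is $1$ precisely when $k=q+1$ and $0$ otherwise. Thus the Young diagram of $C$ (with rows equal to block sizes, so that the height of column $k$ is the number of blocks of size $\ge k$) is obtained from that of $B$ by adding a single box in column $q+1$.

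It then remains to translate this into the three cases, the diagram of $B=\J_{m_1}\oplus\cdots\oplus\J_{m_r}\oplus 0_s$ being the partition $(m_1,\dots,m_r,1^s)$. I would first dispose of case $\raa$: the condition $q=0$ is equivalent to $v\in\operatorname{Im}(B)$, and then writing $v=Bu$ and replacing the first basis vector $e_0$ by $e_0-u$ conjugates $C$ to $0_1\oplus B$ explicitly. When $v\notin\operatorname{Im}(B)$ we have $q\ge 1$, so the box is added in a column $c=q+1\ge 2$; since $C$ is a genuine matrix its Jordan type is a valid partition, so the box sits at an addable corner, which forces the extended row to have had length exactly $c-1$, i.e.\ $c-1$ is one of the part sizes of $B$. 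Enumerating the addable corners then yields exactly: extending a block $\J_{m_d}$ to $\J_{m_d+1}$ (case $\rbb$), or, when $s>0$, extending a size-one block to $\J_2$ (case $\rcc$). The closing special case $B=0_m$, $v\ne 0$ is the instance $r=0$, $s=m$: here $v\notin\operatorname{Im}(B)=\{0\}$, one computes $q=1$, and the recipe produces the partition $(2,1^{m-1})$, that is, $\J_2\oplus 0_{m-1}$.

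The main obstacle is the combinatorial translation in the last paragraph: one must argue cleanly that ``adding $1$ to the count of blocks of size $\ge q+1$'' corresponds to adding a box at an addable corner, and that the complete list of addable corners of $(m_1,\dots,m_r,1^s)$ is precisely the three families $\raa$–$\rcc$, so that every $v$ lands in one of them and no other partition can occur. Care is also needed to confirm that the monotonicity of $(\delta_k)$ together with $\delta_0=1$ really forces a single drop, which is exactly what guarantees that $C$ and $B$ differ by one box.
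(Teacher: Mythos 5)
Your argument is correct, and it takes a genuinely different route from the paper's. The paper proves Proposition \ref{(v,B)} by first normalizing $v$ modulo $\Im\,B$ via Lemma \ref{v+w}, splitting into three cases according to the reduced form of $v$, guessing the target Jordan matrix $A$ in each case, and then checking $\dim(\Ker\,C^n)=\dim(\Ker\,A^n)$ for all $n$ through a fairly lengthy case-by-case count of kernel dimensions (with subcases $n-1\geq m_d$, $n-1<m_d$, and so on). You instead establish one structural fact up front: writing $\rank(C^k)=\rank(B^k)+\delta_k$ with $\delta_k=1$ exactly when $B^{k-1}v\notin\Im\,B^k$, the sequence $(\delta_k)$ is a non-increasing $0$--$1$ sequence (your implication $B^{k-1}v=B^ku\Rightarrow B^kv=B^{k+1}u$ is right, and nilpotency of $B$ gives $\delta_k=0$ for large $k$, so the drop exists and is unique), whence the Jordan partition of $C$ is that of $B$ with a single box added in the column indexed by the drop. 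The cases \raa--\rcc{} then fall out of the purely combinatorial enumeration of addable corners of $(m_1,\dots,m_r,1^s)$, and your corner argument is sound: validity of the partition of $C$ forces, for $q\geq 1$, a part of $B$ of size exactly $q$, which can only be some $m_d$ (case \rbb) or $1$ with $s>0$ (case \rcc), while $q=0$ is precisely $v\in\Im\,B$ (case \raa), where your explicit conjugation sending $e_0$ to $e_0-u$ also checks out. As for what each approach buys: yours is shorter and more conceptual, subsumes Lemma \ref{v+w} for free (if $w=Bx$ then $B^{k-1}w=B^kx\in\Im\,B^k$, so $v$ and $v+w$ have the same $\delta$-sequence), and makes transparent why exactly one block grows; the paper's computation is more elementary, requires no Young-diagram dictionary, and in addition identifies which block grows directly from the coordinates of $v$ (namely $d=\min\{i:v_i\neq 0\}$ after reduction mod $\Im\,B$), information that your proof carries only implicitly through the drop index $q$.
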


For the proof of the above proposition we need the next lemma.

\begin{lemma}\label{v+w}
   If $B\in M(m, \mathbb R)$ is nilpotent and $w\in \Im\, B$, then 
  $ \begin{pmatrix}
      0   & 0 \\
      v   & B
    \end{pmatrix}\in M(m+1, \mathbb R)$ is conjugate to $\begin{pmatrix}
      0   & 0 \\
      v+w   & B
    \end{pmatrix}$, for all $v \in \mathbb R ^m$.
\end{lemma}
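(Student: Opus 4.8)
The plan is to exhibit an explicit conjugating matrix that absorbs the added vector $w$ while preserving both the zero first column structure and the block $B$. Concretely, since $w\in\Im B$, I can write $w=Bu$ for some $u\in\RR^m$. I would then look for a change of basis of $\RR^{m+1}$ that fixes the first coordinate (so that the top-left $0$ and the top row of zeros are untouched) and acts on the $\RR^m$ block in a way that commutes with $B$ up to the desired shift of $v$. The natural candidate is the unipotent matrix
\begin{equation*}
P=\begin{pmatrix} 1 & 0 \\ u & I_m \end{pmatrix}\in M(m+1,\RR),
\end{equation*}
whose inverse is $P^{-1}=\begin{pmatrix} 1 & 0 \\ -u & I_m \end{pmatrix}$, since the off-diagonal block enters linearly.

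The heart of the argument is then a direct block computation of $P\,C\,P^{-1}$ where $C=\begin{pmatrix} 0 & 0 \\ v & B \end{pmatrix}$. First I would compute
\begin{equation*}
CP^{-1}=\begin{pmatrix} 0 & 0 \\ v & B \end{pmatrix}\begin{pmatrix} 1 & 0 \\ -u & I_m \end{pmatrix}=\begin{pmatrix} 0 & 0 \\ v-Bu & B \end{pmatrix},
\end{equation*}
and then multiply on the left by $P$:
\begin{equation*}
P\,C\,P^{-1}=\begin{pmatrix} 1 & 0 \\ u & I_m \end{pmatrix}\begin{pmatrix} 0 & 0 \\ v-Bu & B \end{pmatrix}=\begin{pmatrix} 0 & 0 \\ v-Bu+u\cdot 0 & B \end{pmatrix}=\begin{pmatrix} 0 & 0 \\ v-Bu & B \end{pmatrix}.
\end{equation*}
This shows $C$ is conjugate to $\begin{pmatrix} 0 & 0 \\ v-Bu & B \end{pmatrix}$ for every $u$. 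Taking $u$ with $Bu=-w$ (possible precisely because $-w\in\Im B$, equivalently $w\in\Im B$) then yields conjugacy to $\begin{pmatrix} 0 & 0 \\ v+w & B \end{pmatrix}$, which is exactly the claim. Note the top-left block and the top row remain zero throughout, and the bottom-right block remains $B$, so the special form required in Proposition \ref{(v,B)} is preserved.

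The only subtle point to verify carefully is that the left-multiplication by $P$ genuinely leaves the bottom-left entry unchanged: the term $u$ times the top row of $CP^{-1}$ contributes $u\cdot 0=0$ because that top row is zero, which is why the zero first row of $C$ is essential. I do not anticipate a genuine obstacle here; the lemma is a clean computation, and the main thing is to present the two block multiplications cleanly and to state explicitly that $w\in\Im B$ is exactly the condition needed to solve $Bu=-w$ (or $Bu=w$, adjusting signs) for $u$. One could alternatively phrase the proof intrinsically: conjugation by $P$ corresponds to the basis change $e_0\mapsto e_0+u$ on the first vector, and since $B e_0$-component vanishes, the effect is precisely to translate $v$ by an element of $\Im B$.
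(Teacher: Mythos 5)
Your proof is correct, and it takes a genuinely different route from the paper. The paper does not exhibit a conjugating matrix at all: it uses the criterion that two \emph{nilpotent} matrices are conjugate if and only if $\dim(\Ker A^n)=\dim(\Ker C^n)$ for all $n$, computes $A^n=\begin{pmatrix} 0 & 0 \\ B^{n-1}v & B^n \end{pmatrix}$ and $C^n=\begin{pmatrix} 0 & 0 \\ B^{n-1}v+B^nx & B^n \end{pmatrix}$, and checks that the ranks agree power by power (the key observation being that $B^{n-1}v\in\Im B^n$ if and only if $B^{n-1}v+B^nx\in\Im B^n$). Your argument instead produces the explicit unipotent conjugator $P=\begin{pmatrix} 1 & 0 \\ u & I_m \end{pmatrix}$ with $Bu=-w$, and the block computation you give is accurate: $PCP^{-1}=\begin{pmatrix} 0 & 0 \\ v-Bu & B \end{pmatrix}=\begin{pmatrix} 0 & 0 \\ v+w & B \end{pmatrix}$. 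Two things your approach buys: it is constructive, and it never uses nilpotency of $B$, so you have in fact proved the stronger statement that the lemma holds for arbitrary $B\in M(m,\RR)$ (the paper's method genuinely needs nilpotency, since the kernel-dimension criterion fails for general matrices). What the paper's method buys is thematic consistency: the same kernel-counting technique is the engine of the proof of Proposition \ref{(v,B)} that follows, so the paper's proof of Lemma \ref{v+w} doubles as a warm-up for that longer argument, whereas your conjugator is special to this particular statement.
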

\begin{proof}
    We consider 
    $$
    A=\begin{pmatrix}
     0   & 0 \\
      v   & B
    \end{pmatrix} \quad \text{and}  \quad C=\begin{pmatrix}
      0   & 0 \\  
    v +w  & B
    \end{pmatrix}.
    $$    
    Since $B$ is nilpotent, then $A$ and $C$ are nilpotent. Therefore, 
    $A$ and $C$ are conjugate if and only if  $\dim( \Ker\, A^n)= \dim( \Ker\, C^n)$, for all $n \in \mathbb{N}$. It suffices to show that $\text{rk}\, (A^n)= \text{rk}\, (C^n) $ for all $n\in \mathbb{N}$. 
    
     First, we observe that, for any $n\in \mathbb N$,  
    $$
    A^n=\begin{pmatrix}
      0   & 0 \\
      B^{n-1} v   & B^{n}
    \end{pmatrix} \quad \text{and} \quad C^n=\begin{pmatrix}
      0   & 0 \\
      B^{n-1} v + B^{n} x   & B^{n}
    \end{pmatrix},
    $$
    where $x$ is such that $Bx=w$. Hence, 
    $$
    \text{rk}\, (A^n) =\begin{cases}
    \begin{array}{cc}
        \text{rk}\, (B^n),   & \text{if} \hspace{0.25cm} B^{n-1} v \in \Im\,B^n,   \\
        \text{rk}\, (B^n) + 1,   & \text{if} \hspace{0.25cm} B^{n-1} v \notin \Im\,B^n.  
    \end{array}
      \end{cases}
    $$
    
   On the other hand, since $B^{n-1} v \in  \Im\,B^n$ if and only if  $ B^{n-1} v + B^{n} x \in \Im\,B^n$, we obtain that
 $$
    \text{rk}\,(C^n) =\begin{cases}
    \begin{array}{cc}
        \text{rk}\, (B^n),   & \text{if} \hspace{0.25cm} B^{n-1} v \in \Im\,B^n,   \\
        \text{rk}\, (B^n) + 1,   & \text{if} \hspace{0.25cm} B^{n-1} v \notin \Im\,B^n.  
    \end{array}
      \end{cases}
    $$
 Therefore, $\dim( \Ker\, A^n)= \dim( \Ker\, C^n)$, for all $n$, which implies that $A$ is conjugate to $C$, and the lemma follows. 
\end{proof}

Throughout this article, we use the following notation.  The vector $\epsilon_\ell$ denotes the $\ell$th vector of the canonical basis of $\RR^d$, for $\ell=1, \dots, d$.  

\medskip 

The idea of the proof of the Proposition \ref{(v,B)} is contained in \cite[Theorem 3.4]{AB}.

\begin{proof}[Proof of Proposition \ref{(v,B)}] Set $t= m_1 + \cdots + m_r$ and we rename the elements of the canonical basis of $\mathbb R^{m}$ as $\{ \epsilon^1_1, \dots,\epsilon^1_{m_1}, \dots, \epsilon_1^r, \dots, \epsilon_{m_r}^r, \epsilon_{t+1}, \dots, \epsilon_{t+s}\}$.  We can write $v$ as 
\begin{equation*}
v= \sum_ {i=1}^r v_i \epsilon_1 ^i  + w + w_0,
\end{equation*}
where $v_i \in \mathbb R$, $w\in  \Im\,B$ and $w_0 \in \text{span}\{ \epsilon_{t+1}, \ldots, \epsilon_{t+s}\} \subseteq \Ker\, B$. Note that by Lemma  \ref{v+w} we may assume $w=0$. 
We deal with the following cases:
$$
\begin{cases}
\raa \; v \in \Im\,B ,\\
\rbb \; v \notin  \Im\,B \text{ and } v_i\neq 0 \text{ for some } i\in \{ 1, \dots, r\} ,\\
\rcc \; v \notin  \Im\,B \text{ and } v \in\text{span}\{ \epsilon_{t+1}, \ldots, \epsilon_{t+s}\} .
\end{cases}
$$

(a) If $v\in \Im\,B$, then by Lemma \ref{v+w},  $C$ is conjugate to $0_1 \oplus B$.

\medskip

(b) If $v\notin \Im\,B$ and there exists $i$ such that $v_i\neq 0$, set $d:=\text{min}\{i: v_i\neq 0\}$. Let 
$$
A=\J _{m_1}  \oplus \cdots \oplus \J_{m _{d-1}} \oplus \J_{m_{d} + 1}\oplus \J_{m_{d +1}} \oplus \cdots \oplus \J _{m_r} \oplus  0_s.
$$
In order to prove  that $C$ and $A$ are conjugate, we  show that  $\dim (\Ker\, C^n)=\dim (\Ker \,A^n)$, for $n \in \NN$. Moreover, since
\begin{equation*}
\dim (\Ker \, \J^n_{m_i}) = 
\begin{cases}
m_i    & \text{if } n \geq m_i,  \\
  n   &  \text{if }   n < m_i.
\end{cases}
\end{equation*}
and $C$ and $A$ are both nilpotent matrices such that $C^{m_1 +1}=A^{m_1 +1}=0 $, 
it is enough to show that $\dim (\Ker\, C^n)=\dim (\Ker \,A^n)$, for $1\leq n \leq m_1$.  
Since $v \notin \Im B$, in particular $v\neq 0$, and we have
\begin{equation*}
\dim (\Ker\,C) =\dim (\Ker\,B)=\sum_{i=1}^r \dim (\Ker\,\J_{m_i}) +s= r+s.
\end{equation*}
and
\begin{equation*}
\dim (\Ker\,A)=\sum_{i=1, i\neq d}^r\dim (\Ker\,\J_{m_i}) +\dim (\Ker\,\J_{m_d+1})+s= r+s.    
\end{equation*}
For  $2\leq n \leq m_1$,   let $ \ell  = \max\{ i \,  : m_i \geq n  \}$, i.e., 
$$ \text{either}\quad  
m_1 \geq \dots \geq  m_\ell \geq n > m_{\ell +1} \geq \cdots \geq m_r , \quad \text{or }\;\; m_r\geq n. 
$$
A straightforward computation shows that 
$C^n=\begin{pmatrix}
      0   & 0 \\
      B^{n-1} v   & B^{n}
    \end{pmatrix}
    $  with 
\begin{equation*}
  B^{n-1}v =\sum_ {i=d }^r \, v_i B^{n-1}\epsilon_1 ^i +B^{n-1}w_0 =  \begin{cases}
    \begin{array}{cc}
    0,   & \text{if } n-1\geq m_d \\
      \sum_ {i=d }^r \, v_i \epsilon_n ^i,   & \text{if }  n-1< m_d.
    \end{array}
\end{cases}  
\end{equation*}
Hence, we consider the following cases. 

\noindent {\bf Case: $n-1\geq m_d$.} Notice that in this case $\ell <d$, $\ell <r$ and $\dim(  \Ker C^n) =\dim(\Ker B^n) +1$.  We compute
\begin{eqnarray*}
\dim(\ker B^n)&= &
      \sum_{i=1}^\ell  \dim (\Ker\,\J^n_{m_i}) +
\sum_{i=\ell +1}^r \dim (\Ker\,\J^n_{m_i})+s=n\ell   +  \sum_{i=\ell +1}^r m_i +s. 
\end{eqnarray*}
On the other hand, since $n\geq m_d+1$ and $d > \ell$, it follows that
\begin{eqnarray*}
\dim(\ker A^n)&= &     
 \sum_{i=1}^\ell  \dim (\Ker\,\J^n_{m_i}) +
\sum_
{
\begin{smallmatrix}
i=\ell +1\\
i\neq d
\end{smallmatrix}
}^r \dim (\Ker\,\J^n_{m_i})+  \dim (\Ker\,\J^n_{m_d+1})     +  s , \\
&=&     
n\ell   +  \sum_{i=\ell +1}^r m_i +s+1=  \dim(\Ker B^n) +1=\dim (\Ker C^n )  .
\end{eqnarray*}
\noindent {\bf Case: $n-1<  m_d$.} Notice that, $d\leq \ell$ and $\dim(\ker C^n  )= \dim(\ker B^n  )$. Analogous to the previous case,  \[ \dim(\ker B^n)=\begin{cases}
n\ell   +  \sum_{i=\ell +1}^r m_i +s,  &\text{if }\; \ell<r, \\ nr+s , &\text{if }\; \ell =r. \end{cases}\] 
Since $n< m_d+1$ and $d \leq \ell$, it follows that
\begin{eqnarray*}
\dim(\ker A^n)&= &\begin{cases}
      \sum_
{
\begin{smallmatrix}
i=1\\
i\neq d
\end{smallmatrix}
  }^\ell  \dim (\Ker\,\J^n_{m_i}) +
\sum_
{i=\ell +1}^r \dim (\Ker\,\J^n_{m_i})+  \dim (\Ker\,\J^n_{m_d+1})     +  s, &\text{if }\; \ell<r, \\ \sum_
{
\begin{smallmatrix}
i=1\\
i\neq d
\end{smallmatrix}
  }^r  \dim (\Ker\,\J^n_{m_i}) +  \dim (\Ker\,\J^n_{m_d+1}) +s, &\text{if }\; \ell=r, \end{cases} \\
&=&\begin{cases}
n\ell   +  \sum_{i=\ell +1}^r m_i +s,  &\text{if }\; \ell<r, \\ nr+s , &\text{if }\; \ell =r , \end{cases}\\
&= & \dim (\Ker C^n ).  
\end{eqnarray*}

(c) If $ v \notin  \Im\,B \text{ and } v \in\text{span}\{ \epsilon_{t+1}, \ldots, \epsilon_{t+s}\}$.   Let 
$$A=\J _{m_1}  \oplus \cdots \oplus \J _{m_r} \oplus \J _{2}\oplus  0_{s-1}.$$
In order to show that $C$ and $A$ are conjugate, it is enough to show that  
$\dim( \ker C^{n}) =  \dim (\ker A^{n})$,  for  all $1\leq n\leq m_1-1$, since $C^{m_1}=A^{m_1}=0$. 
For $n=1$, we have that $\dim (\Ker\,C)=\dim (\Ker\,B)= r+s$ and 
$$
\dim (\Ker\,A)= \sum_{i=1}^r \dim (\Ker\,\J_{m_i}) + \dim (\Ker\,\J_{2})+s-1= r+s.
$$
For $2\leq n\leq m_1 -1$,  since $B^{n-1}v=0$, it follows that 
$$
\dim (\Ker\,C^n)= \dim (\Ker\,B^n)+1= \sum_{i=1}^r \dim (\Ker\,\J_{m_i}^n) +s+1.
$$
On the other hand,
$$
\dim (\Ker\,A^n)=\sum_{i=1}^r\dim (\Ker\,\J_{m_i}^n)+\dim (\Ker\,\J^n_{2}) + s-1 = \sum_{i=1}^r\dim (\Ker\,\J_{m_i}^n)+s+1.
$$
Hence, $C$ and  $A$ are conjugate. 

Finally, if $B=0_m$ and $v\neq 0$, let $A=\J _{2}\oplus  0_{m-1}$, then $C^2=A^2=0$ and $\dim (\text{Ker}\, C)= m=\dim (\text{Ker}\,  A)$. Therefore, $C$ is conjugate to $A$.
\end{proof}

\begin{remark} \label{rem:agrupando}
We point out that, in Proposition \ref{(v,B)} $\rbb$, different values of $d$ could  give rise to conjugate 
 matrices. For instance, if $m_i=m$ for all $i$, then the matrices in Proposition \ref{(v,B)} $\rbb$ are conjugate to $\J_{m+1}\oplus \J_m^{\oplus (r-1)} \oplus 0_s$ for all $d$. As a consequence of this simple observation we obtain the next corollary. 
    \end{remark}

\begin{corollary}\label{posibles_C}
Let  $C= \begin{pmatrix}
      0   & 0 \\
      v   & B
      \end{pmatrix}\in M(m+1,\RR)$ be a nilpotent matrix, where $v \in \mathbb R ^m$, $B\in M(m,\RR)$ with $\Lambda(B)=(m_1,\ldots, m_k; q_1,\ldots, q_k;s)$. Then $\Lambda(C)$ is one and only one of the following tuples:
    \begin{enumerate}
        \item[$\raa$] $(m_1,\ldots, m_k; q_1,\ldots, q_k;s+1)$;
        \item[] 
        \item[$\rbb$]  
         $\begin{cases}
            (m_1, \cdots, m_{d-1}, m_d, \cdots,  m_k; q_1, \cdots,  q_{d-1}+1, q_{d}-1, \cdots,q_k; s), & \text{if } m_{d-1}=m_d +1, \\
       \text{ or } &\\
     (m_1, \cdots, m_{d-1}, m_d +1, m_d, \cdots,  m_k; q_1, \cdots,  q_{d-1}, 1, q_{d}-1, \cdots,q_k; s),  & \text{if }m_{d-1}>m_d + 1 > m_{d};
      \end{cases}$
      for $d \in \{ 1, \dots, k\}$;
      \item[] 
        \item[$\rcc$] 
        $(m_1,\ldots, m_k, 2; q_1,\ldots, q_k, 1;s-1)$ for $s>0$.
    \end{enumerate}   

    \smallskip
    
    If $B=0_{m}$ and $v\neq 0$, then $\Lambda(C)=(2;1;m-1)$.
\end{corollary}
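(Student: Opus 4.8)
The plan is to read the corollary off directly from Proposition~\ref{(v,B)}, by translating its ungrouped Jordan decompositions into the grouped $\Lambda$-notation. Since $\Lambda(B)=(m_1,\ldots,m_k;q_1,\ldots,q_k;s)$ with $m_1>\cdots>m_k\geq 2$, the matrix $B$ equals $\J_{m_1}^{\oplus q_1}\oplus\cdots\oplus\J_{m_k}^{\oplus q_k}\oplus 0_s$, which is precisely the matrix of Proposition~\ref{(v,B)} after repeating each size $m_i$ exactly $q_i$ times (so the ungrouped count of nontrivial blocks is $r=q_1+\cdots+q_k$). I would then treat the three cases of that proposition in turn, and deduce the degenerate statement ($B=0_m$, $v\neq 0$) straight from its final assertion.

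Cases $\raa$ and $\rcc$ are immediate bookkeeping. In case $\raa$ one has $C\sim 0_1\oplus B$, which only appends a trivial Jordan block and hence sends $s\mapsto s+1$, yielding the tuple in $\raa$. In case $\rcc$ (requiring $s>0$) one has $C\sim B\oplus\J_2\oplus 0_{s-1}$, so a single trivial block is upgraded to a $\J_2$-block; rewriting this in $\Lambda$-notation gives the tuple in $\rcc$. Here I would remark parenthetically that if $m_k=2$ the new $\J_2$ is absorbed into $q_k$, and if $s=1$ the trailing $0$-block disappears, but in either case the resulting tuple still lies in ${\bigwedge}_{m+1}$.

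The substance of the argument is case $\rbb$. Proposition~\ref{(v,B)} asserts that, for the index $d$ defined there in the ungrouped labeling, $C$ is conjugate to the matrix obtained from $B$ by enlarging a single Jordan block of size $m_d$ to size $m_d+1$. By Remark~\ref{rem:agrupando}, the resulting conjugacy class depends only on the common size of the enlarged block, not on which of the $q_d$ copies is selected, so I may reindex case $\rbb$ by the distinct sizes $m_d$, $d\in\{1,\ldots,k\}$. Enlarging one copy of $\J_{m_d}$ lowers $q_d$ by one and produces one block of size $m_d+1$; the only point requiring care is where this new size lands. If $m_{d-1}=m_d+1$ it merges with the existing size-$m_{d-1}$ blocks, so $q_{d-1}$ increases by one, giving the first tuple in $\rbb$; if instead $m_{d-1}>m_d+1$ (and in particular when $d=1$, where no larger block constrains it) the value $m_d+1$ is a genuinely new size inserted between $m_{d-1}$ and $m_d$ with multiplicity one, giving the second tuple. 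Zero multiplicities arising when $q_d=1$ are understood to be dropped.

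Finally, for the ``one and only one'' assertion I would note that, for fixed $v$, the three alternatives $v\in\Im\,B$; $v\notin\Im\,B$ with some $v_i\neq 0$; and $v\notin\Im\,B$ with $v$ in the span of the basis vectors of the $0_s$-block, are mutually exclusive and exhaustive, so $C$ falls under exactly one case; within case $\rbb$ the size $m_d$ is determined by $v$, and uniqueness of the Jordan normal form guarantees the displayed tuples are distinct conjugacy invariants. The only genuine subtlety throughout is the correct merging-versus-insertion of block sizes in case $\rbb$, i.e.\ faithfully reproducing Remark~\ref{rem:agrupando} in the grouped notation; everything else is direct translation.
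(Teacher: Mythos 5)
Your proposal is correct and follows exactly the route the paper intends: the corollary is stated as a direct consequence of Proposition~\ref{(v,B)} together with the regrouping observation of Remark~\ref{rem:agrupando}, which is precisely your translation of the ungrouped Jordan decompositions into $\Lambda$-notation with the merging-versus-insertion bookkeeping in case $\rbb$. Your parenthetical handling of the edge cases (absorption of the new $\J_2$ into $q_k$ when $m_k=2$ in case $\rcc$, dropping zero multiplicities when $q_d=1$) and the distinctness argument for ``one and only one'' are correct refinements of details the paper leaves implicit.
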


\subsection{The non-nilpotent case}

The following results are  analogous  to those  given in the previous section in the non-nilpotent setting. 

\begin{lemma}\label{B(a)+D}
Let $w\in \RR^k,$  $Q \in M(k,\RR)$ and $b\in \RR$ such that $b\notin$ \emph {spec}\,$(Q)$. 
\begin{enumerate}
\item[\emph{(i)}] If $C=\left(\begin{array}{cc}
      b  & 0  \\
      w &  Q 
      \end{array}\right)$, then $C$ is conjugate to $\left(\begin{array}{cc}
      b  & 0  \\
      0 &  Q 
      \end{array}\right)$. 
\item[\emph{(ii)}] 
If $C=\left(\begin{array}{c|c|c}
      b  & 0  & 0\\
      \hline
      u &  R & 0 \\
      \hline
      w & 0 & Q
      \end{array}\right)$, with  $u\in \RR^n$, $R \in M_b(n,\RR)$, then $C$ is conjugate to $\left(\begin{array}{c|c|c}
      b  & 0  & 0\\
      \hline
      u &  R & 0 \\
      \hline
      0 & 0 & Q
      \end{array}\right)$.
      \end{enumerate}
\end{lemma}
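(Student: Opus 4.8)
The plan is to reduce both parts to a single elementary fact: if $S\in M(p,\RR)$ and $Q\in M(k,\RR)$ have disjoint spectra, then for any rectangular matrix $W$ of size $k\times p$ the block matrix $\left(\begin{smallmatrix} S & 0\\ W & Q\end{smallmatrix}\right)$ is conjugate to $\left(\begin{smallmatrix} S & 0\\ 0 & Q\end{smallmatrix}\right)$, and the conjugation can be chosen so as to fix the diagonal blocks. First I would conjugate by $P=\left(\begin{smallmatrix} I_p & 0\\ X & I_k\end{smallmatrix}\right)$ for an unknown $X$ of size $k\times p$; a direct computation gives $PCP^{-1}=\left(\begin{smallmatrix} S & 0\\ W+XS-QX & Q\end{smallmatrix}\right)$, so the off-diagonal block vanishes precisely when $X$ solves the Sylvester equation $QX-XS=W$. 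Since $P$ acts as the identity on the first $p$ coordinates, this conjugation automatically leaves both $S$ and $Q$ untouched.

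With this in hand, part (i) is the case $p=1$, $S=(b)$, $W=w$: the equation $QX-Xb=w$ becomes $(Q-bI_k)X=w$, which is solvable because $b\notin\text{spec}(Q)$ forces $Q-bI_k$ to be invertible. For part (ii) I would group the top-left corner into the single block $S=\left(\begin{smallmatrix} b & 0\\ u & R\end{smallmatrix}\right)\in M(n+1,\RR)$ and set $W=(w\mid 0)$, the $k\times(n+1)$ matrix whose first column is $w$ and whose remaining columns vanish, so that $C=\left(\begin{smallmatrix} S & 0\\ W & Q\end{smallmatrix}\right)$. Because $R\in M_b(n,\RR)$ we have $\text{spec}(R)=\{b\}$, and the block-triangular shape of $S$ then gives $\text{spec}(S)=\{b\}$; as $b\notin\text{spec}(Q)$, the spectra of $S$ and $Q$ are disjoint. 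Applying the observation yields a conjugation by $\left(\begin{smallmatrix} I_{n+1} & 0\\ X & I_k\end{smallmatrix}\right)$ that preserves the block $S=\left(\begin{smallmatrix} b & 0\\ u & R\end{smallmatrix}\right)$ and kills the coupling block $w$, producing exactly the asserted matrix.

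The only real content is the solvability of $QX-XS=W$ under disjoint spectra, and this is where I expect to spend a line of care rather than meet a genuine obstacle. One may simply invoke the classical Sylvester theorem, but here it can be verified by hand since $\text{spec}(S)=\{b\}$ is a single point: writing $S=bI_p+N$ with $N$ nilpotent, the operator $X\mapsto QX-XS$ equals $L-R$, where $L(X)=(Q-bI_k)X$ is invertible (as $Q-bI_k$ is invertible) and $R(X)=XN$ is nilpotent. Left and right multiplication commute, so $L-R=L(\Id-L^{-1}R)$ with $L^{-1}R$ nilpotent; hence $\Id-L^{-1}R$ is invertible and so is $L-R$, giving a unique $X$. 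This makes the argument self-contained, and both parts follow at once.
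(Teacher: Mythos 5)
Your proposal is correct, and part (i) is literally the paper's argument: both solve $(Q-bI_k)x=w$ and conjugate by the unipotent matrix $\left(\begin{smallmatrix}1&0\\ x& I_k\end{smallmatrix}\right)$ (the paper writes it as $P^{-1}CP$ with $-x$, which is the same conjugation). Where you genuinely diverge is part (ii). The paper exploits the fact that the block coupling $Q$ to $R$ is already zero: it reuses the very same vector $x=(Q-bI_k)^{-1}w$ from part (i), places $-x$ in the bottom-left corner of a $3\times 3$ block unipotent matrix with $0$ in the adjacent block, and verifies in one line that the bottom-left block becomes $w-(Q-bI_k)x=0$ while every other block is untouched; no Sylvester theory is needed. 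You instead absorb the corner into a single block $S=\left(\begin{smallmatrix}b&0\\ u&R\end{smallmatrix}\right)$, set $W=(w\mid 0)$, and solve the full Sylvester equation $QX-XS=W$, justifying solvability by factoring the Sylvester operator as an invertible operator composed with identity minus a nilpotent one (your verification that $X\mapsto (Q-bI_k)^{-1}XN$ is nilpotent is correct, and the factorization $L-R=L(\operatorname{Id}-L^{-1}R)$ needs no commutativity, so the argument is sound). This costs more machinery but buys generality: your lemma removes an arbitrary coupling block $W$ whenever $\operatorname{spec}(S)$ and $\operatorname{spec}(Q)$ are disjoint, whereas the paper's computation is tailored to the special shape $W=(w\mid 0)$. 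In fact the two proofs produce the same conjugating matrix, since the unique solution of your Sylvester equation is $X=\bigl((Q-bI_k)^{-1}w\mid 0\bigr)$, i.e., exactly the paper's choice.
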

\begin{proof}
 Since $Q-bI$ is invertible, let $x=(Q-bI)^{-1} w$.

(i) We observe that $\left(\begin{array}{cc}
      b  & 0  \\
      0 &  Q 
      \end{array}\right)=P^{-1}CP$ with $P=\left(\begin{array}{cc}
      1  & 0  \\
     -x &  I_k 
      \end{array}\right)$.

  (ii) By considering   $P=\left(\begin{array}{c|c|c}
   1   & 0 & 0\\
   \hline
      0   & I_n & 0\\
      \hline
      -x & 0 & I_k
    \end{array}\right)$, it is easy to see that
$\left(\begin{array}{c|c|c}
      b  & 0 & 0\\
      \hline
      u  & R & 0\\
      \hline
      0 & 0 & Q
     \end{array}\right) = P^{-1} C P$.
\end{proof}
Given an almost abelian Lie algebra $\ggo_C$ with $C\neq 0$, it follows from Proposition \ref{ad-conjugated} that the family of Lie algebras isomorphic to $\ggo_C$ is parametrized by the conjugacy classes of $C$ up to a nonzero multiple. 
The purpose of the next lemma is to choose an appropriate matrix $M$  which is conjugate to a multiple of $C$.
\begin{lemma}\label{lem:isom-classes} Let $b\in \RR$, $R\in M_b(n,\RR)$, $u\in \RR^n$ and $Q \in M(k,\RR)$ with $k\geq 1, \; b\notin$ \emph {spec}\,$(Q)$. Consider  \[ C=\left(\begin{array}{c|c|c}
      b  & 0  & 0\\
      \hline
      u &  R & 0 \\
      \hline
      0 & 0 & Q
      \end{array}\right). \]
\begin{enumerate}
    \item[\emph{(i)}]
If  $b\neq  0$, then $\ggo _C$ is isomorphic to $\ggo _M$ where 
   \[ M=\left(\begin{array}{c|c|c}
      1  & 0  & 0\\
      \hline
      \dfrac 1b \, u & \dfrac 1b \,  R & 0 \\
      \hline
      0 & 0 &\dfrac 1b \,  Q
      \end{array}\right) .\]
  \item[\emph{(ii)}]    If  $b= 0$, so that $R$ is nilpotent, and 
  $\emph{spec}\, ( Q)=\{ \lambda_1, \ldots , \lambda_\ell \}$ with $|\lambda _1| \geq \cdots  \geq |\lambda_\ell |>0$, 
  then $\ggo _C$ is isomorphic to $\ggo _M$ where $M$ is given by 
   \[ M=\left(\begin{array}{c|c|c}
      0  & 0  & 0\\
      \hline
      \dfrac 1d \, u & \dfrac 1d \,  R & 0 \\
      \hline
      0 & 0 &\dfrac 1d \,  Q
      \end{array}\right),    \quad \text{where } d=  |\lambda_1 |.\]
      \end{enumerate}
      Moreover, if $\ggo_M \cong \ggo_{M_1}$ with 
      \[M_1 =\left(\begin{array}{c|c|c}
      0  & 0  & 0\\
      \hline
      u_1 &  R_1 & 0 \\
      \hline
      0 & 0 & Q_1
      \end{array}\right), \; R_1 \text{ nilpotent, }  \emph {spec}\, ( Q_1)=\{ \mu_1, \ldots , \mu_s \}, \; 1=|\mu _1| \geq \cdots  \geq |\mu_s |>0,\]
      then $s=\ell$ and $M_1$ is conjugate to $\pm M$.
\end{lemma}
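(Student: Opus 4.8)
The plan is to convert the isomorphism $\ggo_M \cong \ggo_{M_1}$ into a conjugacy statement by means of Proposition \ref{ad-conjugated}, and then to extract both conclusions purely from the spectra of the matrices at hand. By Proposition \ref{ad-conjugated}, $\ggo_M \cong \ggo_{M_1}$ holds if and only if there exists a real number $c\neq 0$ such that $M_1$ is conjugate to $cM$. I would fix such a $c$ and show that necessarily $s=\ell$ and $c=\pm 1$, whence $M_1$ conjugate to $cM=\pm M$ is immediate.

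First I would record the block structure. Writing
\[
N=\begin{pmatrix} 0 & 0 \\ \tfrac 1d\, u & \tfrac 1d\, R\end{pmatrix}, \qquad
N_1=\begin{pmatrix} 0 & 0 \\ u_1 & R_1\end{pmatrix},
\]
both $N$ and $N_1$ are nilpotent, since $R$ and $R_1$ are nilpotent and the first diagonal entry is $0$. Thus $M=N\oplus \tfrac 1d Q$ and $M_1=N_1\oplus Q_1$ are block diagonal, the first block being nilpotent and the second block ($\tfrac 1d Q$, resp. $Q_1$) being invertible because $0\notin \operatorname{spec}(Q)$ and $0\notin\operatorname{spec}(Q_1)$. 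Consequently $\operatorname{spec}(cM)=\{0\}\cup\{\tfrac cd\lambda_1,\dots,\tfrac cd\lambda_\ell\}$, where the scaled eigenvalues $\tfrac cd\lambda_j$ are nonzero and remain pairwise distinct (multiplication by the nonzero real $\tfrac cd$ preserves distinctness and sends complex-conjugate pairs to complex-conjugate pairs, as $c,d\in\RR$), and similarly $\operatorname{spec}(M_1)=\{0\}\cup\{\mu_1,\dots,\mu_s\}$ with the $\mu_i$ nonzero.

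Next, since conjugate matrices have equal sets of distinct eigenvalues, $\operatorname{spec}(cM)=\operatorname{spec}(M_1)$; deleting the common eigenvalue $0$ yields the equality of the nonzero parts
\[
\Big\{\tfrac cd\lambda_1,\dots,\tfrac cd\lambda_\ell\Big\}=\{\mu_1,\dots,\mu_s\}.
\]
Comparing cardinalities gives $s=\ell$, and comparing largest moduli gives $\tfrac{|c|}{d}\,|\lambda_1|=|\mu_1|$. Since $d=|\lambda_1|$ and $|\mu_1|=1$, this forces $|c|=1$, and as $c$ is real we conclude $c=\pm 1$. Therefore $M_1$ is conjugate to $cM=\pm M$, which is the desired assertion.

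The only genuinely delicate point is the clean splitting of the spectrum of each block matrix into the contribution of the nilpotent block (eigenvalue $0$ only) and that of the invertible block; once this is in place, matching the sets of nonzero eigenvalues simultaneously delivers $s=\ell$ and pins down $|c|=1$. One should also take minor care that, because $Q$ and $Q_1$ may have non-real eigenvalues, the ordering $|\lambda_1|\geq\cdots\geq|\lambda_\ell|$ is genuinely by absolute value and is preserved under multiplication by the real scalar $\tfrac cd$, so that the maximal modulus on the left is indeed $\tfrac{|c|}{d}|\lambda_1|=|c|$.
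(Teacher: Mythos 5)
Your argument for the ``Moreover'' assertion is correct and is essentially the paper's own proof: both invoke Proposition \ref{ad-conjugated} to replace the isomorphism $\ggo_M\cong\ggo_{M_1}$ by conjugacy of $M_1$ to $cM$, and then compare spectra --- your explicit splitting $\text{spec}\,(cM)=\{0\}\cup\{\tfrac cd\lambda_1,\dots,\tfrac cd\lambda_\ell\}$ merely spells out what the paper states tersely as $\text{spec}\,(Q_1)=\text{spec}\,\bigl(\tfrac cd\,Q\bigr)$ --- to conclude $s=\ell$ and $|c|=1$, hence $c=\pm1$. The only omission is that you never address part (i) and the first claim of (ii); these are immediate (and the paper likewise dismisses them) because $M$ is by construction the nonzero scalar multiple $\tfrac 1b\,C$ (resp.\ $\tfrac 1d\,C$), so Proposition \ref{ad-conjugated} applies directly.
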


\begin{proof}
    The proof of (i) and the first assertion in (ii) is a straightforward consequence of Proposition \ref{ad-conjugated}.     To complete the proof of (ii), observe that
    \[\text{spec}\, \left( \frac 1d \, Q\right)=\left\{ \frac{\lambda_1}d, \ldots , \frac{\lambda_\ell}d \right\}, \quad \text{with} \quad 1= \frac{|\lambda _1|}d \geq \cdots  \geq \frac{|\lambda_\ell |}d>0 .\] 
Since $\ggo_M$ is isomorphic to $\ggo _{M_1}$, it follows from Proposition \ref{ad-conjugated} that $M_1$ is conjugate to $cM$ for some non-zero real number  $c$. In particular, $\text{spec}\, \left(   Q_1 \right)= \text{spec}\, \left( \dfrac cd \, Q\right)$. Therefore, $s=\ell$ and clearly 
\[ 
\frac{|c|}d\, |\lambda _1| \geq \cdots  \geq \frac{|c|}d \, |\lambda_\ell | ,
\]
therefore, $1=| \mu_1 | = \dfrac {|c|}d \, |\lambda _1|=|c|$,  which implies $c=\pm1$ since $c\in \RR$. This completes the proof of (ii).
\end{proof}

\section{Classification of almost abelian Lie algebras admitting complex structures}

\subsection{Characterization of almost abelian Lie algebras admitting a complex structure} In this section it will be useful to view $M (m, \CC )$ inside of $M (2m , \RR)$ as follows.    Define $\varphi: M (m, \CC ) \to M (2m , \RR)$ by:
\begin{equation}\label{eq:alg-iso}    
 \varphi(M)= 
\begin{pmatrix} M_1 & -M_2 \\
M_2 & M_1
\end{pmatrix},\quad \text{ where } \quad M=M_1+iM_2, \;\; M_1,\, M_2 \in M (m , \RR). 
\end{equation} 
Setting 
\begin{equation}\label{eq:Jm} J_m=
 \begin{pmatrix} 0&-I_m \\
    I_m&0
\end{pmatrix},   
 \end{equation} 
the image of $\varphi$ is the Lie subalgebra of $M(2m, \RR)$ given by
\begin{equation*}
\begin{split}
 M_{J_m}(2m, \RR)& :=  \left\{ B\in M(2m, \RR): B J _m = J _m B\right \} \\
 & = 
 \left\{ \begin{pmatrix} M_1 & -M_2 \\
M_2 & M_1
\end{pmatrix} : M_1, M_2 \in M(m, \RR) \right\}.    
\end{split}
\end{equation*} 
Since $\varphi$ is an injective $\RR$-algebra homomorphism, we obtain that $M(m, \CC )$ and  $M_{J_m}(2m, \RR)$ are isomorphic as real Lie algebras.
\begin{remark}\label{rem:k-step}
We point out that, a matrix $M\in M (m, \CC )$ is nilpotent if and only if  
$\varphi(M) \in M (2m , \RR)$ is nilpotent. Moreover, if $M\in M (m, \CC )$ is nilpotent,  then $M$ and $\varphi(M)$ have the same minimal polynomial. Let $x^k$ be the minimal polynomial of $\varphi(M)$, then $x^k$  is the minimal polynomial of $M$ and therefore $k\leq m$.
\end{remark} 

\begin{remark} \label{rem_complex-conj} In general, each $J\in M(2m, \RR)$ satisfying $J^2= -I_{2m}$ determines the Lie subalgebra of $M(2m, \RR)$ given by 
\begin{equation}\label{M_J}
M_{ J}(2m, \RR):= \left\{B\in M(2m, \RR): B J =  J B\right \},
\end{equation} which is isomorphic to $M(m,\CC)$ as real Lie algebras. Moreover, $M_{ J}(2m, \RR)$ is conjugate to $M_{ J_{m}}(2m, \RR)$ in $GL(2m, \RR)$.
\end{remark}
  
\medskip
 
Given a $2m$-dimensional real vector space $V$ and an endomorphism $j$ of $V$ satisfying $j^2=-\id$, we define
\[ \glg(V , j):=\{ S\in \glg(V) : S\circ j=j \circ S \}. \]

Given a basis $\mathcal B$ of $V$ and $S\in \glg(V , j)$, 
we denote by $B$ and $J$ the matrices of $S$ and $j$ with respect to $\mathcal B$, respectively. Then, using the notation introduced in \eqref{M_J}, $B \in M_J\left(2m, \RR\right)$.

We recall from \cite[Lemma 6.1]{LRV} the characterization of almost abelian Lie algebras with a complex  structure (see also \cite[Lemma 3.1]{AO}):
\begin{proposition}\label{complex} 
Let $\ggo$ be a $2n$-dimensional almost abelian Lie algebra with  codimension-one abelian ideal $\hg$. If $j$ is a complex structure on $\ggo$, then there exist $e_0\notin \hg$  and a $j$-invariant ideal $\mathfrak u \subset \hg$ such that  $j e_0\in \hg$, $\hg =\RR je_0 \oplus \mathfrak u$ and $\ad_{e_0}|_{\mathfrak u} \in \glg(\mathfrak u , j_{\mathfrak u}) $, where $j_{\mathfrak u}=j|_ {\mathfrak u}$. 
\end{proposition}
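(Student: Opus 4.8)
The plan is to build the decomposition from the parity mismatch between $j$ and the odd-dimensional ideal $\hg$, and then to read off the commutation relation $\ad_{e_0}\circ j=j\circ\ad_{e_0}$ on the relevant subspace directly from the integrability of $j$.

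First I would set $\mathfrak u:=\hg\cap j\hg$. Since $\dim\hg=2n-1$ is odd, $\hg$ cannot be $j$-invariant, so $j\hg\neq\hg$; as both subspaces have codimension one this forces $\hg+j\hg=\ggo$, whence the dimension formula gives $\dim\mathfrak u=(2n-1)+(2n-1)-2n=2n-2$. Moreover $\mathfrak u$ is $j$-invariant, because $j(\hg\cap j\hg)=j\hg\cap j^2\hg=j\hg\cap\hg=\mathfrak u$, using $j^2=-\id$ so that $j^2\hg=\hg$ as a subspace. I would then pick any $h\in\hg\setminus\mathfrak u$ and set $e_0:=-jh$, so that $je_0=-j^2h=h\in\hg$. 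The key bookkeeping observation is that $\mathfrak u=\hg\cap j\hg$ is exactly $\{y\in\hg:jy\in\hg\}$; hence $h\in\hg\setminus\mathfrak u$ means $jh\notin\hg$, giving $e_0=-jh\notin\hg$, and since $\mathfrak u$ has codimension one in $\hg$ and $je_0=h\notin\mathfrak u$ we obtain $\hg=\RR je_0\oplus\mathfrak u$. This already secures $e_0\notin\hg$, $je_0\in\hg$, and the required splitting of $\hg$.

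The core step is to feed the integrability condition the right arguments. Taking $x=je_0$ (so $jx=-e_0$) and an arbitrary $y\in\mathfrak u$, the vectors $x,y,jy$ all lie in the abelian ideal $\hg$ (here $jy\in\mathfrak u$ by $j$-invariance), so in $N_j(je_0,y)=[je_0,y]+j([-e_0,y]+[je_0,jy])-[-e_0,jy]$ the brackets $[je_0,y]$ and $[je_0,jy]$ vanish, leaving $N_j(je_0,y)=\ad_{e_0}(jy)-j\,\ad_{e_0}(y)$. Integrability $N_j=0$ therefore yields
\[
\ad_{e_0}(jy)=j\,\ad_{e_0}(y)\qquad\text{for all }y\in\mathfrak u.
\]
From this I would deduce invariance of $\mathfrak u$ under $\ad_{e_0}$: for $y\in\mathfrak u$ we have $jy\in\mathfrak u\subset\hg$, and since $\hg$ is an ideal $\ad_{e_0}(jy)\in\hg$; the displayed identity then forces $j\,\ad_{e_0}(y)\in\hg$, which by the characterization of $\mathfrak u$ means $\ad_{e_0}(y)\in\mathfrak u$. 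Thus $\ad_{e_0}$ preserves $\mathfrak u$, and because $\hg$ is abelian this makes $\mathfrak u$ an ideal of $\ggo$. Finally, as both $jy$ and $\ad_{e_0}(y)$ now lie in $\mathfrak u$, the same identity says precisely that $\ad_{e_0}|_{\mathfrak u}$ commutes with $j_{\mathfrak u}=j|_{\mathfrak u}$, i.e.\ $\ad_{e_0}|_{\mathfrak u}\in\glg(\mathfrak u,j_{\mathfrak u})$.

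The only delicate point, which I would write out most carefully, is the identification $\hg\cap j\hg=\{y\in\hg:jy\in\hg\}$: it is what simultaneously makes the construction of $e_0$ valid and turns the integrability identity into the invariance $\ad_{e_0}(\mathfrak u)\subseteq\mathfrak u$. Apart from this, the whole statement reduces to a single application of $N_j=0$, so no substantial computation remains.
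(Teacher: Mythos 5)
Your proof is correct and complete: the identification $\hg\cap j\hg=\{y\in\hg: jy\in\hg\}$, the dimension count forcing $\hg+j\hg=\ggo$, the construction of $e_0$, and the single application of $N_j(je_0,y)=0$ (where all brackets among $je_0,y,jy\in\hg$ vanish because $\hg$ is abelian) all check out, and the $\ad_{e_0}$-invariance of $\mathfrak u$ does follow exactly as you argue. Note that the paper does not prove Proposition~\ref{complex} itself but recalls it from \cite[Lemma 6.1]{LRV} (see also \cite[Lemma 3.1]{AO}), and your argument --- taking $\mathfrak u=\hg\cap j\hg$ and reading the commutation relation off integrability --- is essentially the standard one given in those references.
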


\begin{remark}\label{rem:complex}
According to Proposition \ref{complex}, given a complex structure $j$ on a $2n$-dimensional almost abelian Lie algebra $\ggo$, there exists a basis $\{e_0, e_1,\ldots,e_{2n-1}\}$ of $\ggo$ such that $e_1=je_0$, $\hg=\text{span}\, \{e_i : i\geq 1\}$ and $\mathfrak u =\text{span}\, \{e_i : i\geq 2\}$. Let $J$ be the matrix of $j_{\mathfrak u}$ in the given basis of $\mathfrak u$, then  
the matrix $C$ of $\ad_{e_0}|_\hg$ in the basis $\{ e_i : i\geq 1\}$ is given by 
\begin{equation}\label{eq:complex-abelian}    
C=\begin{pmatrix} 
      a   & 0 \\
      v   & B
    \end{pmatrix}, \qquad  \text{where }a \in \RR,\; v\in \RR^{2(n-1)}\; \text{and } B\in M_{J}(2(n-1), \RR).
\end{equation}   

Conversely, let $C$ be a matrix  as in \eqref{eq:complex-abelian}  where $J\in   M(2(n-1), \RR)$ and $J^2=-I_{2(n-1)}$. Consider the $2n$-dimensional almost abelian Lie algebra $\ggo_C= \RR e_0 \ltimes _C \RR^{2n-1}  $, that is, $[e_0, e_i]=Ce_i$, $i\geq 1$.   
Let $\mathfrak u =\text{span}\, \{e_i : i\geq 2\}$ and define $j\in \glg(\ggo_C)$ such that the matrix of  $j|_{\mathfrak u}\in 
 \glg(\mathfrak u)$  in this basis is $J$ and $j e_0=e_1$, $je_1=-e_0$.  Then $j$ satisfies the integrability condition $N_j\equiv 0$, that is, $j$ 
is a complex structure on $\ggo_C$. 
\end{remark}

\begin{proposition}\label{solv-complex2n}
Let $J\in M(2m, \RR)$ such that $J^2=-I_{2m}$ and let $B\in M(2m, \RR)$. Then $B$ is conjugate in $GL(2m, \RR)$ to a matrix in $M_J (2m, \RR)$ if and only if for any real eigenvalue $\lambda$ of $B$ the following condition is satisfied 
\begin{equation}\label{ast}
 \text{if }\, \Lambda (B(\lambda))=\begin{cases}
( n_1,\ldots , n_k; p_1, \ldots , p_k; t ), & \text{ then  } t \text{  and } p_i \text{  are even for all } i,\\
(m_{\lambda} ), & \text{ then }\, m_{\lambda} \text{  is even}.  
\end{cases} 
 \end{equation}
 In particular, if $B$ is conjugate to a matrix in $M_J (2m, \RR)$, then its semisimple and nilpotent parts are conjugate to matrices in $M_{J'} (2m, \RR)$ and $M_{J''} (2m, \RR)$, respectively, for some $J'$ and $J''$. 
\end{proposition}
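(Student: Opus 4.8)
The plan is to reduce everything to the model complex structure $J_m$ and to the explicit realification map $\varphi$ of \eqref{eq:alg-iso}. By Remark \ref{rem_complex-conj}, $M_J(2m,\RR)$ is conjugate in $GL(2m,\RR)$ to $M_{J_m}(2m,\RR)=\varphi(M(m,\CC))$; hence $B$ is conjugate to a matrix in $M_J(2m,\RR)$ if and only if $B$ is conjugate to $\varphi(M)$ for some $M\in M(m,\CC)$. Since two real matrices are conjugate over $\RR$ exactly when they are conjugate over $\CC$, the whole statement reduces to deciding for which real $B$ there is an $M\in M(m,\CC)$ with $B$ conjugate to $\varphi(M)$.

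The key computation is the effect of $\varphi$ on a complex Jordan block. A direct inspection of \eqref{eq:alg-iso} gives $\varphi(\J_n(\lambda))=\J_n(\lambda)\oplus\J_n(\lambda)$ when $\lambda\in\RR$, while for $\mu=b+id$ with $d>0$ the matrix $\varphi(\J_n(\mu))$ has, over $\CC$, the single pair of Jordan blocks $\J_n(\mu),\J_n(\overline\mu)$, so it is conjugate over $\RR$ to the real block $\mathcal{C}_n(b,d)$ of \eqref{eq:matriz-inflada}, and blocks attached to $\overline\mu$ yield the same real contribution. Conjugating $M$ to its complex Jordan form (which only replaces $\varphi(M)$ by a conjugate, since $\varphi$ is an algebra homomorphism and $\varphi(P)$ is invertible for invertible $P$) and applying these two facts, one reads off the real Jordan form of $\varphi(M)$: at every real eigenvalue the Jordan blocks occur in pairs of equal size, whereas the non-real eigenvalues impose no restriction. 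This is precisely condition \eqref{ast}, which settles the ``only if'' direction. For the converse I would assume \eqref{ast} and decompose $B$ as in \eqref{Cdecomposition}. On each real eigenvalue block, \eqref{ast} lets us split the Jordan data into two identical halves, so that block equals $\varphi$ of the corresponding real matrix; on each non-real pair $\{\mu,\overline\mu\}$ we assign all summands $\mathcal{C}_{n_i}(b,d)$ and $\mathcal{C}(b,d)$ to $\J_{n_i}(\mu)$ and $\mu$ respectively, again realizing the block as $\varphi$ of a complex matrix. Assembling these pieces exhibits $B$ as conjugate to $\varphi(M)$, the dimension count working out precisely because every real eigenvalue multiplicity is even.

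For the final assertion I would use that $\varphi$ preserves the Jordan--Chevalley decomposition. Writing $M=M_s+M_n$ with $M_s$ semisimple, $M_n$ nilpotent and $[M_s,M_n]=0$, the images $\varphi(M_s)$ and $\varphi(M_n)$ commute; $\varphi(M_n)$ is nilpotent by Remark \ref{rem:k-step}, and $\varphi(M_s)$ is semisimple because $\varphi$ of a diagonal matrix is diagonalizable over $\CC$. Hence $\varphi(M)=\varphi(M_s)+\varphi(M_n)$ is the Jordan--Chevalley decomposition of $\varphi(M)$, so if $B=P\varphi(M)P^{-1}$ then the semisimple and nilpotent parts of $B$ are $P\varphi(M_s)P^{-1}$ and $P\varphi(M_n)P^{-1}$, each conjugate to a matrix in $M_{J_m}(2m,\RR)$; taking $J'=J''=J_m$ (equivalently $J$, via Remark \ref{rem_complex-conj}) completes the proof.

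The step I expect to be the main obstacle is the bookkeeping of Jordan blocks in the second paragraph: one must verify that the real eigenvalues of $\varphi(M)$ arise only from the real eigenvalues of $M$, each doubled, and that conjugate eigenvalue pairs are produced freely, so that the parity constraint falls exactly on the real eigenvalues and nowhere else. Phrasing this cleanly---ideally through the $\CC$-conjugacy $\varphi(M)\sim M\oplus\overline M$, which simultaneously yields the block computation and the Jordan--Chevalley claim---is where the argument needs the most care.
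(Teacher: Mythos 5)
Your proof is correct, and it follows the same basic strategy as the paper --- realify $M(m,\CC)$ inside $M(2m,\RR)$ and track what happens to Jordan blocks --- but the implementation differs in two respects worth noting. First, the paper works with the \emph{interleaved} realification $\psi(A)=\left(\mathcal{C}(b_{kj},d_{kj})\right)$ (whose image is $M_{J}(2m,\RR)$ for $J=J_1^{\oplus m}$) precisely because $\psi$ satisfies $\psi(A\oplus C)=\psi(A)\oplus\psi(C)$ and $\psi(\J_n(a+ib))=\mathcal{C}_n(a,b)$ \emph{exactly}, so the real Jordan form of $B$ can be read off without ever complexifying; you instead use the stacked map $\varphi$ of \eqref{eq:alg-iso}, for which direct sums and Jordan blocks are only recovered up to permutation conjugacy, and you compensate via the identity $\varphi(M)\sim_{\CC} M\oplus\overline{M}$ together with the fact that real matrices conjugate over $\CC$ are conjugate over $\RR$. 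That route is sound and arguably makes the ``doubling at real eigenvalues'' mechanism more transparent, at the cost of the permutation bookkeeping you yourself flag. Second, for the final assertion about semisimple and nilpotent parts, your argument is genuinely different from the paper's: the paper simply observes that if $B$ satisfies condition \eqref{ast} then so do $B_s$ and $B_n$, and reapplies the equivalence, whereas you show that $\varphi$ carries the Jordan--Chevalley decomposition of $M$ to that of $\varphi(M)$ (using that $\varphi(M_s)$ is semisimple, $\varphi(M_n)$ is nilpotent, and they commute), so that $B_s=P\varphi(M_s)P^{-1}$ and $B_n=P\varphi(M_n)P^{-1}$ land in a conjugate of $M_{J_m}(2m,\RR)$ directly. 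Your structural argument is cleaner in that it avoids rechecking the combinatorial condition, while the paper's is shorter given that the equivalence has already been established; both are valid.
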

\begin{proof}
Without loss of generality, we may suppose that $J=J_1 ^{\oplus m}$, where $J_1= \left(\begin{array}{cc}
      0  & -1  \\
      1 &  0 
      \end{array}\right)$. Let $\psi$ be the injective $\RR$-algebra homomorphism given by
$$
\psi : M(m, \CC) \rightarrow M(2m, \RR), \hspace{0.5cm} \psi (A)=\left(\mathcal{C}(b_{kj}, d_{kj})\right),  \text{ for } A=(b_{kj}+id_{kj}), 
$$  
that is, $\psi(A)$ is the real $2m\times 2m$ matrix consisting  of $m^2$ blocks of size $2\times 2$ of the form $\mathcal{C}(b_{kj}, d_{kj})$ as in \eqref{eq:complex-matrix}.
It is straightforward that Im $\psi = M_J(2m, \RR)$. Also, it is easy to check  that $\psi$ satisfies the following properties: 
\begin{enumerate}
    \item[(i)] $\psi (A)$ is conjugate to $\psi (C)$ for conjugate matrices $A$, $C \in M(m, \CC)$;
    \item[(ii)] $\psi (A\oplus C)=\psi (A)\oplus \psi (C)$ for square matrices $A$ and $C$;
    \item[(iii)] for real numbers $a,b$ with $b>0$,  $\psi (\mathcal{J}_n(a+ ib))=\mathcal{C}_n(a,b)$  (see \eqref{eq:matriz-inflada}) and  $\psi (\mathcal{J}_n(a))$ is conjugate to $\mathcal{J}_n(a)\oplus \mathcal{J}_n(a)$.
\end{enumerate}

 Now, for $B\in M_J (2m,\RR)$, the matrix $\psi^{-1}(B)\in M(m, \CC)$ is conjugate to its Jordan normal form $\mathcal J (\psi^{-1} (B))$. Hence, by property (i) above, $B$ is conjugate to $\psi(\mathcal J (\psi^{-1} (B)) )$. By conditions (ii) and (iii) we obtain the real Jordan normal form of $B$ (up to reordering). Finally, we have that the sets of distinct real eigenvalues of $\psi^{-1}(B)$ and $B$ coincide and if $\Lambda_{\CC}(\psi^{-1}(B)(\lambda))= ( m_1,\ldots , m_k; q_1, \ldots , q_k; s )$, where $\Lambda_{\CC}$ is defined  as in \eqref{Lambda} for a matrix in $M_{\lambda}( r, \CC)$, property (iii) implies that $\Lambda(B(\lambda))= ( m_1,\ldots , m_k; 2q_1, \ldots , 2q_k; 2s )$ for any real eigenvalue $\lambda$. Therefore, condition \eqref{ast} holds.

 For the converse, suppose that $B\in M(2m, \RR)$ satisfies condition  \eqref{ast} and let $$\text {spec} (B)= \{\lambda_1 , \ldots , \lambda _{\ell}, \lambda _{\ell +1}, \overline{\lambda} _{\ell +1}, \ldots, \lambda _s, \overline{\lambda }_s\},$$ where $\lambda_j\in \RR$, for $j=1,\ldots,\ell$, and $\lambda_j \in \CC$  with $\text{Im}(\lambda_j)>0$, for $j=\ell +1,\ldots,s$. The matrix $B$ is conjugate to $B(\lambda_1)\oplus \ldots \oplus B(\lambda_s)$ (see \eqref{Cdecomposition}). By hypothesis, $B(\lambda_j)\in M(m_{\lambda_j}, \RR)$ with $m_{\lambda_j}=2r_j$, for any $j=1, \ldots, \ell$.  Now, for each $j=\ell + 1, \ldots, s$, let $n_j\in \NN$ such that $B(\lambda_j)\in M( 2n_j, \RR)$.  By reordering the Jordan blocks in each $B(\lambda_j)$, for all $j$ such that $\lambda_j\in \RR$, we obtain that $B(\lambda_1)\oplus \ldots \oplus B(\lambda_s)$ is conjugate to a matrix in $M_J (2m, \RR)$ where $J=J_{r_1} \oplus \ldots \oplus J_{r_{\ell}}\oplus  J_1^{\oplus n}$ (see \eqref{eq:Jm} for the definition of the complex structures) with $n=n_{\ell +1} + \ldots + n_{s}$, and the converse follows.

 Finally, decompose $B$  as $B=B_s+B_n$ with $B_s$ semisimple and $B_n$ nilpotent. It is easily verified that if $B$ satisfies condition \eqref{ast}, then $B_n$ and $B_s$ also satisfy this condition, and the last assertion follows.
\end{proof}

\subsection{The  nilpotent case}\label{sec:nilp-complex}

In the next proposition we exhibit a relationship between the dimension of a nilpotent almost abelian Lie algebra $\ggo_A$ and its step of nilpotency when $\ggo_A$ admits a complex structure.

\begin{proposition} \label{prop:nilp-no-complex}
Let $\ggo_A=\RR e_0 \ltimes_A \RR^{2n-1} $ be a $k$-step nilpotent  almost abelian  Lie algebra. If $\ggo_A$  admits  a complex structure, then   $k\leq n$.       
\end{proposition}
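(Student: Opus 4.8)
The plan is to translate the hypothesis into a statement about the nilpotency index of $A$, and then exploit the block structure forced by the complex structure together with the identification $M_J(2(n-1),\RR)\cong M(n-1,\CC)$. First I would pin down what $k$-step nilpotent means for $\ggo_A$. Writing the descending central series $\ggo^1=\ggo_A$, $\ggo^{i+1}=[\ggo_A,\ggo^i]$, a direct computation using $[e_0,v]=Av$ together with the fact that $\RR^{2n-1}$ is abelian gives $\ggo^{i+1}=\Im A^i$ for all $i\geq 1$. Hence $\ggo_A$ is $k$-step nilpotent precisely when $A^k=0$ and $A^{k-1}\neq 0$; that is, $k$ is the nilpotency index of $A$ (equivalently, the size of its largest Jordan block). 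Since the nilpotency index is invariant under conjugation and under multiplication by a nonzero scalar, it suffices to bound the nilpotency index of any matrix conjugate to a nonzero multiple of $A$.

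Next I would invoke Proposition \ref{complex} and Remark \ref{rem:complex}: the complex structure $j$ produces a basis in which $\ad_{e_0}|_\hg$ has matrix $C=\minimatrix{a}{0}{v}{B}$ with $a\in\RR$, $v\in\RR^{2(n-1)}$ and $B\in M_J(2(n-1),\RR)$, and this $C$ is conjugate to a nonzero multiple of $A$ (using that $\hg$ is abelian, so changing $e_0$ within its coset only rescales $\ad_{e_0}|_\hg$), hence nilpotent. Nilpotency forces $a=0$, since $a$ is the only eigenvalue in the top-left block, and it makes $B$ nilpotent. The key input is now the bound on the nilpotency index of $B$: since $B\in M_J(2(n-1),\RR)\cong M(n-1,\CC)$, Remark \ref{rem:k-step} shows that $B$ has the same minimal polynomial as a nilpotent matrix in $M(n-1,\CC)$, namely $x^{k'}$ with $k'\leq n-1$. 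Therefore $B^{n-1}=0$.

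Finally, using the power formula $C^j=\minimatrix{0}{0}{B^{j-1}v}{B^j}$ (exactly as in the proof of Lemma \ref{v+w}), I would compute $C^n=\minimatrix{0}{0}{B^{n-1}v}{B^n}=0$, since $B^{n-1}=0$ kills both nonzero blocks. Thus the nilpotency index of $C$ is at most $n$, and by the first paragraph $k\leq n$, as claimed. I expect the only genuinely substantive step to be the bound $B^{n-1}=0$: this is precisely where the complex structure enters, since without the constraint $B\in M_J(2(n-1),\RR)$ one could only assert $B^{2(n-1)}=0$ for a general nilpotent $(2n-2)\times(2n-2)$ matrix, yielding merely the weaker bound $k\leq 2n-1$. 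The reduction to a complex matrix of half the size is what sharpens this to $k\leq n$.
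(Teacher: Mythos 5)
Your proof is correct and follows essentially the same route as the paper: reduce via Remark \ref{rem:complex} to $C=\minimatrix{0}{0}{v}{B}$ with $B\in M_J(2(n-1),\RR)\cong M(n-1,\CC)$, then invoke the minimal-polynomial bound of Remark \ref{rem:k-step} for nilpotent complex $(n-1)\times(n-1)$ matrices. The only cosmetic difference is that you deduce $B^{n-1}=0$ once and conclude $C^n=0$ directly, whereas the paper argues from $C^{k-1}\neq 0$ and splits into the cases $B^{k-1}\neq 0$ and $B^{k-1}=0$; your version avoids that case analysis but rests on exactly the same ingredients.
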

\begin{proof}  If $\ggo_A$ admits a complex structure, then $A$ is conjugate to a matrix $C$ of the form \eqref{eq:complex-abelian} with $a=0$ and   $ J=J_{n-1}$ as in \eqref{eq:Jm} (see Remark \ref{rem:nilp-similiar} and Remark \ref{rem:complex}). In particular, $B\in  M_{J_{n-1}}(2(n-1), \RR)=\Im\,\varphi$, 
  with  $\varphi: M(n-1,\CC)\to M_{J_{n-1}}(2(n-1), \RR)$ as in \eqref{eq:alg-iso}.  Since $\ggo_A$ is $k$-step nilpotent, $C^k=0$ and $C^{k-1}\neq 0$, which implies that $B^k=0$.  
 
 If $B^{k-1} \neq 0$, then  
 $\varphi ^{-1}(B)\in M (n-1, \CC )$ satisfies $(\varphi ^{-1}(B))^k=0$ and $(\varphi ^{-1}(B))^{k-1}\neq 0$, that is, the minimal polynomial of $\varphi ^{-1}(B)$ is $x^k$. Therefore, $k\leq n-1$.

 If $B^{k-1} = 0$, then $B^{k-2} \neq 0$ since $ C^{k-1}\neq 0$. In this case we have that 
 $(\varphi ^{-1}(B))^{k-1}= 0$ and  $(\varphi ^{-1}(B))^{k-2}\neq 0$. Then, the minimal polynomial of $\varphi ^{-1}(B)$ is $x^{k-1}$,  hence $k-1\leq n-1$, and the corollary follows.   
\end{proof}
When $\ggo_A=\RR e_0  \ltimes_A \RR^{2n-1} $ is $2$-step nilpotent almost abelian, then $\ggo_A$  always admits a complex structure, as the next result shows. 
\begin{proposition}\label{prop:2-step-complex} Let $\ggo_A=\RR e_0 \ltimes_A \RR^{2n-1}$ be a $2$-step nilpotent  almost abelian Lie algebra. Then $\ggo_A$ admits a complex structure. 
\end{proposition}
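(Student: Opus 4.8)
The plan is to work at the level of conjugacy classes of matrices and to realize the Jordan type of $A$ by a matrix of the shape \eqref{eq:complex-abelian}. Since $\ggo_A$ is $2$-step nilpotent, its structure matrix satisfies $A^2=0$ and $A\neq 0$, so by the Jordan normal form $A$ is conjugate to $\J_2^{\oplus p}\oplus 0_s$ for some $p\geq 1$ with $2p+s=2n-1$; in particular $s$ is odd, hence $s\geq 1$. By the converse part of Remark \ref{rem:complex} it is enough to exhibit a matrix $C=\minimatrix{0}{0}{v}{B}$ that is conjugate to $A$, with $v\in\RR^{2(n-1)}$ and $B\in M_{J}(2(n-1),\RR)$ for some $J$ with $J^2=-I_{2(n-1)}$: any such $C$ has the form \eqref{eq:complex-abelian} (with $a=0$), so $\ggo_C$ carries a complex structure, while $\ggo_C\cong\ggo_A$ by Remark \ref{rem:nilp-similiar} because $C$ and $A$ are conjugate nilpotent matrices.

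The main difficulty is a parity constraint. By Proposition \ref{solv-complex2n}, a nilpotent matrix is conjugate to one lying in some $M_J(2(n-1),\RR)$ exactly when condition \eqref{ast} holds at the eigenvalue $0$, i.e. when every multiplicity $p_i$ and the size $t$ of the zero part in its tuple $\Lambda$ are even; in our situation this forces the number of $\J_2$-blocks of $B$ to be even. Thus, when $p$ is odd, one cannot take $B$ to carry the full Jordan type of $A$. The way around this is to exploit the freedom in $v$: according to Proposition \ref{(v,B)}, passing from $B$ to $C=\minimatrix{0}{0}{v}{B}$ can create one extra $\J_2$-block, which supplies the missing (odd) block.

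Concretely I would split on the parity of $p$ and choose $B_0\in M(2(n-1),\RR)$ in block form satisfying \eqref{ast}. If $p$ is even, take $B_0=\J_2^{\oplus p}\oplus 0_{s-1}$ (both $p$ and $s-1$ even) and $v_0\in\Im B_0$; then Proposition \ref{(v,B)}$\raa$ gives $\minimatrix{0}{0}{v_0}{B_0}\sim 0_1\oplus B_0\sim\J_2^{\oplus p}\oplus 0_s\sim A$. If $p\geq 3$ is odd, take $B_0=\J_2^{\oplus(p-1)}\oplus 0_{s+1}$ (both $p-1$ and $s+1$ even) and $v_0$ a nonzero vector in the span of the canonical vectors of the $0_{s+1}$-block; then Proposition \ref{(v,B)}$\rcc$ gives $\minimatrix{0}{0}{v_0}{B_0}\sim\J_2^{\oplus(p-1)}\oplus\J_2\oplus 0_s\sim A$. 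If $p=1$, take $B_0=0_{2(n-1)}$ and any $v_0\neq 0$, and the final assertion of Proposition \ref{(v,B)} gives $\minimatrix{0}{0}{v_0}{B_0}\sim\J_2\oplus 0_{2(n-1)-1}=\J_2\oplus 0_s\sim A$. In all three cases $B_0$ has even multiplicities and even zero part, so it satisfies \eqref{ast}, and $\dim B_0=2(n-1)$.

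To conclude, I apply Proposition \ref{solv-complex2n} once more to write $B_0=P\tilde B P^{-1}$ with $\tilde B\in M_{J'}(2(n-1),\RR)$ and $(J')^2=-I_{2(n-1)}$, and then conjugate $C_0=\minimatrix{0}{0}{v_0}{B_0}$ by $\diag(1,P)$. This produces $\tilde C=\minimatrix{0}{0}{P^{-1}v_0}{\tilde B}$, which is conjugate to $C_0$ and hence to $A$, and which has exactly the form \eqref{eq:complex-abelian} with $a=0$ and $\tilde B\in M_{J'}$. By Remark \ref{rem:complex}, $\ggo_{\tilde C}$ admits a complex structure, and by Remark \ref{rem:nilp-similiar}, $\ggo_{\tilde C}\cong\ggo_A$; therefore $\ggo_A$ admits a complex structure. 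The only point requiring care is the bookkeeping ensuring that in each case the chosen $v_0$ lands in the branch of Proposition \ref{(v,B)} producing the desired $\Lambda$, which is precisely the parity argument described above.
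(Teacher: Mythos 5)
Your proof is correct and follows essentially the same route as the paper: both split on the parity of the number $p$ of $\J_2$-blocks, realize $A$ as a matrix of the form \eqref{eq:complex-abelian} whose lower-right block $B$ has even block multiplicities and even zero part, and in the odd case use the vector $v$ (equivalently, a kernel vector as in Proposition \ref{(v,B)}) to absorb the extra $\J_2$-block. The only difference is presentational: the paper exhibits an explicit $J$ commuting with $B$ in each case, whereas you invoke Proposition \ref{solv-complex2n} and then transfer the form via conjugation by $\diag(1,P)$, which is equally valid.
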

\begin{proof} Since $\ggo_A$ is $2$-step nilpotent, the Jordan normal form of  $A$ is  $\J_2  ^{ \oplus p}\oplus 0_s $, for $p,\, s>0$.
It follows that $s=2k+1$ with $k= n-p-1$. 

If $p$ is even, $p=2l$, then 
by a change of basis, we obtain that $A$ is conjugate to $C= 0_1\oplus \J_2 ^{\oplus p}\oplus 0_{2k}$, and clearly this has the form \eqref{eq:complex-abelian} with $a=0$, $v=0$ and $B=\J_2 ^{\oplus p}\oplus 0_{2k} \in M_{ J}(2(n-1), \RR)$, for $J= J_{2l} \oplus \tilde{J}$, where $\tilde{J}$ is an arbitrary  $2k \times 2k$ matrix such that $\tilde{J}^2=-I_{2k}$.

If $p$ is odd, $p=2l+1$, then 
by a change of basis, we obtain that $A$ is conjugate to $C= \J_2 \oplus 0_{2k+1}\oplus \J_2 ^{\oplus (p-1)}$ which has the form \eqref{eq:complex-abelian}.  In fact:
\[ 
C = \begin{pmatrix}
    0  & 0 \\
      \epsilon_1   & B
\end{pmatrix},
\] with $B= 0_{2k+2}\oplus \J_2 ^{\oplus (p-1)} \in M_{ J}(2(n-1), \RR)$, for $J= \tilde{J} \oplus J_{2l}$, 
where $\tilde{J}$ is an arbitrary  $2(k+1) \times 2(k+1)$ matrix such that $\tilde{J}^2=-I_{2(k+1)}$.
\end{proof}
The above proposition does not hold for  general  $3$-step nilpotent almost abelian Lie algebras, as the next example shows. 
\begin{example} Let $\ggo_M=\RR e_0\ltimes_M \RR^5$ with $M= \J_3\oplus 0_2$. This Lie algebra was denoted by $(0,0,0,0,12,15)$ in \cite{Sal}, where it was shown that it  admits no complex structure.    
\end{example}

However, under an additional assumption, we are able to prove the next result.

\begin{proposition}\label{prop:complex-3step}  Let $\ggo_A=\RR e_0 \ltimes_A \RR^{2n-1}$ be a $3$-step nilpotent  almost abelian Lie algebra and set $p_1:=(2n-1)-d_2, \; p_2:= 2d_2-d_1-(2n-1),  $ where $d_\ell=\dim( \ker A^\ell), \, \ell=1,2$. Assume that 
 any of the following conditions is satisfied: 
 \begin{enumerate}
 \item[$\ri$] $p_2$ is odd, 
 \item[$\rii$] $p_1$ and $p_2 $ are both  even. 
 \end{enumerate}
 Then $\ggo_A$ admits a complex structure.     
\end{proposition}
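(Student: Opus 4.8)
The plan is to reduce everything to the Jordan type of $A$ and then combine Proposition \ref{(v,B)} (via Corollary \ref{posibles_C}) with Proposition \ref{solv-complex2n}. Since $\ggo_A$ is $3$-step nilpotent we have $A^3=0\neq A^2$, so the Jordan normal form of $A$ is $\J_3^{\oplus p_1}\oplus \J_2^{\oplus p_2}\oplus 0_s$ with $p_1\geq 1$ and $s=(2n-1)-3p_1-2p_2\geq 0$. Counting kernels gives $d_1=\dim\ker A=p_1+p_2+s$ and $d_2=\dim\ker A^2=2p_1+2p_2+s$, whence $(2n-1)-d_2=p_1$ and $2d_2-d_1-(2n-1)=p_2$; thus the integers in the statement are precisely the numbers of $\J_3$ and $\J_2$ blocks, and the notation is consistent. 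I would also record that, since $3p_1+2p_2+s=2n-1$ is odd, the sum $p_1+s$ is odd.

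By Remark \ref{rem:nilp-similiar} it suffices to work up to conjugation, and by Remark \ref{rem:complex} the algebra $\ggo_A$ admits a complex structure as soon as $A$ is conjugate to a matrix of the form \eqref{eq:complex-abelian} with $a=0$ and $B\in M_J(2(n-1),\RR)$ for some $J$ with $J^2=-I_{2(n-1)}$. The idea is to prescribe the Jordan type of $B$ so that, on one hand, every block multiplicity and the number of zero blocks are even — this is exactly condition \eqref{ast}, so that Proposition \ref{solv-complex2n} lets $B$ be realized inside $M_{J_{n-1}}(2(n-1),\RR)$ — and, on the other hand, one of the block moves of Proposition \ref{(v,B)} carries $B$ to $A$. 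I would split into three cases by parity, each time using that $p_1+s$ is odd. If $p_1$ and $p_2$ are both even (hypothesis $\rii$), then $s$ is odd; I take $B=\J_3^{\oplus p_1}\oplus \J_2^{\oplus p_2}\oplus 0_{s-1}$ and recover $A$ by adjoining a zero block (move $\raa$ of Proposition \ref{(v,B)}). If $p_2$ is odd and $p_1$ is even, then $s$ is odd; I take $B=\J_3^{\oplus p_1}\oplus \J_2^{\oplus (p_2-1)}\oplus 0_{s+1}$ and recover $A$ by converting a zero block into a $\J_2$ block (move $\rcc$). If $p_2$ and $p_1$ are both odd, then $s$ is even; I take $B=\J_3^{\oplus (p_1-1)}\oplus \J_2^{\oplus (p_2+1)}\oplus 0_s$ and recover $A$ by enlarging one $\J_2$ block to a $\J_3$ block (move $\rbb$). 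In all three cases the block multiplicities and the number of zero blocks of $B$ are even, so \eqref{ast} holds, and the three cases exhaust the parities allowed by $\ri$, $\rii$ and $p_1+s$ odd.

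It remains to glue the two ingredients. Once the Jordan type of $B$ is fixed, Proposition \ref{solv-complex2n} supplies a matrix $B\in M_{J_{n-1}}(2(n-1),\RR)$ of that type; write $B=PB_0P^{-1}$ with $B_0$ in Jordan form. Applying Proposition \ref{(v,B)} to $B_0$ and choosing $v_0$ to trigger the relevant move (respectively $v_0\in\Im B_0$; $v_0$ a zero-block basis vector outside $\Im B_0$; $v_0$ with a nonzero entry on the top vector of a $\J_2$ block and zero entries on the $\J_3$ tops), the matrix $\left(\begin{smallmatrix} 0 & 0 \\ v_0 & B_0\end{smallmatrix}\right)$ is conjugate to $A$. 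Conjugating by $\left(\begin{smallmatrix} 1 & 0 \\ 0 & P\end{smallmatrix}\right)$ then shows that $\left(\begin{smallmatrix} 0 & 0 \\ Pv_0 & B\end{smallmatrix}\right)$ is conjugate to $A$ as well, and with $v=Pv_0$ this has the form \eqref{eq:complex-abelian} with $a=0$ and $B\in M_{J_{n-1}}$. Remark \ref{rem:complex} then furnishes a complex structure on $\ggo_A$.

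The only genuine work is the bookkeeping: checking that $\ri$, $\rii$ and $p_1+s$ odd indeed exhaust the three admissible parity patterns, and that each target type sits in the image of the corresponding move of Proposition \ref{(v,B)} (in particular move $\rcc$ requires a spare zero block, guaranteed since the relevant $B$ has $s+1>0$ of them, and move $\rbb$ requires a $\J_2$ block to enlarge, present since that $B$ has $p_2+1\geq 1$ of them). I expect no difficulty beyond this verification; the single conceptual point — reconciling the parity (``doubling'') forced by \eqref{ast} with the block moves — is precisely what the three-case split is designed to resolve.
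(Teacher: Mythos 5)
Your proposal is correct and follows essentially the same route as the paper: the same identification of $p_1,p_2$ with the numbers of $\J_3$ and $\J_2$ blocks, the same three parity cases governed by $p_1+s$ odd, and the same block moves to realize $A$ in the form \eqref{eq:complex-abelian} with a $B$ whose multiplicities are all even. The only (cosmetic) difference is that the paper writes the rearranged Jordan form explicitly as $\left(\begin{smallmatrix} 0 & 0 \\ \epsilon_1 & B\end{smallmatrix}\right)$ with an explicit complex structure $J$ built from $J_2,J_3$ blocks, whereas you invoke Proposition \ref{(v,B)} and Proposition \ref{solv-complex2n} abstractly and glue via the conjugation $\left(\begin{smallmatrix} 1 & 0 \\ 0 & P\end{smallmatrix}\right)$.
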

\begin{proof}
 Since $\ggo_A$ is $3$-step nilpotent, the Jordan normal form of $A$ is $\J_3^{\oplus q_1}\oplus \J_2^{\oplus q_2}\oplus 0_s$ with $q_1>0, \, q_2,\,  s\geq 0$. We show next that $q_\ell=p_\ell, \, \ell =1,2$. In fact, it follows that 
 \[ 2n-1=3q_1+2q_2+s, \qquad d_1=q_1+q_2+s, \qquad d_2=2(q_1+q_2)+s, 
 \]
 from which we get
 \[ (2n-1)-d_1=2q_1+q_2, \qquad (2n-1)-d_2=q_1.
 \]
 Therefore,
 \[
 q_1= p_1, \qquad q_2=(2n-1)-d_1-2q_1=2d_2-d_1-(2n-1)=p_2 , 
 \]
 as  asserted. 
 
 Assume that $\ri$ holds, that is,  $p_2=2r +1$ is odd. We consider separately the cases $p_1$ even or $p_1$ odd.

 If $p_1=2k+1$ is odd, then $s$ is even and $A$ is conjugate to $C=\J_3\oplus\J_2 \oplus \J_2^{\oplus (2r)}\oplus\J_3^{\oplus (2k)}\oplus 0_s $, which has the form \eqref{eq:complex-abelian}. In fact:
 \[ C = \begin{pmatrix}
    0  & 0 \\
      \epsilon_1   & B
\end{pmatrix},
\] with $B=  \J_2 ^{\oplus (2r+2)}\oplus \J_3^{\oplus (2k)}\oplus 0_s \in M_{ J}(2(n-1), \RR)$, for $J= J_2^{\oplus (r+1)}\oplus J_3^{\oplus k}\oplus \tilde J$, where $\tilde J$ is an arbitrary $s\times s$ matrix such that $(\tilde J)^2=-I_{s}$.

 If $p_1=2k$ is even, then $s$ is odd and $A$ is conjugate to $C=\J_2\oplus 0_{s}\oplus \J_3^{\oplus (2k)}\oplus \J_2^{\oplus (2r)} $, which has the form \eqref{eq:complex-abelian}. In fact:
 \[ C = \begin{pmatrix}
    0  & 0 \\
      \epsilon_1   & B
\end{pmatrix},
\] with $B= 0_{s+1}\oplus  \J_3^{\oplus (2k)}\oplus \J_2^{\oplus (2r)}   \in M_{ J}(2(n-1), \RR)$, for $J=\tilde J\oplus J_3^{\oplus k} \oplus J_2^{\oplus r}$  
where $\tilde J$ is an arbitrary $(s+1)\times (s+1)$ matrix such that $(\tilde J)^2=-I_{(s+1)}$.

 If $\rii$ is satisfied, that is, $p_1$ and $p_2$ are both even, then $s\geq 1$ is odd.  Let $p_1=2k$, $p_2=2r$, then $A$ is conjugate to 
 \[C= \begin{pmatrix}
    0  & 0 \\
      0   & B
\end{pmatrix},\]
 with $B= \J_3^{\oplus (2k)}\oplus \J_2^{\oplus (2r)} \oplus 0_{s-1}  \in M_{ J}(2(n-1), \RR)$, for $J= J_3^{\oplus k} \oplus J_2^{\oplus r} \oplus\tilde J$  
where $\tilde J$ is an arbitrary $(s-1)\times (s-1)$ matrix such that $(\tilde J)^2=-I_{(s-1)}$.
\end{proof}
As a consequence of Theorem \ref{classifcomplex} below it will turn out that  the converse of the above proposition holds.

Given a nilpotent matrix $M$, in the following theorem we give necessary and sufficient conditions on the tuple $\Lambda (M)$ to obtain pairwise non-isomorphic nilpotent almost abelian Lie algebras $\ggo_M$ admitting complex structures (compare with~\cite[~Appendix A]{AB}). We exclude the zero matrix $M$, since when $M=0$, $\ggo_M$ is the abelian Lie algebra, which always admits a complex structure. 

\begin{theorem}\label{classifcomplex}
Let $\ggo_M=\RR e_0  \ltimes_M \RR^{2n-1}$ be the   $2n$-dimensional  nilpotent almost abelian Lie algebra with $\Lambda(M)=(n_1,\ldots, n_k; p_1, \ldots, p_k;t)$. Then  $\ggo_M$ admits a complex structure if and only if one of the following conditions holds:
\begin{enumerate} 
\item[$\ri$] $t$ is odd and $p_i$ is even for all $i=1,\dots,k$,
\item[$\rii$] $t$ is odd, $p_i$ is even for all $i=1, \dots,k-1$, $n_k=2$ and $p_k$ is odd,
\item[$\riii$] $t$ is even, $k\geq 2$, there exists $2\leq \ell \leq k$ such that $n_{\ell-1}=n_{\ell} +1$, $p_{\ell-1}$ and $p_{\ell}$ are odd, and $p_i$ is even for all $i\notin \{\ell-1,\ell\}$.
\end{enumerate}
\end{theorem}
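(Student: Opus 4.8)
The plan is to combine the characterization of almost abelian Lie algebras carrying a complex structure with the combinatorial description of Jordan types in Corollary \ref{posibles_C}. By Remarks \ref{rem:nilp-similiar} and \ref{rem:complex}, $\ggo_M$ admits a complex structure if and only if $M$ is conjugate to a nilpotent matrix of the form \eqref{eq:complex-abelian} with $a=0$, that is, to some $C=\begin{pmatrix} 0 & 0 \\ v & B\end{pmatrix}$ with $v\in\RR^{2(n-1)}$ and $B$ nilpotent lying in some $M_J(2(n-1),\RR)$, $J^2=-I_{2(n-1)}$. First I would record, applying Proposition \ref{solv-complex2n} to the unique eigenvalue $0$ of the nilpotent $B$, the reformulation I will use throughout: a nilpotent $B$ is conjugate into some $M_J$ exactly when its type $\Lambda(B)=(m_1,\ldots,m_r;q_1,\ldots,q_r;s)$ has all multiplicities $q_i$ and the nullity $s$ even; call this condition $(\star)$. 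Equivalently, such a $B$ is conjugate to $\varphi(\tilde B)$ for a complex nilpotent $\tilde B$, with $\varphi$ as in \eqref{eq:alg-iso}, so $B$ may be chosen inside $M_{J_{n-1}}$ without altering its conjugacy class.

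The next step is to reduce the problem to pure combinatorics. Since conjugating $B$ into Jordan form merely translates the vector $v$, as $v$ ranges over $\RR^{2(n-1)}$ with $B$ in a fixed $M_J$ the achievable tuples $\Lambda(C)$ are precisely those produced from $\Lambda(B)$ by the moves of Corollary \ref{posibles_C}: move $\raa$ increments the nullity, move $\rbb$ enlarges one Jordan block by one, move $\rcc$ converts a zero block into a $\J_2$ block, together with the degenerate case $B=0$, $v\neq 0$ giving $\Lambda(C)=(2;1;2n-3)$. Hence $\ggo_M$ admits a complex structure if and only if $\Lambda(M)$ arises from some tuple satisfying $(\star)$ by one of these moves.

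For the sufficiency direction I would exhibit, for each of $\ri$, $\rii$, $\riii$, a tuple $\Lambda(B)$ satisfying $(\star)$ and the move realizing $\Lambda(M)$. Condition $\ri$ is obtained from move $\raa$ applied to $\Lambda(B)=(n_1,\ldots,n_k;p_1,\ldots,p_k;t-1)$. Condition $\rii$ is obtained from move $\rcc$ applied to the tuple with the same large blocks, size-$2$ multiplicity $p_k-1$ and nullity $t+1$ (and from the degenerate case $B=0$ precisely when $k=1$ and $p_1=1$). Condition $\riii$ is obtained from move $\rbb$, enlarging a size-$n_\ell$ block, starting from the tuple in which the multiplicity of $n_{\ell-1}$ is $p_{\ell-1}-1$ and that of $n_\ell$ is $p_\ell+1$. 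In every case a one-line parity check verifies $(\star)$ and a dimension count gives $\dim B=2n-2$; the conclusion then follows from the converse construction in Remark \ref{rem:complex}, since $\Lambda(C)=\Lambda(M)$ forces $C$ and $M$ to be conjugate. For necessity I would run this backwards: feeding an arbitrary tuple satisfying $(\star)$ into each move, a short parity bookkeeping shows that move $\raa$ lands in $\ri$, both sub-cases of move $\rbb$ land in $\riii$, and move $\rcc$ together with the degenerate $B=0$ case land in $\rii$, exhausting all possibilities.

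The main obstacle is the bookkeeping around the two sub-cases of move $\rbb$ and the degenerate situations in which a multiplicity in $\Lambda(B)$ drops to zero — so that a block size disappears and reappears after the move — or in which $B$ vanishes entirely; these must be matched carefully against the boundary cases $k=1$, $\ell=2$ and $p_{\ell-1}=1$ in the statement. Once the reformulation $(\star)$ is established, however, each of the remaining verifications is a finite, routine parity-and-dimension computation.
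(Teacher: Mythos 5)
Your proposal is correct and takes essentially the same route as the paper's proof: both reduce via Remark \ref{rem:complex} to matrices of the form \eqref{eq:complex-abelian} with $a=0$, apply Proposition \ref{solv-complex2n} to get the parity condition (your $(\star)$) on $\Lambda(B)$, and then run Corollary \ref{posibles_C} forwards for necessity (cases \raa, \rbb, \rcc\ plus the degenerate $B=0$ case landing in \ri, \riii, \rii\ respectively) and backwards for sufficiency, with exactly the same choices of pre-move tuple that the paper writes out as explicit block matrices. The "moves on tuples" phrasing is only a repackaging of the paper's constructions, not a different argument.
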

\begin{proof}
Assume that $\mathfrak{g}_M $ admits a complex structure, then by Remark \ref{rem:complex} and Remark \ref{rem_complex-conj}, $M$
is conjugate to a matrix 
$$
 C= \begin{pmatrix} 
0 & 0 \\
v & B
\end{pmatrix},  \text{ for some } v \in \RR^{2(n-1)} \text{ and } B \in M_{J_{n-1}}(2(n-1) , \RR). 
 $$
Then, either $B=0$ and $v\neq 0$, or $B\neq 0$. If $B=0$, Corollary \ref{posibles_C} implies that $C$, hence $M$, is conjugate to $\J_2\oplus 0_{2n-3}$. Therefore, $M$ satisfies condition (ii).

If $B\neq 0$, by Proposition \ref{solv-complex2n} $B$ is conjugate to 
$B ^\prime =  \J_{m_1}^{\oplus 2q_1}  \oplus \cdots \oplus \J_{m_r}^{\oplus 2q_r} \oplus  0_{2s}$, where $m_1> \dots > m_r\geq 2$. 
 Hence, $M$ is conjugate to 
 $$
 C^\prime=  \begin{pmatrix} 
0 & 0 \\
v^\prime & B^\prime
\end{pmatrix},  \text{ for some } v^\prime  \in \RR^{2(n-1)}. 
 $$
 Now, according to Corollary \ref{posibles_C}, we have the following three possibilities: 
 
 (a) $M$ is conjugate to $0_1 \oplus B^\prime$. That is, 
 $$
 \J_{n_1}^{\oplus p_1}  \oplus \cdots \oplus \J_{n_k}^{\oplus p_k} \oplus  0_{t}\; \text{ is conjugate to }\; 
 \J_{m_1}^{\oplus 2q_1}  \oplus \cdots \oplus \J_{m_r}^{\oplus 2q_r} \oplus  0_{2s+1}.
$$
 Hence, by the uniqueness, up to reordering, of the Jordan normal form, it follows that  
 $k=r$, $n_i= m_i$ and $p_i= 2q_i$, for all $i \in \{ 1, \dots, k\}$ and $t= 2s+1$.  Thereby, condition (i) of the theorem is satisfied. 
 
(b) $M$ is conjugate to 
$$
\J_{m_1}^{\oplus 2q_1}  \oplus \cdots \oplus 
\J_{m_{d-1}}^{\oplus 2q_{d-1} }\oplus  \J_{m_d+1} \oplus 
\J_{m_{d}}^{\oplus 2q_d-1}
\oplus \dots \oplus 
\J_{m_r}^{\oplus 2 q_r} \oplus  0_{2s},
$$
 for some $d \in \{ 1, \dots, k \}$. Notice that $m_{d-1} \geq m_d+1 > m_d$. For both cases, $m_{d-1}= m_d+1$ or $m_{d-1}> m_d+1$, condition $\riii$ of the theorem is satisfied. 
 
 (c)  $M$ is conjugate to  
  $$\J_{m_1}^{\oplus 2q_1}  \oplus \cdots \oplus \J_{m_r}^{\oplus 2q_r} 
  \oplus \J_2 
  \oplus  0_{2s-1}.
 $$
 That is, 
 $$
  \J_{n_1}^{\oplus p_1}  \oplus \cdots \oplus \J_{n_k}^{\oplus p_k} \oplus  0_{t}\; \text{ is conjugate to }\;
 \J_{m_1}^{\oplus 2q_1}  \oplus \cdots \oplus \J_{2}^{\oplus 2q_{r^\prime} +1} 
  \oplus  0_{2s-1}.
 $$
 where $r^\prime = r $ if $m_r=2$, and $q_{r^\prime}=0$ otherwise. Finally, by the uniqueness, up to reordering, of the Jordan normal form,  condition $\rii$ in the theorem is satisfied. 

For the converse, by Remark \ref{rem:complex} it is enough to show (case by case)  that $M$ is conjugate to a matrix $C$ as in ~\eqref{eq:complex-abelian}. 

$\ri$ Assume that $t=2s+1$ and $p_i= 2 q_i$, for $ i = 1, \dots, k $. 
Then, $M$ is conjugate to
$C = 0_1 \oplus B$ where $
B=  \J _{n_1}^{\oplus p_1}  \oplus \cdots \oplus \J _{n_k}^{\oplus p_k} \oplus  0_{2s} $.  Since  $B$   satisfies  the conditions of Proposition \ref{solv-complex2n}, $B$ is conjugate to a matrix in $M_{J_{n-1}}(2(n-1), \RR)$.
 
 $\rii$  Analogous to the previous case, assume that 
 $t=2s+1$,  $p_i= 2 q_i$, for $i = 1, \dots, k-1$, and $n_k=2$ with $p_k= 2q_k+1$. Then, $M$ is conjugate to 
 $$
 C=\J_2  \oplus 0_{2s+1}  \oplus \J_{n_1}^{\oplus 2q_1}  \oplus \cdots \oplus \J_{2}^{\oplus 2q_k}
 $$ 
 and it is of the form
 $$
    C= \begin{pmatrix}
        0 & 0\\
        \epsilon_1 & B
    \end{pmatrix}, 
   \quad \text{ where } \quad 
   B= 0_{2(s+1)}\oplus \J _{n_1}^{\oplus 2q_1}  \oplus \cdots \oplus \J _{2}^{\oplus 2q_k} \; \text{or} \; B=0_{2(n-1)}.
 $$
Again, the matrix $B$ satisfies  the conditions of Proposition \ref{solv-complex2n}, so it  is conjugate to a matrix in
$M_{J_{n-1}}(2(n-1), \RR)$.
 
$\riii$ Assume that $t=2s$ and let $\ell $  be as in the hypothesis. Then, 
$p_i= 2 q_i$, for $ i \notin \{\ell -1, \ell \}$,  
$p_{\ell-1 } = 2 q_{\ell -1}+1$ and
 $p_{\ell} = 2 q_{\ell}+1$.  Then $M$ is conjugate to the matrix
 
  $$
    C= \begin{pmatrix}
        0 & 0\\
        \epsilon_1 & B
    \end{pmatrix},
  $$
  where 
 $$B=
  \J _{n_{\ell}}^{\oplus 2(q_{\ell}+1)}
  \oplus \J _{n_1}^{\oplus 2q_1} 
     \oplus \cdots \oplus   
     \J _{n_{\ell -1}}
     ^{\oplus 2 q_{\ell -1} } \oplus  
     \J _{n_{\ell +1}} ^{\oplus 2q_{\ell +1} }
     \oplus  \dots  \oplus 
    \J _{n_k}^{\oplus 2q_k}  \oplus   0_{2s}.
 $$
Again, the matrix $B$ also satisfies  the conditions of Proposition \ref{solv-complex2n}, so it  is conjugate to a matrix in $M_{J_{n-1}}(2(n-1), \RR)$.
\end{proof}

For each $n\in \NN$, let 
\begin{equation*}
\begin{split}  
    {\bigwedge}^c_{2n-1}:&= \{ (n_1,\ldots , n_k; p_1, \ldots , p_k; t ) \in {\bigwedge}_{2n-1}:  \text{satisfying (i), (ii)} \\ & \qquad \text{or (iii) of Theorem \ref{classifcomplex}}\} \cup\{(2n-1)\}.
\end{split} 
\end{equation*}
Consider the bijection  $\tilde{\Lambda}$ 
defined in \eqref{eq:bijection-tuples} for $b=0$ and $m=2n-1$. As a consequence of Lemma \ref{lem:isom-nilpot} and Theorem \ref{classifcomplex},  we obtain the following classification result.

\begin{corollary}\label{cor:classif-complex} 
Let $n\in \NN$ and let $\ggo_M=\RR e_0  \ltimes_M \RR^{2n-1}$ be a    $2n$-dimensional  nilpotent almost abelian Lie algebra. Then $\ggo_M$ admits a complex structure if and only if $\Lambda(M)\in {\bigwedge}^c_{2n-1}$. Moreover, 
the isomorphism classes of $2n$-dimensional   nilpotent almost abelian Lie algebras admitting a complex structure are parametrized by the set $\bigwedge^c_{2n-1}$. \end{corollary}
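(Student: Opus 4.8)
The plan is to deduce this statement directly from the two ingredients already established: the parametrization of isomorphism classes of nilpotent almost abelian Lie algebras by $\bigwedge_{2n-1}$ in Lemma \ref{lem:isom-nilpot}, and the characterization in Theorem \ref{classifcomplex} of which of these admit a complex structure. The whole argument amounts to intersecting the bijection $\tilde{\Lambda}$ of \eqref{eq:bijection-tuples} (taken with $b=0$, $m=2n-1$) with the complex-admissibility condition, so no new computation should be needed.

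First I would establish the ``if and only if'' assertion. Here the one point requiring care is the abelian case: I would observe that $\Lambda(M)=(2n-1)$ holds precisely when $M$ is conjugate to the zero matrix $0_{2n-1}$, in which case $\ggo_M$ is abelian and trivially carries a complex structure; this accounts for the extra element $(2n-1)$ adjoined in the definition of $\bigwedge^c_{2n-1}$. For $M\neq 0$ the tuple $\Lambda(M)$ has the form $(n_1,\ldots,n_k;p_1,\ldots,p_k;t)$, and Theorem \ref{classifcomplex} states that $\ggo_M$ admits a complex structure if and only if $\Lambda(M)$ satisfies one of conditions (i)--(iii), that is, if and only if $\Lambda(M)\in\bigwedge^c_{2n-1}$. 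Combining the two cases yields that $\ggo_M$ admits a complex structure if and only if $\Lambda(M)\in\bigwedge^c_{2n-1}$.

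Next I would obtain the parametrization of isomorphism classes. By Lemma \ref{lem:isom-nilpot}, $\tilde{\Lambda}$ is a bijection from the set of isomorphism classes of $2n$-dimensional nilpotent almost abelian Lie algebras onto $\bigwedge_{2n-1}$; this rests on the fact (Remark \ref{rem:nilp-similiar}) that $\ggo_M\cong\ggo_{M'}$ exactly when $M$ and $M'$ are conjugate, so that $\Lambda$ is a complete invariant of the isomorphism class. Since admitting a complex structure is an isomorphism-invariant property, the equivalence from the previous step shows that the preimage under $\tilde{\Lambda}$ of $\bigwedge^c_{2n-1}\subseteq\bigwedge_{2n-1}$ is exactly the set of isomorphism classes admitting a complex structure. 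Restricting the bijection $\tilde{\Lambda}$ to this subset gives the desired bijection onto $\bigwedge^c_{2n-1}$.

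I do not anticipate a genuine obstacle, since the substantive work is already carried out in Theorem \ref{classifcomplex} and Lemma \ref{lem:isom-nilpot}. The only subtlety to handle cleanly is the bookkeeping of the abelian case: making explicit that $\Lambda(M)=(2n-1)$ corresponds to $M=0$ and that the resulting algebra is complex, so that the adjoined element $(2n-1)$ in $\bigwedge^c_{2n-1}$ is neither omitted nor double-counted.
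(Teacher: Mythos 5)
Your proof is correct and follows essentially the same route as the paper, which presents this corollary as a direct consequence of Lemma \ref{lem:isom-nilpot} and Theorem \ref{classifcomplex} without further argument. Your explicit bookkeeping of the abelian case, where $\Lambda(M)=(2n-1)$ corresponds to $M=0$ and accounts for the adjoined element of $\bigwedge^c_{2n-1}$, matches exactly the convention the paper establishes when it excludes $M=0$ from Theorem \ref{classifcomplex}.
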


\begin{example}\label{ex:complex-8}
    We obtain that  the set  ${\bigwedge}^c_{7}$ consists of the following tuples:
    \[
     (4,3;1,1;0), \, (3;2;1), \, (3,2;1,1;2),\, (2;3;1), \, (2;2;3), \, (2;1;5),\, (7) .
    \]
    Therefore, there are 7 isomorphism classes of 8-dimensional nilpotent almost abelian Lie algebras admitting complex structures. The set $ {\bigwedge}_{7\,} - {\bigwedge}^c_{7}$  contains 8 tuples, that is, there are 8 isomorphism classes of 8-dimensional nilpotent almost abelian Lie algebras not admitting complex structures.
    For instance, since $(7;1;0)\notin {\bigwedge}^c_{7}$, the corresponding nilpotent  almost abelian Lie algebra does not admit a complex structure.
\end{example}

\begin{example}\label{ex:complex-10} The set  ${\bigwedge}^c_{9}$ is given by \[
\begin{array}{llllll}
(5,4;1,1,0),&   (4;2;1),&   (4,3;1,1;2),&   (3,2: 2, 1; 1),& (3; 2; 3),& (3, 2; 1,3; 0), \\
(3, 2 ; 1,1; 4),& (2; 4;1),& (2; 3;3),& (2;2;5),& (2;, 1, 7),& (9).
\end{array}\]
This gives that there are 12 isomorphism classes of 10-dimensional nilpotent almost abelian Lie algebras admitting complex structures. 
It can be shown that the set $ {\bigwedge}_{9\,} - {\bigwedge}^c_{9}$ contains 18 tuples, that is, there are 18 isomorphism classes of 10-dimensional nilpotent almost abelian Lie algebras not admitting complex structures. For instance, 
 $(9;1;0) \notin {\bigwedge}^c_{9}$.
\end{example}

\begin{example} When $M=\J_{2n-1}$, with $n\geq 2$, the Lie algebra $\ggo_M$ is a filiform Lie algebra. Since $(2n-1;1;0)\notin  {\bigwedge}^c_{2n-1}$,  $\ggo_M$ does not admit a complex structure, a fact that was already known  (see \cite{GR}). 
\end{example} 

\subsection{The general case}
We start by proving the following result,  which will be needed in Theorem \ref{complex_M(a)} and Theorem \ref{complex_M(a)+Q}. 
\begin{proposition}\label{odd-space}
Let $M\in M(2n-1, \RR )$ and assume that $\ggo_M$ admits a complex structure, then there exists a unique $a\in \text{spec}_\RR (M)$ such that $m_a$ is odd. In particular, 
      $M$ is conjugate to either $M(a)$  or 
  $M(a)\oplus Q$, where $Q\in M(2n-1-m_a, \RR )  $ and $a\not\in \emph {spec}\, (Q)$, depending on whether  $m_a=2n-1$ or $m_a<2n-1$.
\end{proposition}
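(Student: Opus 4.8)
The plan is to reduce everything to the block‑triangular normal form supplied by the complex structure and then read off the multiplicities of the real eigenvalues.

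First I would invoke Remark \ref{rem:complex} (which rests on Proposition \ref{complex}): since $\ggo_M$ admits a complex structure, there exist $e_0\notin\hg$ and a basis of the abelian ideal $\hg=\RR^{2n-1}$ in which $\ad_{e_0}|_{\hg}$ is represented by
\[
C=\begin{pmatrix} a & 0 \\ v & B \end{pmatrix},\qquad a\in\RR,\ v\in\RR^{2(n-1)},\ B\in M_J(2(n-1),\RR),
\]
with $J^2=-I_{2(n-1)}$. Because $\ggo_M\cong\ggo_C$, Proposition \ref{ad-conjugated} gives that $C$ is conjugate to $cM$ for some $c\in\RR\setminus\{0\}$. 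Multiplication by $c$ is a multiplicity‑preserving bijection from $\text{spec}_\RR(M)$ onto $\text{spec}_\RR(C)$, so it suffices to prove the existence and uniqueness statement for $C$.

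The key step is to control the real eigenvalues of the block $B$. Since $B\in M_J(2(n-1),\RR)$, Proposition \ref{solv-complex2n} shows that $B$ satisfies condition \eqref{ast}; consequently every real eigenvalue of $B$ has even algebraic multiplicity (the multiplicity equals $\sum_i n_ip_i+t$ with all $p_i,t$ even, or equals $m_\lambda$, which is even). Now the block‑triangular shape of $C$ yields the characteristic polynomial $\chi_C(x)=(x-a)\,\chi_B(x)$, so multiplicities transfer cleanly: for a real eigenvalue $\mu\neq a$ one has $m_\mu(C)=m_\mu(B)$, which is even, whereas $m_a(C)=1+m_a(B)$, which is odd. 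Hence $a\in\text{spec}_\RR(C)$ has odd multiplicity and is the only real eigenvalue of $C$ with this property; transporting back through the scaling by $c$ produces the unique $a\in\text{spec}_\RR(M)$ with $m_a$ odd.

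Finally, the ``in particular'' clause follows by applying Lemma \ref{M(a)+Q} with $b=a$, giving that $M$ is conjugate to $M(a)$ when $m_a=2n-1$ and to $M(a)\oplus Q$ with $Q\in M(2n-1-m_a,\RR)$ and $a\notin\text{spec}\,(Q)$ when $m_a<2n-1$. I expect no serious obstacle, since the substantive input---that every real eigenvalue of a matrix in $M_J(2(n-1),\RR)$ has even multiplicity---is already furnished by Proposition \ref{solv-complex2n}. The only point demanding care is the parity flip at $a$: the factor $(x-a)$ coming from the top‑left entry raises the multiplicity of $a$ by exactly one whether or not $a\in\text{spec}\,(B)$, which is precisely what makes $m_a$ odd while leaving every other real eigenvalue even. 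As a consistency check, existence of some odd‑multiplicity real eigenvalue is automatic for any matrix of odd size $2n-1$ (the non‑real eigenvalues pair up with equal multiplicities), so the complex structure is needed only to force uniqueness.
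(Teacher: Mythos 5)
Your proof is correct and follows essentially the same route as the paper's: Remark \ref{rem:complex} to get the block form $\begin{pmatrix} a & 0 \\ v & B\end{pmatrix}$, Proposition \ref{solv-complex2n} to force every real eigenvalue of $B$ to have even multiplicity, and Lemma \ref{M(a)+Q} for the final decomposition. The only differences are that you spell out the parity bookkeeping via $\chi_C(x)=(x-a)\chi_B(x)$ and handle the possible nonzero scaling $c$ explicitly (the paper simply asserts $M$ is conjugate to $C$), both of which are harmless elaborations of the same argument.
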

\begin{proof}
By Remark \ref{rem:complex} $M$ is conjugate to $C= \begin{pmatrix}
      a   & 0 \\
      v   & B
     \end{pmatrix}$, where $a\in \RR$, $v\in \RR^{2n-2}$ and $B\in M_J (2n-2, \RR)$, for some $J$. Since $M$ and $C$ are conjugate and $a$ is an eigenvalue of $C$, then $a$ is an eigenvalue of $M$. By Proposition \ref{solv-complex2n}, $a$ is the unique real eigenvalue of $M$ such that $m_a$ is odd. The last sentence of the statement follows directly from Lemma \ref{M(a)+Q}. 
\end{proof}

In view of Proposition \ref{odd-space} we state the following theorems. In the first one, which is a consequence of Corollary \ref{cor:classif-complex}, we consider the case when $M$ has a unique eigenvalue $a$, with $a\in \RR$.

\begin{theorem} \label{complex_M(a)} 
    Let $M\in M_a(2n-1, \RR)$, with $a\in \RR$. Then, $\ggo_M$ admits a complex structure if and only if $\Lambda (M)\in {\bigwedge}^c_{2n-1}$.
\end{theorem}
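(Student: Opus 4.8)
The plan is to reduce the single-eigenvalue case to the nilpotent classification in Corollary~\ref{cor:classif-complex} by means of the shift $M\mapsto M-aI_{2n-1}$. The starting observation, extracted from Remark~\ref{rem:complex}, is that $\ggo_M$ admits a complex structure if and only if $M$ is conjugate in $GL(2n-1,\RR)$ to a matrix of the form \eqref{eq:complex-abelian}, namely $C=\minimatrix{a'}{0}{v}{B}$ with $B\in M_J(2(n-1),\RR)$ for some $J$ satisfying $J^2=-I_{2(n-1)}$. First I would check that no spurious scalar intervenes in this equivalence: Proposition~\ref{complex} only produces the distinguished generator up to a nonzero multiple, so a priori $M$ is only conjugate to $\tfrac1c C$ for some $c\neq0$; but since $M_J(2(n-1),\RR)$ is a linear subspace of $M(2(n-1),\RR)$, the matrix $\tfrac1c B$ still lies in $M_J$, so $\tfrac1c C$ is again of the form \eqref{eq:complex-abelian}.

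The key step is that being conjugate to a matrix of the form \eqref{eq:complex-abelian} is stable under the shift by $aI_{2n-1}$. Suppose $M$ is conjugate to such a $C$. Since $M\in M_a(2n-1,\RR)$ has $a$ as its only eigenvalue, comparing characteristic polynomials forces $a'=a$ and $\text{spec}(B)=\{a\}$. Then $M-aI_{2n-1}$ is conjugate to $C-aI_{2n-1}=\minimatrix{0}{0}{v}{B-aI}$, and because $M_J$ is a linear subspace containing the identity we have $B-aI\in M_J(2(n-1),\RR)$; moreover $B-aI$ is now nilpotent. Thus $M-aI_{2n-1}$ is conjugate to a matrix of the form \eqref{eq:complex-abelian} with vanishing upper-left entry. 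The reverse implication is identical, adding $aI_{2n-1}$ instead of subtracting it. Combining both directions with the first paragraph, $\ggo_M$ admits a complex structure if and only if the \emph{nilpotent} almost abelian Lie algebra $\ggo_{M-aI_{2n-1}}$ does.

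To conclude, set $N:=M-aI_{2n-1}$, which is nilpotent. Corollary~\ref{cor:classif-complex} gives that $\ggo_N$ admits a complex structure if and only if $\Lambda(N)\in{\bigwedge}^c_{2n-1}$, while Remark~\ref{L(A)=L(N)} gives $\Lambda(M)=\Lambda(M-aI_{2n-1})=\Lambda(N)$. Feeding these into the shift equivalence yields precisely the assertion that $\ggo_M$ admits a complex structure if and only if $\Lambda(M)\in{\bigwedge}^c_{2n-1}$. In the special case $a=0$ the matrix $M$ is already nilpotent and the statement is literally Corollary~\ref{cor:classif-complex}, and the argument above specializes without change, including the degenerate possibility $M=aI_{2n-1}$, where $\Lambda(M)=(2n-1)\in{\bigwedge}^c_{2n-1}$.

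I expect the only real point requiring care to be the bookkeeping of the first paragraph, that is, ensuring that ``admits a complex structure'' is genuinely equivalent to ``conjugate to the form \eqref{eq:complex-abelian}'' with no leftover scalar. Once the two closure properties of $M_J$ are isolated (closure under scaling and under adding multiples of $I$), the reduction to the nilpotent case is purely formal and needs no further input from Proposition~\ref{solv-complex2n} beyond what already went into Corollary~\ref{cor:classif-complex}.
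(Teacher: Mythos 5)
Your proof is correct and follows essentially the same route as the paper's: shift $M$ by $aI_{2n-1}$, use the fact that $M_J(2(n-1),\RR)$ is a linear subspace containing the identity so that the form \eqref{eq:complex-abelian} is preserved under the shift, and reduce to the nilpotent classification via Corollary \ref{cor:classif-complex} and Remark \ref{L(A)=L(N)}. The only difference is presentational: you make explicit the scalar ambiguity inherent in Remark \ref{rem:complex} (and why closure of $M_J$ under scaling renders it harmless), a point the paper's proof passes over silently.
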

\begin{proof}

Suppose that $\ggo_M$ admits a complex structure, then by Remark \ref{rem:complex}, $M$ is conjugate to $C= \begin{pmatrix}
      a   & 0 \\
      v   & B
     \end{pmatrix}$, where $v\in \RR^{2n-2}$ and $B\in M_J (2n-2, \RR)$ for some $J$. In particular, $B\in M_a(2n-2, \RR)$. We consider the nilpotent matrix $N= C-aI_{2n-1}= \begin{pmatrix}
      0   & 0 \\
      v   & N'
     \end{pmatrix}$, where $N' = B-aI_{2n-2}$. Since $N' \in M_J (2n-2, \RR)$, by Remark \ref{rem:complex} we obtain that $\ggo_N$ admits a complex structure. Then, Corollary \ref{cor:classif-complex} implies that $\Lambda (N)\in {\bigwedge}^c_{2n-1}$. Finally, using Remark \ref{L(A)=L(N)} and the fact that $\Lambda (C)=\Lambda (M)$, we obtain that $\Lambda (M)\in {\bigwedge}^c_{2n-1}$.

Conversely, suppose that $\Lambda (M) \in {\bigwedge}^c_{2n-1}$ and consider as above, the nilpotent matrix $N=M-aI_{2n -1}$, then by Remark \ref{L(A)=L(N)} we have $\Lambda(N)\in {\bigwedge}^c_{2n-1}$. Therefore, Corollary \ref{cor:classif-complex} implies that $\ggo_N$ admits a complex structure. In particular, $N$ is conjugate to a matrix $\left(\begin{array}{cc}
      0  & 0  \\
      v &  N' 
      \end{array}\right)$, where $N' \in M_{J} (2n -2, \RR)$   for some $J$. So, $M$ is conjugate to $\left(\begin{array}{cc}
      a  & 0  \\
      v &  B 
      \end{array}\right)$, where $B= N' + a I_{2n -2} \in M_{J} (2n -2, \RR)$. In consequence, by Remark \ref{rem:complex},  $\ggo_M$ admits a complex structure.
      \end{proof}

In the next theorem we consider the case when $M$ has two or more distinct eigenvalues and at least one of them is real.
      
\begin{theorem}\label{complex_M(a)+Q}    
 Let $M\in M(2n-1,\RR)$ such that $M=M(a) \oplus Q$ where $a\not\in \emph{spec}\, (Q)$, $M(a)\in M(m_a,\RR)$, with $a\in \RR$ and $m_a$ odd. Then, $\ggo_M$ admits a complex structure if and only if $\Lambda (M(a))\in {\bigwedge}^c_{m_a}$ and $Q\in M_{J'}(m,\RR)$, for some $J'$ such that $(J')^2=-I_m$ with $m=2n-1-m_a$.
\end{theorem}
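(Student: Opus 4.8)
The plan is to prove both directions using the structural results already established, especially Proposition~\ref{solv-complex2n} (which characterizes when a matrix is conjugate into $M_{J}(2m,\RR)$ via the parity condition \eqref{ast}), Theorem~\ref{complex_M(a)}, and the splitting provided in Proposition~\ref{odd-space}. The key observation is that the real eigenvalue $a$ with $m_a$ odd is forced to ``absorb'' the unique $1$-dimensional part $\RR je_0$ coming from the complex structure (cf. Remark~\ref{rem:complex}), while the remaining block $Q$ must lie entirely inside a $J'$-invariant complex subalgebra.

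For the forward direction, I would argue as follows. Suppose $\ggo_M$ admits a complex structure. By Remark~\ref{rem:complex} and Remark~\ref{rem_complex-conj}, $M$ is conjugate to a matrix $C=\minimatrix{c}{0}{v}{B}$ with $c\in\RR$, $v\in\RR^{2n-2}$, and $B\in M_J(2n-2,\RR)$ for some $J$ with $J^2=-I_{2n-2}$. Since $B$ is conjugate into $M_J(2n-2,\RR)$, Proposition~\ref{solv-complex2n} applies to $B$: for every real eigenvalue $\lambda$ of $B$, the multiplicity $m_\lambda(B)$ is \emph{even}. Now $\text{spec}\,(C)=\{c\}\cup\text{spec}\,(B)$, and comparing multiplicities of the real eigenvalue $a$ in $M$ (odd, by hypothesis) forces $c=a$: indeed $a$ is the unique real eigenvalue of $M$ with odd multiplicity by Proposition~\ref{odd-space}, and the extra $1$ coming from the top-left entry $c$ must land on $a$ to make $m_a$ odd. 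Writing the decomposition $M=M(a)\oplus Q$ and tracking where the blocks go, the part of $C$ corresponding to eigenvalues other than $a$ is precisely (a conjugate of) $Q$, and it sits inside $M_J(\cdot,\RR)$; restricting $J$ to this invariant subspace yields the desired $J'$ with $Q\in M_{J'}(m,\RR)$. The condition $\Lambda(M(a))\in{\bigwedge}^c_{m_a}$ then follows by applying Theorem~\ref{complex_M(a)} to the $M(a)$-summand, which carries its own complex structure built from the $\RR je_0$ piece.

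For the converse, assume $\Lambda(M(a))\in{\bigwedge}^c_{m_a}$ and $Q\in M_{J'}(m,\RR)$ for some $J'$ with $(J')^2=-I_m$. By Theorem~\ref{complex_M(a)}, the algebra $\ggo_{M(a)}$ admits a complex structure, so $M(a)$ is conjugate to a matrix $\minimatrix{a}{0}{v}{B}$ with $B\in M_{J''}(m_a-1,\RR)$ for some $J''$ (note $m_a-1$ is even). Then one assembles the complex structure on the full $M=M(a)\oplus Q$ by taking $j$ to act as the $J''$-complex structure on the $M(a)$-block together with the $\RR je_0$-direction, and as $J'$ on the $Q$-block. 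Concretely, $M$ is conjugate to $\minimatrix{a}{0}{v'}{B'}$ with $B'=B\oplus Q\in M_{J''\oplus J'}(2n-2,\RR)$, which has the form \eqref{eq:complex-abelian}; Remark~\ref{rem:complex} then gives the complex structure on $\ggo_M$.

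The main obstacle I anticipate is the bookkeeping in the forward direction: verifying rigorously that the real eigenvalue picking up the ``$+1$'' from the top-left corner of $C$ is exactly $a$, and that the complementary summand is genuinely conjugate to $Q$ and inherits an honest complex structure $J'$ on an \emph{even}-dimensional invariant complement. The multiplicity parity from Proposition~\ref{solv-complex2n} does the heavy lifting here, since it guarantees every \emph{other} real eigenvalue of $B$ (hence of $M$) has even multiplicity, so uniqueness in Proposition~\ref{odd-space} pins $a$ down and forces $c=a$; the even-dimensionality of the $Q$-part is then automatic, which is what allows $J'$ to exist. I would make this precise by separating eigenvalues and invoking \eqref{Cdecomposition} to decompose both $M$ and the ambient complex structure compatibly.
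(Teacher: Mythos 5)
Your proposal is correct and follows essentially the same route as the paper's proof: in both, the forward direction combines Remark \ref{rem:complex} with the parity condition \eqref{ast} of Proposition \ref{solv-complex2n} to force the corner entry to be $a$, splits $C$ into its $a$-part (to which Theorem \ref{complex_M(a)} is applied) and a complementary part conjugate to $Q$ commuting with a complex structure, while the converse assembles the block matrix with $J=J_1\oplus J'$ exactly as the paper does. The only difference is bookkeeping: where you decouple the two parts by restricting $J$ to the $J$-invariant generalized eigenspaces of $B$, the paper conjugates $B$ to $B(a)\oplus Q'$ and removes the cross term with Lemma \ref{B(a)+D}(ii) — the two devices accomplish the same step.
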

\begin{proof} 
First, assume that $M(a)=aI_{m_a}$, then $M=aI_{m_a} \oplus Q$ with $m_a$ odd. If $m_a=1$, then $\ggo_M$ admits a complex structure if and only if $Q\in M_{J'}(m,\RR)$, for some $J'$. On the other hand, if $m_a>1$ we have that $m_a-1>0$ is even and $a I_{m_a -1} \oplus Q\in M_{J'}(2n-2, \RR)$, for some $J'$, and this holds if and only if $Q\in M_{J'}(m,\RR)$, for some $J'$. Therefore, by Remark \ref{rem:complex},  $\ggo_M$ admits a complex structure if and only if $Q\in M_{J'}(m,\RR)$, for some $J'$.  

Now, assume that $M(a)\neq aI_{m_a}$. If $\ggo_M$ admits a complex structure, then by Remark \ref{rem:complex}, $M$ is conjugate to $C= \begin{pmatrix}
      a   & 0 \\
      v   & B
     \end{pmatrix}$, where $v\in \RR^{2n-2}$ and $B\in M_J (2n-2, \RR)$ for some $J$. Therefore, by Proposition \ref{solv-complex2n},  any real eigenvalue of $B$ satisfies  condition \eqref{ast}. Since $M(a) \neq aI_{m_a}$, we have that $m_a >1$. Hence,  $a\in \text{spec}_\RR (B)$ and $B$ is conjugate to $B(a)\oplus  Q'$, with $a\notin \text{spec}\,(Q') $. Then, any real eigenvalue of both, $B(a)$ and $Q'$, satisfies  condition \eqref{ast}. In particular, by Proposition \ref{solv-complex2n}, $B(a) \in M_{J_1}(m_a-1,\RR)$ and $Q' \in M_{J_2}(m,\RR)$, for some $J_1$ and $J_2$. 
   Since $a\notin \text{spec}(Q') $, Lemma~\ref{B(a)+D} implies that $C$ is conjugate to 
   $
   \left(
   \begin{array}{c|c|c}
      a  & 0  & 0\\
      \hline
      u &  B(a) & 0 \\
      \hline 
      0  & 0 & Q'
      \end{array}
     \right)
  $, where $u\in \RR ^{m_a -1}$, $ B(a) \in M(m_a -1,\RR)$,  $Q' \in M(m,\RR)$. 

Let $C_a:=\left(\begin{array}{cc}
      a  & 0  \\
      u &  B(a) 
      \end{array}\right)\in M(m_a, \RR)$.
Since $B(a) \in M_{J_1}(m_a -1,\RR)$, by Remark \ref{rem:complex}, $\ggo _{C_a}$ admits a complex structure. Then applying Theorem \ref{complex_M(a)} we have $\Lambda (C_a)\in {\bigwedge}^c_{m_a}$. Finally, since $M = M(a) \oplus Q$ is conjugate to $C_a\oplus Q'$, we obtain that $\Lambda (C_a)= \Lambda (M(a))$ and $Q'$ is conjugate to $Q$. Hence, the statement follows.

Conversely, suppose that $\Lambda (M(a)) \in {\bigwedge}^c_{m_a}$, then by Theorem \ref{complex_M(a)}, we obtain that $\ggo_{M(a)}$ admits a complex structure. In particular, $M(a)$ is conjugate to a matrix $\left(\begin{array}{cc}
      a  & 0  \\
      v &  B 
      \end{array}\right)$, where $B\in M_{J_1} (m_a -1, \RR)$ for some $J_1$. 
      On the other hand, $Q\in M_{J'}(m, \RR)$ for some $J'$, since any real eigenvalue of $Q$ satisfies condition \eqref{ast}.
      Finally, by Remark \ref{rem:complex}, $\ggo_M$ admits a complex structure since $M$ is conjugate to $\left(\begin{array}{c|c|c}
      a  & 0  & 0\\
      \hline
      v &   B & 0 \\
      \hline
      0 & 0 & Q
      \end{array}\right)$, where  $B \oplus Q \in M_J (2n-2, \RR)$ with $J=J_1\oplus J'$.  \end{proof}

\

Let $A\in M(2n-1, \RR)$ be a non-nilpotent matrix. By combining   Theorem \ref{complex_M(a)}, Theorem \ref{complex_M(a)+Q} and  Lemma \ref{lem:isom-classes} it follows that  
$\ggo_A$ admits a complex structure if and only if $A$ is conjugate to $b M$ for some  $b\in\RR, \, b\neq 0$, where either $1\in\text{spec}\, (M)$ or $0\in\text{spec}\, (M)$ and $M$ takes one of the  forms below:
\begin{enumerate}
 \item $M=M(1)$  with $\Lambda (M(1))\in \bigwedge ^c_{2n-1}$,
    \item  $M=M(1) \oplus Q$ where   $\Lambda (M(1))\in \bigwedge ^c_{m_1}$ with $m_1$  odd and $Q\in M_{J_r} (2r, \RR)$ with $2r=2n-1-m_1 $ and $1\notin \text {spec}\, (Q)$,
    \item $M=M(0)\oplus Q$ where $\Lambda (M(0))\in \bigwedge ^c_{m_0}$ with $m_0$ odd  and $Q\in M_{J_r} (2r, \RR)$ with $2r=2n-1-m_0$, $0\notin \text {spec} \,(Q)=\{ \mu_1, \ldots , \mu_s \}$ and $1=|\mu _1| \geq \cdots  \geq |\mu_s |>0$. 
    \end{enumerate}\ 

We point out that, since $A$ is conjugate to $bM$ with $b\neq 0$,  it follows from Proposition \ref{ad-conjugated} that $\ggo_A$ is isomorphic to $\ggo_M$. Therefore, 
the isomorphism classes of $\ggo_A$ are parametrized by the isomorphism classes of $\ggo_M$. In order to obtain this parametrization, we  define the following quotient spaces:
\[ \mathcal C^1_r:=\{ Q\in M_{J_r} (2r, \RR) \,:\, 1 \notin \text {spec}\, (Q) \} /GL(2r,\RR ),  \]
\[   \mathcal{C}^0_r:= \{Q\in M_{J_r} (2r, \RR) : 0\notin \text {spec}\, (Q)  =\{ \mu_1, \ldots , \mu_s \} \text{ and } 1=|\mu _1| \geq \cdots  \geq |\mu_s |>0   \}/ _\sim
\]
where $Q_1\sim Q_2$ if and only if $Q_1$ is  $GL(2r,\RR )$-conjugate to $\pm Q_2$ (see Lemma \ref{lem:isom-classes}).

\begin{corollary} Let $A\in M(2n-1, \RR)$ be a non-nilpotent matrix. The isomorphism classes of $\ggo_A$  admitting a complex structure are
the  isomorphism classes of $\ggo_M$ for $M$ as in ${\rm (1)}$, ${\rm (2)}$ or ${\rm (3)}$ above, which are parametrized, respectively, by:   
\begin{equation*}
   \text{${\rm (1)}$ } \;  {\bigwedge} ^c_{2n-1},
    \qquad \text{${\rm (2)}$ } \; \bigcup_{k=1}^{n-1} \, \left({\bigwedge} ^c_{{2k-1}}\times\; \mathcal C_{n-k}^1\right),  \qquad 
    \text{${\rm (3)}$ } \;\bigcup_{k=1}^{n-1} \, \left({\bigwedge} ^c_{{2k-1}}\times\; \mathcal C_{n-k}^0\right).    
\end{equation*}
\end{corollary}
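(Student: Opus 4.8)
The plan is to leverage the characterization established in the paragraph immediately preceding the statement: $\ggo_A$ admits a complex structure exactly when $A$ is conjugate to $bM$ for some $b\neq 0$ with $M$ of one of the forms ${\rm (1)}$, ${\rm (2)}$, ${\rm (3)}$, and in that case $\ggo_A\cong\ggo_M$ by Proposition \ref{ad-conjugated}. Hence the task reduces to parametrizing the isomorphism classes of the algebras $\ggo_M$ within each family. The basic tool throughout is again Proposition \ref{ad-conjugated}: $\ggo_M\cong\ggo_{M'}$ if and only if $M'$ is conjugate to $cM$ for some $c\neq 0$. The entire argument consists in determining, for each family, precisely which scalars $c$ are admissible, and then reducing the residual freedom to a conjugacy problem that the maps $\tilde\Lambda$, $\mathcal{C}^1_r$, $\mathcal{C}^0_r$ are designed to record.

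For family ${\rm (1)}$, where $M=M(1)$ has unique eigenvalue $1$, I would observe that $cM$ has unique eigenvalue $c$, so comparing spectra forces $c=1$ whenever $cM$ is conjugate to another matrix of the same form; thus isomorphism classes coincide with conjugacy classes in $M_1(2n-1,\RR)$. Combining Theorem \ref{complex_M(a)}, which singles out those admitting a complex structure via $\Lambda(M)\in\bigwedge^c_{2n-1}$, with the bijection $\tilde\Lambda$ of \eqref{eq:bijection-tuples} taken for $b=1$, yields the parametrization by $\bigwedge^c_{2n-1}$.

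For family ${\rm (2)}$, the crucial facts are that $1$ is, by Proposition \ref{odd-space}, the unique real eigenvalue of odd multiplicity $m_1$, and that $1\notin\text{spec}(Q)$. Since the value of the odd eigenvalue of $cM$ is $c\cdot 1=c$ and receives no contribution from $cQ$, matching with the odd eigenvalue $1$ of any isomorphic $M'$ again forces $c=1$; isomorphism classes therefore correspond to conjugacy classes of $M(1)\oplus Q$. Because $1\notin\text{spec}(Q)$, the splitting into the generalized $1$-eigenspace and its complement is canonical, so conjugacy of $M$ factors as conjugacy of $M(1)$ together with conjugacy of $Q$; by Theorem \ref{complex_M(a)+Q} the first factor ranges over $\bigwedge^c_{m_1}$ and the second over $\mathcal{C}^1_r$. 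Writing $m_1=2k-1$ (odd) gives $2r=2n-1-m_1=2(n-k)$, whence $r=n-k$ and $1\leq k\leq n-1$ when $Q$ is present, producing $\bigcup_{k=1}^{n-1}(\bigwedge^c_{2k-1}\times\mathcal{C}^1_{n-k})$; the union is disjoint because $m_1$, hence $k$, is an isomorphism invariant. Family ${\rm (3)}$ runs identically, except that the odd eigenvalue is now $0$, which every scaling fixes. The normalization $1=|\mu_1|\geq\cdots\geq|\mu_s|>0$ of $\text{spec}(Q)$ then forces $|c|=1$, i.e. $c=\pm1$, exactly as in Lemma \ref{lem:isom-classes}(ii); and since $cM(0)$ is conjugate to $M(0)$ (a nonzero multiple of a nilpotent matrix, cf. Remark \ref{rem:nilp-similiar}), the residual sign acts only on $Q$, yielding the equivalence $Q\sim\pm Q$ defining $\mathcal{C}^0_r$ and hence the parametrization $\bigcup_{k=1}^{n-1}(\bigwedge^c_{2k-1}\times\mathcal{C}^0_{n-k})$.

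The main obstacle is not any isolated computation but the careful bookkeeping of the scaling freedom: one must verify in each family that the admissible $c$ are exactly $\{1\}$ (forms ${\rm (1)}$, ${\rm (2)}$) or $\{\pm1\}$ (form ${\rm (3)}$), and that, once $c$ is fixed, the conjugacy class of $M$ decomposes as the product of the class of its odd part with that of $Q$. This relies on Proposition \ref{odd-space} for uniqueness of the odd real eigenvalue and on the canonicity of the generalized-eigenspace splitting guaranteed by $1\notin\text{spec}(Q)$ (respectively $0\notin\text{spec}(Q)$). I would also record the range bookkeeping, namely that $m_1=2k-1$ with $2r=2n-1-m_1$ forces $r=n-k$ and $1\leq k\leq n-1$ whenever $Q$ is nontrivial, so that the three families, distinguished by whether the odd eigenvalue is nonzero with no further spectrum (${\rm (1)}$, with $r=0$), nonzero with further spectrum (${\rm (2)}$), or zero with further spectrum (${\rm (3)}$), are mutually exclusive and together exhaust all non-nilpotent $\ggo_A$ admitting a complex structure.
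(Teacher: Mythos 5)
Your proposal is correct and follows essentially the same route as the paper: the corollary there is obtained by combining the characterization in the preceding paragraph (itself a consequence of Theorem \ref{complex_M(a)}, Theorem \ref{complex_M(a)+Q} and Lemma \ref{lem:isom-classes}) with the definitions of $\mathcal C^1_r$ and $\mathcal C^0_r$, exactly as you do. Your explicit bookkeeping of the admissible scalars --- matching the unique odd-multiplicity real eigenvalue from Proposition \ref{odd-space} to force $c=1$ in cases ${\rm (1)}$--${\rm (2)}$, and invoking the normalization of $|\mu_1|$ to force $c=\pm1$ in case ${\rm (3)}$ --- is precisely the content the paper packages into Lemma \ref{lem:isom-classes}, so you have merely made the implicit details explicit rather than taken a different path.
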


    \

\section{ Classification of almost abelian Lie algebras  admitting symplectic structures }

\subsection{Characterization of almost abelian Lie algebras admitting a symplectic  structure}

We consider the Lie subalgebra of $M(2m, \RR)$ given by
\begin{equation}\label{eq:sympl}
 \begin{split}
    \spg (2m, \RR)  & :=  \left\{ D \in M(2m, \RR) : D^T J_m + J_m D=0\right \} \\
      & =  \left\{ \begin{pmatrix} 
X & Y \\
Z & -X^T
\end{pmatrix} : X,Y,Z \in  M(m, \mathbb{R}), \; Y, Z \text{ symmetric} \right\},
  \end{split}
\end{equation}
where $J_m$ is the matrix defined in \eqref{eq:Jm} and $R^T$ denotes the transpose of a matrix $R$.

\begin{remark}\label{rem-symp-conj}
In general, each non-singular  skew-symmetric $\Omega \in M(2m, \RR)$  determines a Lie subalgebra of $M(2m, \RR)$ given by
\begin{equation}\label{Omega_J}
\spg_{\Omega}(2m, \RR):= \left\{D\in M(2m, \RR): D^T \Omega + \Omega D=0\right \}, 
\end{equation}
and 
it turns out that each $\spg_{\Omega}(2m, \RR)$ is conjugate  to $\spg (2m, \RR)$ in $GL(2m, \RR)$. 
More precisely, if $\Omega= Q^T J_m Q$ for some  $Q\in GL(2m, \RR) $, then $D\in \spg_\Omega (2m, \RR)$ if and only if $QDQ^{-1}\in \spg (2m, \RR)$. Moreover, since $D^T\in \spg (2m, \RR)$ for all $D\in \spg (2m, \RR)$, it follows that  $B\in \spg_{\Omega_1} (2m, \RR)$ if and only if  $B^T\in \spg_{\Omega_2}(2m, \RR)$, where $\Omega_1= P^T J_m P$ and $\Omega_2= P^{-1} J_m (P^{-1})^T$, for some $P\in GL(2m, \RR)$. 
\end{remark}
Given a $2m$-dimensional real vector space $V$ and a non-degenerate 2-form $\omega$ on $V$, we define
\[\spg (V, \omega):=\{ S\in \glg(V) :\omega (Sx,y)+ \omega (x,Sy)=0, \text{ for all } x,y\in V \}.\]
Given a basis $\mathcal B$ of $V$ and $S\in \spg (V,\omega )$, we denote by $D$ and $\Omega$ the matrices of $S$ and $\omega$ with respect to $\mathcal B$, respectively. Then, using the notation introduced in \eqref{Omega_J}, $D \in \spg_{\Omega}\left(2m, \RR\right)$.

The following fact will be used frequently in the sequel.  

\begin{remark}\label{suma w_i} 
If $D_i \in \spg _{\Omega_i} (2m_i, \RR)$, for $i=1,2$, then $D=D_1 \oplus D_2\in \spg _\Omega (2m, \RR)$, where $\Omega=\Omega_1 \oplus \Omega_2$ and $m=m_1 +m_2$. Therefore, we have  an inclusion \[ \spg _{\Omega_1} (2m_1, \RR)\oplus \spg _{\Omega_2} (2m_2, \RR)\subset \spg _\Omega (2m, \RR).\] 
\end{remark}

The next proposition gives a characterization of the $2n$-dimensional almost abelian Lie algebras admitting a symplectic structure (see also \cite[Proposition 4.1]{LW}).
\begin{proposition}\label{prop:symp}
Let $\ggo$ be a $2n$-dimensional almost abelian Lie algebra with  codimension-one abelian ideal $\hg$. If $\omega$ is a symplectic structure on $\ggo$, then there exist $e_0\notin \hg, \, e_1\in \hg, \, c\in \RR $ and a subspace $\mathfrak u$ of $\hg$  such that  $\hg= \RR e_1\oplus \mathfrak{u}$, $\omega_\mathfrak{u}:= \omega|_{\mathfrak{u}\times \mathfrak{u}}$ is non-degenerate, 
$\omega = e^0 \wedge e^1 + \omega_\mathfrak{u}$ and $[e_0,e_1]=ce_1$. Moreover, $\Pi \circ \left(\text{ad}_{e_0}|_\mathfrak u \right)\in \spg (\mathfrak u, \omega_\mathfrak{u})$, where $\Pi :\hg \to \mathfrak u$ is the projection on $\mathfrak u$ along $\RR e_1$. 
 \end{proposition}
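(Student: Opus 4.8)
The plan is to extract everything from the two defining properties of $\omega$, non-degeneracy and closedness, in that logical order. First I would unpack the cocycle condition. Since $\hg$ is abelian, every bracket in $\ggo=\RR e_0\oplus\hg$ is governed by $\ad_{e_0}$, so the only non-trivial instances of $d\omega=0$ are those with a single $e_0$-entry. Evaluating $d\omega(e_0,y,z)$ for $y,z\in\hg$ and using $[y,z]=0$ collapses the three-term expression to the identity
\[
\omega(\ad_{e_0}y,z)+\omega(y,\ad_{e_0}z)=0,\qquad y,z\in\hg,
\]
that is, $\ad_{e_0}|_\hg$ is infinitesimally symplectic for the (possibly degenerate) form $\omega_\hg:=\omega|_{\hg\times\hg}$. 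This single identity will drive the whole argument.

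Next I would analyze the radical $R:=\{v\in\hg:\omega_\hg(v,\hg)=0\}$. Because $\omega$ is non-degenerate on the $2n$-dimensional space $\ggo$ while $\dim\hg=2n-1$, the orthogonal $\hg^{\perp_\omega}$ is a line and $R=\hg\cap\hg^{\perp_\omega}$ has dimension $0$ or $1$; since the radical of a skew form on an odd-dimensional space is odd-dimensional, this forces $\dim R=1$. Let $e_1$ span $R$. Feeding $y=e_1$ into the displayed identity and using $\omega(e_1,\hg)=0$ gives $\omega(\ad_{e_0}e_1,z)=0$ for all $z\in\hg$, so $\ad_{e_0}e_1\in R=\RR e_1$, which is exactly $[e_0,e_1]=c\,e_1$ for some $c\in\RR$. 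I would then pick any complement $\mathfrak{u}$ of $R$ in $\hg$: if $u\in\mathfrak u$ satisfies $\omega_\hg(u,u')=0$ for all $u'\in\mathfrak u$, then together with $\omega(u,e_1)=0$ it lies in $R\cap\mathfrak u=0$, so $\omega_\mathfrak{u}:=\omega|_{\mathfrak u\times\mathfrak u}$ is automatically non-degenerate.

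To reach the normal form $\omega=e^0\wedge e^1+\omega_\mathfrak{u}$ I would normalize $e_0$. Since $e_1$ is $\omega_\hg$-null, non-degeneracy of $\omega$ on $\ggo$ forces $\omega(e_0,e_1)\neq 0$, and after rescaling we may take $\omega(e_0,e_1)=1$. The functional $u\mapsto\omega(e_0,u)$ on $\mathfrak u$ is represented, via non-degeneracy of $\omega_\mathfrak u$, by a unique $u_0\in\mathfrak u$; replacing $e_0$ by $e_0-u_0$ kills $\omega(e_0,\cdot)|_\mathfrak u$ while changing neither $\omega(e_0,e_1)$ (as $e_1\in R$), nor the relation $[e_0,e_1]=c\,e_1$ (as $u_0\in\hg$ is central in $\hg$), nor the condition $e_0\notin\hg$. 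With $\omega(e_0,e_1)=1$, $\omega(e_0,\mathfrak u)=0$, $\omega(e_1,\hg)=0$, and $\omega|_{\mathfrak u\times\mathfrak u}=\omega_\mathfrak u$, the claimed decomposition of $\omega$ follows.

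Finally, for the \emph{Moreover} clause I would return to the displayed identity restricted to $u,u'\in\mathfrak u$. Writing $\ad_{e_0}u=\Pi(\ad_{e_0}u)+\lambda e_1$, the $\RR e_1$-component contributes nothing to $\omega(\cdot,u')$ because $e_1\in R$, so $\omega(\ad_{e_0}u,u')=\omega_\mathfrak u(\Pi\,\ad_{e_0}u,u')$, and likewise in the second slot; hence $\omega_\mathfrak u(\Pi\,\ad_{e_0}u,u')+\omega_\mathfrak u(u,\Pi\,\ad_{e_0}u')=0$, i.e. $\Pi\circ(\ad_{e_0}|_\mathfrak u)\in\spg(\mathfrak u,\omega_\mathfrak u)$. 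I expect the only genuine subtlety to be the dimension-and-parity count pinning $\dim R=1$, together with checking that the $e_0$-adjustment preserves the bracket relation; the remainder is bookkeeping with the single cocycle identity.
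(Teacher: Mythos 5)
Your proof is correct: every step checks out, including the parity argument pinning $\dim R=1$, the verification that $\omega_{\mathfrak u}$ is non-degenerate on an arbitrary complement, and the check that replacing $e_0$ by $e_0-u_0$ preserves $\omega(e_0,e_1)$, the bracket relation, and $\ad_{e_0}|_{\hg}$. The route differs from the paper's in how the decomposition is built. The paper first fixes $e_1$ spanning $\hg^{\perp_\omega}$ and $e_0$ with $\omega(e_0,e_1)=1$, and then \emph{defines} $\mathfrak u:=\operatorname{span}\{e_0,e_1\}^{\perp_\omega}$; with this choice the normal form $\omega=e^0\wedge e^1+\omega_{\mathfrak u}$ is automatic and no adjustment of $e_0$ is ever needed. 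It then obtains $[e_0,e_1]=ce_1$ afterwards, by writing $[e_0,e_1]=ce_1+v$ with $v\in\mathfrak u$ and computing $d\omega(e_0,e_1,z)=-\omega(v,z)$ for $z\in\mathfrak u$. You instead take an \emph{arbitrary} complement $\mathfrak u$ of the radical and then correct $e_0$ by the Riesz representative $u_0$ of $\omega(e_0,\cdot)|_{\mathfrak u}$, a Gram--Schmidt-type normalization; and you get $[e_0,e_1]=ce_1$ earlier and more conceptually, by showing from the cocycle identity that $\ad_{e_0}$ preserves the radical $R$, which works for every choice of $e_0\notin\hg$ before any normalization. The paper's construction buys brevity (the orthogonal-complement choice makes two of the claimed properties definitional), while yours buys flexibility: it makes explicit that any complement of $R$ serves as $\mathfrak u$ and isolates the single identity $\omega(\ad_{e_0}y,z)+\omega(y,\ad_{e_0}z)=0$ on $\hg$ as the source of both the bracket relation and the \emph{Moreover} clause. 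Both are complete proofs of the statement.
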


\begin{proof}
Consider the orthogonal complement $\hg^{\perp_{\omega}}$ of $\hg$  with respect to $\omega$. Since $\dim\hg = 2n-1$, we have that $\dim \hg^{\perp_{\omega}} =1$ and $\hg^{\perp_{\omega}} \subset \hg.$ Let $e_1 \in \hg$ such that $\hg^{\perp_{\omega}} = \vspan\{e_1\}$. Since $\omega$ is non-degenerate, there exists $e_0 \notin \hg$ such that $\omega(e_0,e_1)=1$. Setting $V=\vspan\{e_0,e_1\}$, we have $\vspan\{e_1\} \subset V$, which implies $\mathfrak u :=V^{\perp_{\omega}} \subset \hg$. Then, $\omega_\mathfrak u$ is non-degenerate and $\omega = e^0 \wedge e^1 + \omega_\mathfrak u$. 

We show next that if $[e_0,e_1]=ce_1 +v$ with $v\in \mathfrak u $, then $v=0$. Since $\omega$ is closed and $\hg$ is abelian, for $z\in \mathfrak u$ we have
\[ 
0=d\omega(e_0,e_1,z)=-\omega([e_0,e_1],z)- \omega([z,e_0],e_1)=- \omega(v,z),
\]
where the last equality holds from $\omega(x,e_1)=0$ for all $x\in\hg$. Hence, $\omega _\mathfrak u (v,z)=0$ for all $z\in \mathfrak u$, which implies that $v=0$ since 
$\omega _\mathfrak u$ is non-degenerate. Finally, since $\omega$ is closed,  $\mathfrak u$ is abelian and $\omega(x,e_1)=0$, for all $x\in\mathfrak u$, and  it follows that  $\Pi \circ \left(\text{ad}_{e_0}|_\mathfrak u \right)\in \spg (\mathfrak u, \omega_\mathfrak{u})$. 
\end{proof}

\smallskip

\begin{remark}\label{remark:symp}
According to Proposition \ref{prop:symp}, given a symplectic structure $\omega$ on a $2n$-dimensional almost abelian Lie algebra $\ggo$,  there exists  a basis $\{e_0, e_1,\ldots,e_{2n-1}\}$ of $\ggo$ such that $\omega(e_0,e_1)=1$, $\hg=\text{span}\, \{e_i : i\geq 1\}$ and $\mathfrak u =\text{span}\, \{e_i : i\geq 2\}$. Let $\Omega$ be the matrix of $\omega_{\mathfrak u}$ in the given basis of $\mathfrak u$, then
the matrix $C$ of $\ad_{e_0}|_\hg$ in the basis $\{ e_i : i\geq 1\}$ is given by
\begin{equation}\label{sympl}
    C=\begin{pmatrix} 
c & w^T  \\
0 &  D 
\end{pmatrix}, \qquad \; \text{where } c \in \RR,\,  w \in \RR^{2(n-1)},\; D\in \spg_{\Omega}(2(n-1),\RR). 
\end{equation} 

Conversely, let $C$ be a matrix  as in \eqref{sympl}  where $\Omega\in M(2(n-1),\RR)$ is a non-singular skew-symmetric matrix. Consider the $2n$-dimensional almost abelian Lie algebra $\ggo_C= \RR e_0 \ltimes _C \RR^{2n-1}  $, that is, $[e_0, e_i]=Ce_i$, $i\geq 1$.   
Let $\mathfrak u =\text{span}\, \{e_i : i\geq 2\}$ and define  $\omega:=e^0\wedge e^1+ \omega_{\mathfrak u}$, where $\omega_{\mathfrak u}$ is the non-degenerate $2$-form on $\mathfrak u$ whose matrix in the given basis of $\mathfrak u$ is  $\Omega$. It is easy to see that $\omega$ is a non-degenerate and closed $2$-form on $\ggo_C.$ 
\end{remark}
\begin{corollary}
    \label{symp-v columna}
 Let $\ggo_A$ be a $2m$-dimensional almost abelian Lie algebra. Then $\ggo_A$   admits a symplectic structure if and only if $A$ is conjugate to 
\begin{equation} \label{eq:symp-matrix}
C= \begin{pmatrix}
      c   & 0 \\
      w   & D
\end{pmatrix}, \quad \text{where } c\in \RR , \,  w \in \mathbb R ^{2(m-1)}, \; D\in \spg_{\Omega}(2(m-1), \RR),   
\end{equation}
for some non-singular skew-symmetric $\Omega\in M(2(m-1), \mathbb R )$. 
\end{corollary}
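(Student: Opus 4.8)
The plan is to prove Corollary \ref{symp-v columna} as a direct consequence of the characterization given in Proposition \ref{prop:symp} together with Remark \ref{remark:symp}, essentially by transposing the matrix form \eqref{sympl} into the lower-triangular form \eqref{eq:symp-matrix}. First I would establish the forward direction: suppose $\ggo_A$ admits a symplectic structure $\omega$. By Remark \ref{remark:symp}, in the basis $\{e_0,e_1,\dots,e_{2m-1}\}$ adapted to $\omega$, the matrix of $\ad_{e_0}|_\hg$ has the upper-triangular form
\[
C'=\begin{pmatrix} c & w^T \\ 0 & D' \end{pmatrix},
\qquad c\in\RR,\; w\in\RR^{2(m-1)},\; D'\in\spg_{\Omega'}(2(m-1),\RR),
\]
for some non-singular skew-symmetric $\Omega'$, and $A$ is conjugate to $C'$. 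This is not yet of the desired lower-triangular shape \eqref{eq:symp-matrix}, so the key step is to pass to the transpose.

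The main device is Remark \ref{rem-symp-conj}: transposition sends $\spg_{\Omega_1}$-matrices to $\spg_{\Omega_2}$-matrices, where the two skew-symmetric forms are related by $\Omega_1=P^TJ_mP$ and $\Omega_2=P^{-1}J_m(P^{-1})^T$. Concretely, I would observe that $(C')^T=\minimatrix{c}{0}{w}{(D')^T}$ has exactly the lower-triangular shape of \eqref{eq:symp-matrix} with $c$ unchanged, $w$ moved to the lower-left block, and $D=(D')^T$; and by Remark \ref{rem-symp-conj} we have $D=(D')^T\in\spg_{\Omega}(2(m-1),\RR)$ for the appropriate non-singular skew-symmetric $\Omega$. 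It remains only to note that a matrix and its transpose are always conjugate in $GL$, so $A$ (being conjugate to $C'$) is also conjugate to $(C')^T$, which is of the form \eqref{eq:symp-matrix}. This gives the forward implication.

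For the converse, suppose $A$ is conjugate to a matrix $C$ of the form \eqref{eq:symp-matrix} with $D\in\spg_\Omega(2(m-1),\RR)$ for some non-singular skew-symmetric $\Omega$. Transposing once more, $C^T=\minimatrix{c}{w^T}{0}{D^T}$ is upper-triangular with $D^T\in\spg_{\Omega'}(2(m-1),\RR)$ for the corresponding $\Omega'$ (again by Remark \ref{rem-symp-conj}), hence $C^T$ is precisely of the form \eqref{sympl} in Remark \ref{remark:symp}. Since $A$ is conjugate to $C$, hence to $C^T$, the second half of Remark \ref{remark:symp} produces a symplectic structure on $\ggo_{C^T}\cong\ggo_A$, so $\ggo_A$ admits a symplectic structure. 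Because isomorphic almost abelian Lie algebras $\ggo_A\cong\ggo_B$ correspond to conjugate (up to scaling) matrices, the existence of a symplectic structure is a conjugacy-invariant property, which justifies reducing to the conjugate matrix throughout.

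I expect the only genuine subtlety, rather than an obstacle, to be bookkeeping the skew-symmetric form under transposition: one must be careful that ``$D\in\spg_\Omega$'' does \emph{not} imply ``$D^T\in\spg_\Omega$'' for the same $\Omega$, but only for a (generally different) conjugate form $\Omega'$, as recorded in Remark \ref{rem-symp-conj}. Since the statement of the corollary quantifies existentially over the skew-symmetric form in both directions, this mismatch is harmless and the equivalence goes through. No nontrivial computation is required beyond invoking the transpose-conjugacy of real matrices and the correspondences already established in Remark \ref{remark:symp} and Remark \ref{rem-symp-conj}.
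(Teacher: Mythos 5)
Your proposal is correct and follows essentially the same route as the paper's proof: both invoke Remark \ref{remark:symp} to get the upper-triangular form \eqref{sympl}, then use the fact that a real matrix is conjugate to its transpose together with Remark \ref{rem-symp-conj} to pass to the lower-triangular form \eqref{eq:symp-matrix} with the appropriately transformed skew-symmetric form. The paper phrases this as a single ``if and only if'' chain rather than two separate implications, but the content is identical.
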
 

\begin{proof}
According to Remark \ref{remark:symp},  $\ggo_A$ admits a symplectic structure if and only if $A$ is conjugate to 
$$
C= \begin{pmatrix}
      c   & v^T \\
      0   & B
\end{pmatrix}, \quad \text{where }c\in \RR , \,  v \in \mathbb R ^{2(m-1)}, \; B\in \spg_{\Omega_1}(2(m-1), \RR),
$$
with $\Omega_1=P^{T}J_m P$, for some $P\in GL(2(m-1), \RR)$. Since $C$ is conjugate to its transpose, then $A$ is conjugate to $
 \begin{pmatrix}
      c   & 0 \\
      v   & B^T
\end{pmatrix}$ where $B^T\in \spg_{\Omega_2}(2(m-1), \RR)$ with $\Omega_2=P^{-1}J_m (P^{-1})^T$ (see Remark \ref{rem-symp-conj}). The corollary follows by taking $\Omega= \Omega_2$.
\end{proof}
There is a classical theorem describing nilpotent orbits of the Lie algebra $ \spg (2m, \RR)$ (see \cite[Chapter 5]{CMcG}). 
This result was generalized in \cite[Theorem 2.4]{Mehl}  to any orbit of $ \spg (2m, \RR)$. The  next theorem, which will be useful for our purposes, is a consequence of  \cite[Theorem 2.4]{Mehl}.        
\begin{theorem}\label{prop:symplex-nonzero}
Let $\Omega\in M(2m, \RR)$ be a non-singular skew-symmetric matrix and let  $B\in M(2m, \RR)$. Then $B$  is conjugate in $GL(2m, \RR)$ to a matrix in  $ \spg _{\Omega} (2m, \RR)$ if and only if any $\lambda \in \emph{spec}\, (B)$ satisfies:
 \begin{align}
 \label{symp-real-eig} &\text{if }\lambda =0 \text{ and } \Lambda (B(0))=( n_1,\ldots , n_k; p_1, \ldots , p_k; t), \text{ then }  p_r \text{ is even for all odd } n_r\text{ and } t \text{ is}\\
\nonumber &  \text{even};\\
  \label{symp-complex-eig}  &  \text{if }  \lambda \notin i\RR , \text{ then  } -\lambda \in\emph{spec}\, (B)  \text{ and } \Lambda (B(-\lambda))=\Lambda (B(\lambda)).
  \end{align}
In particular, if $B$  is conjugate  to a matrix in  $ \spg _{\Omega} (2m, \RR)$ then its semisimple and nilpotent parts are conjugate to matrices in $ \spg _{\Omega_1} (2m, \RR)$ and $ \spg _{\Omega_2} (2m, \RR)$, respectively, for some $\Omega_1$ and $\Omega_2$.
  \end{theorem}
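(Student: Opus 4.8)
The plan is to deduce the stated equivalence from the description of the orbits of $\spg(2m,\RR)$ in \cite[Theorem 2.4]{Mehl}, after first eliminating the dependence on $\Omega$. By Remark \ref{rem-symp-conj}, if $\Omega = Q^T J_m Q$ with $Q\in GL(2m,\RR)$, then $D\in\spg_\Omega(2m,\RR)$ if and only if $QDQ^{-1}\in\spg(2m,\RR)$. Consequently $B$ is conjugate in $GL(2m,\RR)$ to an element of $\spg_\Omega(2m,\RR)$ if and only if it is conjugate to an element of $\spg(2m,\RR)=\spg_{J_m}(2m,\RR)$, so the condition on $B$ does not depend on the chosen non-singular skew-symmetric $\Omega$, and I may assume $\Omega=J_m$ throughout.

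Next I would invoke \cite[Theorem 2.4]{Mehl}, which describes the orbits of $\spg(2m,\RR)$ and hence determines exactly which $GL(2m,\RR)$-conjugacy classes contain a Hamiltonian representative. The guiding principle is elementary: $D\in\spg(2m,\RR)$ means $J_mD$ is symmetric, equivalently $J_m^{-1}D^TJ_m=-D$, so $D$ is similar to $-D$; therefore the spectrum of any $D\in\spg(2m,\RR)$ is invariant under $\lambda\mapsto-\lambda$, with matching Jordan data at $\lambda$ and $-\lambda$. The real content is then to transcribe Mehl's canonical form into the real Jordan data encoded by the tuples $\Lambda(B(\lambda))$, treating separately the three types of eigenvalue.

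For an eigenvalue $\lambda\notin i\RR$ (in particular every nonzero real eigenvalue and every genuinely complex one) the $\lambda\mapsto-\lambda$ symmetry is a genuine constraint and yields precisely condition \eqref{symp-complex-eig}: $-\lambda$ must be an eigenvalue with $\Lambda(B(-\lambda))=\Lambda(B(\lambda))$. For $\lambda=0$ the eigenvalue is self-paired, and the restriction of the symplectic form to the generalized $0$-eigenspace forces the partition $\pi_0$ of $B$ at $0$ to be a nilpotent $\spg$-partition, i.e. every odd part occurs with even multiplicity; in the notation here this is exactly condition \eqref{symp-real-eig} (parts of size $1$ have multiplicity $t$, so $t$ is even, and each odd $n_r\ge 3$ has even multiplicity $p_r$). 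The case requiring care is $\lambda\in i\RR\setminus\{0\}$: here no condition is imposed, because $-\lambda=\overline{\lambda}$ is already an eigenvalue with identical Jordan data by the reality of $B$, so the symmetry is automatic and, unlike at $0$, the conjugate pairing already supplies all the structure Mehl's form demands (for instance one checks directly that $\mathcal{C}(0,d)\in\spg(2,\RR)$). Confirming that Mehl's list imposes nothing further at purely imaginary eigenvalues, and thereby making the dichotomy between $\lambda=0$ and $\lambda\in i\RR\setminus\{0\}$ explicit, is the main obstacle.

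Finally, for the ``in particular'' assertion I would use the Jordan--Chevalley decomposition $B=B_s+B_n$ and verify that each summand meets the criterion just established, so that each is conjugate into some $\spg_{\Omega_i}(2m,\RR)$. Writing $\pi_\lambda$ for the multiset of Jordan block sizes of $B$ at $\lambda\in\CC$, the partition of the nilpotent matrix $B_n$ is the union $\bigsqcup_\lambda \pi_\lambda$. Since $\pi_{-\lambda}=\pi_\lambda$ for every nonzero $\lambda$ (for $\lambda\notin i\RR$ by \eqref{symp-complex-eig}, and for $\lambda\in i\RR\setminus\{0\}$ by conjugation), the nonzero eigenvalues contribute each block size with even multiplicity, while $\pi_0$ already has its odd parts with even multiplicity by \eqref{symp-real-eig}; hence $B_n$ has every odd part with even multiplicity, so it satisfies \eqref{symp-real-eig} and, having only the eigenvalue $0$, satisfies \eqref{symp-complex-eig} vacuously. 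For $B_s$ all Jordan blocks have size $1$, the algebraic multiplicity $m_\lambda$ of each $\lambda$ coincides with that of $B$, so $m_{-\lambda}=m_\lambda$ gives \eqref{symp-complex-eig}, and $m_0$, being the sum of the parts of $\pi_0$, is even, giving \eqref{symp-real-eig}. Thus both $B_s$ and $B_n$ are conjugate to matrices in suitable $\spg_{\Omega_1}(2m,\RR)$ and $\spg_{\Omega_2}(2m,\RR)$, as claimed.
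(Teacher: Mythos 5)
Your proposal is correct and takes essentially the same route as the paper: the equivalence itself is obtained by invoking Mehl's Theorem 2.4 (after normalizing $\Omega$ to $J_m$ via Remark \ref{rem-symp-conj}), and the final assertion by the Jordan--Chevalley decomposition together with a check that conditions \eqref{symp-real-eig} and \eqref{symp-complex-eig} pass to $B_s$ and $B_n$. Your verification of that last step simply spells out what the paper dismisses as ``easily verified,'' so there is nothing essentially different in the two arguments.
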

  \begin{proof} 
  The first part of the theorem follows from \cite[Theorem 2.4]{Mehl}. 
  
  To prove the last assertion, decompose $B$  as $B=B_s+B_n$ with $B_s$ semisimple and $B_n$ nilpotent. It is easily verified that if $B$ satisfies conditions \eqref{symp-real-eig} and \eqref{symp-complex-eig}, then so do $B_n$ and $B_s$, hence  the last assertion follows.
  \end{proof}

\subsection{The nilpotent case}\label{sec:nilp-symp}
The next well-known lemma (see for instance, \cite[Theorem 5.1.3]{CMcG}) will be useful to prove existence of symplectic structures on certain almost  abelian Lie algebras. 
\begin{lemma}\label{J2k} The matrix $\J_{2k}$ is conjugate to a matrix in $\spg (2k, \RR)$ for all $k\geq 1$.
    \end{lemma}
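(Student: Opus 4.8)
The plan is to exhibit, in a suitable basis, a nondegenerate skew-symmetric $\Omega$ for which $\J_{2k}$ is infinitesimally symplectic, and then to transport the conclusion to the standard $\spg(2k,\RR)$ using Remark \ref{rem-symp-conj}. First I would let $N$ be the endomorphism of $\RR^{2k}$ whose matrix in a basis $\{v_1,\dots,v_{2k}\}$ equals $\J_{2k}$, so that $Nv_i=v_{i+1}$ for $i<2k$ and $Nv_{2k}=0$. The goal is to produce a skew form $\Omega$ with $\Omega(Nx,y)+\Omega(x,Ny)=0$, i.e. such that $N\in\spg_\Omega(2k,\RR)$; writing $a_{ij}=\Omega(v_i,v_j)$ and using $Nv_i=v_{i+1}$ (with $v_{2k+1}:=0$), this compatibility condition is exactly the system of relations $a_{i+1,j}=-a_{i,j+1}$, valid for all $i,j$ with out-of-range entries read as zero.

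Next I would solve this system with a concentrated ansatz. The relations force each entry to be constant up to sign along the anti-diagonals $i+j=\text{const}$, so I would try $a_{ij}=0$ unless $i+j=2k+1$, with $a_{i,2k+1-i}=(-1)^{i-1}$ on the main anti-diagonal. I would then check three things: skew-symmetry, which holds since $a_{i,2k+1-i}=(-1)^{i-1}=-(-1)^{2k-i}=-a_{2k+1-i,i}$; nonsingularity, which is clear because the Gram matrix is anti-diagonal with entries $\pm1$; and the compatibility relation, which on the only anti-diagonal $i+j=2k$ where both sides can be nonzero reduces to $(-1)^i+(-1)^{i-1}=0$. This shows $\J_{2k}=N\in\spg_\Omega(2k,\RR)$, and by Remark \ref{rem-symp-conj} the subalgebra $\spg_\Omega(2k,\RR)$ is $GL(2k,\RR)$-conjugate to $\spg(2k,\RR)$, so $\J_{2k}$ is conjugate to a matrix in $\spg(2k,\RR)$, as claimed.

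The only delicate point—the main obstacle—is reconciling nondegeneracy with the rigid sign pattern imposed by the compatibility relations, together with the boundary behaviour coming from $Nv_{2k}=0$; concentrating $\Omega$ on the main anti-diagonal resolves both at once, since there the identity collapses to $(-1)^i+(-1)^{i-1}=0$ and the out-of-range indices never generate a constraint. Alternatively, and much more quickly, the statement follows directly from Theorem \ref{prop:symplex-nonzero}: the only eigenvalue of $\J_{2k}$ is $0$ with $\Lambda(\J_{2k})=(2k;1;0)$, so the single part $n_1=2k$ is even and $t=0$ is even, whence condition \eqref{symp-real-eig} holds vacuously and \eqref{symp-complex-eig} is empty; taking $\Omega=J_k$ gives $\spg_\Omega(2k,\RR)=\spg(2k,\RR)$ at once. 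I would keep the explicit construction for self-containedness and cite Theorem \ref{prop:symplex-nonzero} as the conceptual explanation.
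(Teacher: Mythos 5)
Your proof is correct, but it takes a genuinely different route from the paper's. The paper works inside the standard symplectic algebra: it exhibits the explicit matrix $N=\minimatrix{\J_k}{I_k}{0_k}{-\J_k^T}\in\spg(2k,\RR)$ and proves it is a regular nilpotent (hence conjugate to $\J_{2k}$) by computing the powers $N^r$ inductively and checking that the upper-right block $B_{2k-1}=(-1)^{k-1}\J_k^{k-1}(\J_k^T)^{k-1}$ is nonzero, so $N^{2k-1}\neq 0$. You instead keep $\J_{2k}$ fixed and build a nondegenerate skew form adapted to it, namely the anti-diagonal $\Omega$ with entries $a_{i,2k+1-i}=(-1)^{i-1}$; the verification that $\J_{2k}\in\spg_\Omega(2k,\RR)$ is degree-one in the matrix (the relation $a_{i+1,j}=-a_{i,j+1}$, with the boundary cases $i=2k$ or $j=2k$ handled correctly by your out-of-range convention), and Remark \ref{rem-symp-conj} then transports the conclusion to the standard $\spg(2k,\RR)$. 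Your route avoids all matrix-power computations and has the side benefit of making visible where evenness of the size enters: the anti-diagonal $i+j=2k+1$ misses the main diagonal precisely because the size is even, which is what allows $\Omega$ to be simultaneously skew and nonsingular. The paper's route, in exchange, never leaves the standard model $\spg(2k,\RR)$ and needs no appeal to the congruence of skew forms. Your second, faster argument via Theorem \ref{prop:symplex-nonzero} with $\Lambda(\J_{2k})=(2k;1;0)$ is also valid and non-circular, since that theorem is imported from \cite{Mehl} independently of this lemma; it is the conceptually cleanest explanation, but it rests on much heavier machinery, which is presumably why the paper retains an elementary proof of this classical fact (cf. \cite[Theorem 5.1.3]{CMcG}).
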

    \begin{proof} Consider the following nilpotent matrix:
    \[ N=\begin{pmatrix}
        X& I_k\\
        0_k& -X^T
    \end{pmatrix}\in \spg (2k, \RR), \quad X= \J _k.
    \]
  
    We will show that $N^{2k-1}\neq 0$, or equivalently, $N$ is conjugate to $\J_{2k}$. 
    
    In order to prove that $N^{2k-1}\neq 0$, we compute $N^r$ for $r\geq 2$. It can be shown inductively that 
    \[N^r =\begin{pmatrix} X^r & B_r\\
    0_k & (-X^T)^r        
    \end{pmatrix},\;\; \text{where } B_r=\sum _{i=1}^r (-1)^{i-1} X^{r-i}(X^T)^{i-1}.  \]
    For $r=2k-1$, the  terms in the expression of $B_{2k-1}$ corresponding to $i\neq k$ vanish. Hence, $B_{2k-1}=(-1)^{k-1} X^{k-1}(X^T)^{k-1}$, which is the matrix whose $(k,k)$ coefficient is  $(-1)^{k-1} $ and the remaining entries are equal to zero. In particular,   $B_{2k-1}\neq 0$, which implies that  $N^{2k-1}\neq 0$. Therefore, $N$ is conjugate to $\J_{2k}$, and the lemma follows.
    \end{proof}
Let $\ggo=\RR e_0 \ltimes \hg $ be an even dimensional almost abelian Lie algebra. We prove next  that if $\ggo$ 
is $k$-step nilpotent   and $k=2$, $3$ or $4$, then $\ggo$  always admits a symplectic structure. This is no longer true for $k=5$, as Example \ref{ex:5-step-no-symp} below shows. 

\begin{theorem} \label{thm:k-step-symp} Let $\ggo_A=\RR e_0 \ltimes_A \RR^{2n-1} $ be a $k$-step nilpotent almost abelian  Lie algebra with  $k=2, \, 3$ or $4$.  Then $\ggo_A$ admits a symplectic structure. 
\end{theorem}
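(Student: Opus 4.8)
The plan is to apply Corollary~\ref{symp-v columna}. Since $\ggo_A$ is nilpotent, it suffices to exhibit $A$ as conjugate to a matrix of the form \eqref{eq:symp-matrix} with $c=0$, that is, to $\begin{pmatrix} 0 & 0 \\ w & D \end{pmatrix}$ with $w\in\RR^{2(n-1)}$ and $D$ a nilpotent element of $\spg_\Omega(2(n-1),\RR)$ for some non-singular skew-symmetric $\Omega$. By Theorem~\ref{prop:symplex-nonzero}, a nilpotent matrix $D$ of size $2(n-1)$ is conjugate to such a symplectic matrix precisely when, writing $\Lambda(D)=(m_1,\ldots,m_\ell;q_1,\ldots,q_\ell;t)$, every odd $m_r$ has even multiplicity $q_r$ and $t$ is even; call this condition $(\mathrm S)$. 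Hence it is enough to find, for each $k$, a nilpotent $D$ satisfying $(\mathrm S)$ together with a vector $w$ such that $\begin{pmatrix} 0 & 0 \\ w & D \end{pmatrix}$ is conjugate to $A$: if $PDP^{-1}\in\spg_\Omega$, conjugating by $\begin{pmatrix} 1 & 0 \\ 0 & P \end{pmatrix}$ turns this matrix into $\begin{pmatrix} 0 & 0 \\ Pw & PDP^{-1} \end{pmatrix}$, and Corollary~\ref{symp-v columna} then produces a symplectic structure on $\ggo_A$.

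First I would record that the largest Jordan block of the nilpotent $A$ has size exactly $k$ (this is how the Jordan form is read off from the step in the proofs of Propositions~\ref{prop:2-step-complex} and~\ref{prop:complex-3step}), so up to conjugacy $A=\J_2^{\oplus p}\oplus 0_s$ when $k=2$, $A=\J_3^{\oplus q_1}\oplus\J_2^{\oplus q_2}\oplus 0_s$ when $k=3$, and $A=\J_4^{\oplus r_1}\oplus\J_3^{\oplus r_2}\oplus\J_2^{\oplus r_3}\oplus 0_s$ when $k=4$, with the top multiplicity positive. Since $2n-1$ is odd, comparing parities forces $s$ odd when $k=2$, $q_1+s$ odd when $k=3$, and $r_2+s$ odd when $k=4$. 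The only odd-size blocks available for $k\le 4$ are the trivial ones (counted by $s$) and the size-$3$ ones (counted by $q_1$ or $r_2$), which are exactly the counts constrained by $(\mathrm S)$; moreover the parity relations guarantee that exactly one of these two counts is odd. This is the structural heart of the statement and explains why it fails at $k=5$: there a third odd class, $\J_5$, appears and a single vector $w$ can no longer repair all parities at once.

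Then I would treat the cases using Proposition~\ref{(v,B)} and Corollary~\ref{posibles_C}, which describe the Jordan type of $\begin{pmatrix} 0 & 0 \\ w & D \end{pmatrix}$: a suitable $w$ appends one trivial block, converts one trivial block into a $\J_2$, or enlarges one block by one. I choose $D$ to be $A$ with exactly one such move undone, arranged so that $(\mathrm S)$ holds. For $k=2$, take $D=\J_2^{\oplus(p-1)}\oplus 0_{s+1}$ (so $t=s+1$ is even and all nontrivial blocks have even size) and let $w$ point into the zero block. For $k=3$, if $q_1$ is even take $D=\J_3^{\oplus q_1}\oplus\J_2^{\oplus q_2}\oplus 0_{s-1}$ with $w=0$, and if $q_1$ is odd take $D=\J_3^{\oplus(q_1-1)}\oplus\J_2^{\oplus(q_2+1)}\oplus 0_s$ and choose $w$ enlarging a $\J_2$ to a $\J_3$. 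For $k=4$, if $r_2$ is even take $D=\J_4^{\oplus r_1}\oplus\J_3^{\oplus r_2}\oplus\J_2^{\oplus r_3}\oplus 0_{s-1}$ with $w=0$, and if $r_2$ is odd take $D=\J_4^{\oplus(r_1-1)}\oplus\J_3^{\oplus(r_2+1)}\oplus\J_2^{\oplus r_3}\oplus 0_s$ and enlarge a $\J_3$ to a $\J_4$. In each subcase the parity relations above make $(\mathrm S)$ hold for $D$.

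The main obstacle is purely the parity bookkeeping: in every subcase one must verify both that the selected $D$ satisfies $(\mathrm S)$ and that the prescribed move of $w$ lands exactly on $\Lambda(A)$, paying attention to the degenerate situations where a top multiplicity equals $1$ (so a block type is absent from $D$ and one instead enlarges the top block of $D$) or where $q_2$, $r_3$ or $s$ vanish. These verifications are routine consequences of Corollary~\ref{posibles_C}; once the conjugacy $\begin{pmatrix} 0 & 0 \\ w & D \end{pmatrix}\sim A$ is confirmed, the reduction of the first paragraph, via Theorem~\ref{prop:symplex-nonzero} and Corollary~\ref{symp-v columna}, completes the proof.
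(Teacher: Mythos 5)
Your proposal is correct, but it proves the theorem by a genuinely different route than the paper does. The paper's proof of Theorem~\ref{thm:k-step-symp} is a bare-hands construction: it never invokes Theorem~\ref{prop:symplex-nonzero} or Proposition~\ref{(v,B)}, but instead exhibits the Jordan form of $A$ directly in the shape \eqref{sympl}, using Remark~\ref{suma w_i} together with the explicit memberships $\J_2\in\spg(2,\RR)$, $0_{2k}\in\spg(2k,\RR)$, $(\J_3\oplus-\J_3^T)\in\spg(6,\RR)$ and $\J_4\in\spg(4,\RR)$ (the last via Lemma~\ref{J2k}); the lone odd-sized leftover block ($0_1$ or a $\J_3$) is absorbed into the distinguished first row, exactly as your $w$ does, but with the symplectic matrix $D$ written down concretely rather than certified abstractly. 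Your argument instead runs through the parity criterion of Theorem~\ref{prop:symplex-nonzero} (Mehl's theorem) combined with the Jordan-type bookkeeping of Proposition~\ref{(v,B)}/Corollary~\ref{posibles_C}; this is precisely the strategy the paper deploys later to prove the full classification, Theorem~\ref{classifsymp}, and indeed your proof is essentially that theorem specialized to steps $2,3,4$ --- it is how the paper itself derives Corollaries~\ref{6-step nilp}, \ref{even-step} and \ref{odd-step}. What each approach buys: the paper's construction is self-contained (no appeal to the cited orbit classification) and produces explicit symplectic data, while yours is shorter on the constructive side, scales immediately to the general classification, and makes transparent the structural reason for failure at $k=5$ (a third odd block size, as in Example~\ref{ex:5-step-no-symp}), which the paper only exhibits by an ad hoc computation. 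Your parity checks and the degenerate cases ($p=1$, $q_1=1$, $r_1=1$, or vanishing $q_2,r_3,s$) all go through under Corollary~\ref{posibles_C}, so the proof is complete as sketched.
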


\begin{proof}
Throughout the proof, we will use several times Remark \ref{suma w_i} and the facts that $(\J_3 \oplus -\J_3^T) \in \spg \left(6, \RR\right)$, $\J_2 \in \spg \left(2, \RR\right)$, $0_{2k} \in \spg \left(2k, \RR\right)$ (see \eqref{eq:sympl}) and $\J_4 \in \spg(4,\RR)$ (by Lemma~ \ref{J2k}). 

Assume first that $k=2$, hence, the minimal polynomial of $A$ is $x^2$ and the Jordan normal form of  $A$ is  $\J_2  ^{ \oplus r}\oplus 0_s $, for $r,\, s>0$. It follows that $s=2k+1$ with $k= n-r-1$. 

If $r$ is even, then 
by a change of basis, we obtain that $A$ is conjugate to $C= 0_1\oplus \J_2 ^{\oplus r}\oplus 0_{2k}$, and clearly this has the form \eqref{sympl} with $c=0$, $w=0$ and $D=\J_2 ^{\oplus r}\oplus 0_{2k}\in \spg_{\Omega} \left(2(n-1), \RR\right)$, where $\Omega=J_1^{\oplus r}\oplus J_k$.

If $r$ is odd, then 
by a change of basis, we obtain that $A$ is conjugate to $C= \J_2^T \oplus 0_{2k+1}\oplus \J_2 ^{\oplus (r-1)}$, which has the form \eqref{sympl} since
\[
C=\begin{pmatrix}
    0 & \epsilon_1 ^T\\
    0 & D
\end{pmatrix},
\] with $D= 0_{2k+2}\oplus \J_2 ^{\oplus (r-1)} \in \spg_{\Omega} \left(2(n-1), \RR\right)$, where $\Omega=J_{k+1}\oplus J_1 ^{\oplus (r-1)}$.  

If $k=3$, the Jordan normal form of  $A$ is  $\J_3  ^{ \oplus q}\oplus\J_2  ^{ \oplus r}\oplus 0_s $, for $q>0$.

If $q$ is even, $q=2l$, then $s$ is odd, $s=2k+1$. Therefore, $A$ is conjugate to $C = 0_1 \oplus (\J_3 \oplus -\J_3^T)^{ \oplus l}\oplus\J_2  ^{ \oplus r}\oplus 0_{2k}$, which has the form \eqref{sympl} with $c=0$, $w=0$ and $D=(\J_3 \oplus -\J_3^T)^{ \oplus l}\oplus\J_2  ^{ \oplus r}\oplus 0_{2k} \in \spg_{\Omega} \left(2(n-1), \RR\right)$, where $\Omega=J_{3}^{\oplus l}\oplus J_1 ^{\oplus r}\oplus J_k$. 

If $q$ is odd, $q-1=2l$, then $s$ is even, $s=2k$. Then, $A$ is conjugate to $C= \J_3^T \oplus\J_2  ^{ \oplus r} \oplus (\J_3 \oplus -\J_3^T)^{ \oplus l}\oplus 0_{2k}$ and this has the form \eqref{sympl}:
\[
C=\begin{pmatrix}
    0 & \epsilon_1 ^T\\
    0 & D
\end{pmatrix}
\] 
where 
$D= \J_2  ^{ \oplus (r+1)}\oplus (\J_3 \oplus -\J_3^T)^{ \oplus l}\oplus 0_{2k}\in \spg_{\Omega} \left(2(n-1), \RR\right)$, where $\Omega=J_1^{\oplus (r+1)}\oplus J_3^{ \oplus l}\oplus J_{k}$. 

Finally, if $k=4$,  the Jordan normal form of  $A$  is  
$
\J_4^{\oplus p}\oplus \J_3^{\oplus q} \oplus \J_2^{\oplus r}\oplus 0_s$, for $p>0$.

Since $A \in M(2n-1,\RR)$ it follows that either
$q$ is  even or $s$ is even, but not both at the same time.

If $q$ is even, $q=2l$, then $s$ is odd, $s=2k+1$. Therefore, $A$ is conjugate to $C = 0_1 \oplus\J_4  ^{ \oplus p}\oplus (\J_3 \oplus -\J_3^T)^{ \oplus l}\oplus\J_2  ^{ \oplus r}\oplus 0_{2k}$, which has the form \eqref{sympl} with $c=0$, $w=0$ and $D=\J_4  ^{ \oplus p}\oplus (\J_3 \oplus -\J_3^T)^{ \oplus l}\oplus\J_2  ^{ \oplus r}\oplus 0_{2k} \in \spg_{\Omega} \left(2(n-1),\RR\right)$, where $\Omega=J_2 ^{\oplus p}\oplus J_3 ^{\oplus l}\oplus J_1^{\oplus r}\oplus  J_k$.

If $q$ is odd, $q=2l+1$, then $s$ is even $s=2k$. Then, $A$ is conjugate to $C= \J_3^T \oplus  \J_2 ^{ \oplus r} \oplus\J_4  ^{ \oplus p} \oplus (\J_3 \oplus -\J_3^T)^{ \oplus l}\oplus 0_{2k}$ and this take the form \eqref{sympl}:
\[
C=\begin{pmatrix}
    0 & \epsilon_1 ^T\\
    0 & D
\end{pmatrix}
\] 
with $D=
\J_2^{\oplus r+1}\oplus  \J_4  ^{ \oplus p} \oplus   (\J_3 \oplus -\J_3^T)  ^{ \oplus l}\oplus 0_{2k} \in \spg_{\Omega} \left(2(n-1),\RR\right)$, where $\Omega=J_1^{\oplus (r+1)}\oplus  J_2 ^{\oplus p}\oplus J_3 ^{\oplus l}\oplus  J_k$.
\end{proof}

We show below  an example of   a 5-step nilpotent almost abelian Lie algebra  which does not admit a symplectic structure. 
\begin{example}\label{ex:5-step-no-symp}
    Let $\ggo_M=\RR e_0 \ltimes_M \RR^9 $ be the $10$-dimensional almost abelian Lie algebra with  $M=\J_5  \oplus \J_3  \oplus 0_1$. We claim that $\ggo_M$ does not admit a symplectic structure. Indeed, let $\mathcal B=\{e_0,\dots, e_9\}$ be the basis of $\ggo _M$ such that $M$ is the matrix of  $\ad_{e_0}|_{\RR^9}$ with respect to $\{e_1,\dots, e_9\}$ and denote by $\{e^0, \dots , e^9\}$ the dual basis of $\mathcal B$. 
  
    We have that, 
$$
\begin{cases}
\begin{array}{cccc}
 d e^k &= &-e^0 \wedge e^{k+1}, & \text{for } k=1, \ldots, 7,  \; k \neq 5, \\
de^k &= &0, & \text{for } k= 0, 5, 8,9.
\end{array}
\end{cases}
$$
Hence, any closed 2-form on $\ggo$ can be written as 
$ \omega = \sum _{j=1} ^9 a_j e^{0j} + \beta$, where $\beta$ is given by:
$$
\beta= b_1 \,e^{25} - b_1 \,e^{34} + b_2\, e^{38} + b_3\,  e^{45} - b_2 \,e^{47} +  b_4\, e^{48} + 
b_2 \,e^{56} - b_4\,  e^{57} + b_5 \, e^{58} + b_6 \, e^{59} + b_7 \, e^{78} + b_8 \, e^{89},
$$
with $a_j, b_k \in \RR$, for $j=1, \ldots, 9$ and $k=1, \ldots, 8$.  Here, $e^{ij}$  denotes the 2-form $e^i \wedge e^j$.

It follows that all these 2-forms are degenerate. In fact, if $a_1 = 0$ we have that $\omega (e_1, \cdot) \equiv 0$. If $a_1 \neq 0$ and $b_1 \neq 0$, there exists $x= -\dfrac{a_2 b_2 + a_6 b_1 }{a_1 b_1}\, e_1 + \dfrac{b_2}{b_1}\, e_2 + e_6 \neq 0$  such that $\omega (x, \cdot) \equiv 0$. Finally, if $b_1 =0$, $x= -\dfrac{a_2}{a_1}\, e_1 + e_2\neq 0$ satisfies $\omega (x, \cdot) \equiv 0$.  
\end{example}
Given a nilpotent matrix $M$, in the following theorem we give necessary and sufficient conditions on the tuple $\Lambda (M)$ to obtain pairwise non-isomorphic nilpotent almost abelian Lie algebras $\ggo_M$ admitting symplectic structures. We exclude the zero matrix $M$, since when $M=0$, the Lie algebra $\ggo_M$ is  abelian and it  admits a symplectic structure. 

\begin{theorem}\label{classifsymp} 
Let $\ggo_M=\RR e_0  \ltimes_M \RR^{2n-1}$ be the   $2n$-dimensional  nilpotent almost abelian Lie algebra with $\Lambda(M)=(n_1,\ldots, n_k; p_1, \ldots, p_k;t)$. Then  $\ggo_M$ admits a symplectic structure if and only if one of the following conditions holds:

\begin{enumerate} 
\item[$\ri$] $t$ is odd and $p_i $ is even for all odd $n_i$;
\item[$\rii$] $t$ is even and there exists a unique $\ell \in \{1, \dots, k\}$ such that $n_\ell$ and $p_\ell$ are odd.
\end{enumerate}
\end{theorem}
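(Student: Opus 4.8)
The plan is to convert the existence of a symplectic structure into a condition on the Jordan type of $M$. By Corollary~\ref{symp-v columna}, $\ggo_M$ admits a symplectic structure if and only if $M$ is conjugate to $C=\begin{pmatrix} c & 0 \\ w & D\end{pmatrix}$ with $D\in\spg_{\Omega}(2(n-1),\RR)$ for some non-singular skew-symmetric $\Omega$; since $M$ is nilpotent, so is $C$, forcing $c=0$ and $D$ nilpotent. Theorem~\ref{prop:symplex-nonzero} then characterizes the nilpotent $D$ that lie (up to conjugacy) in some $\spg_{\Omega}$ as exactly those with $\Lambda(D)=(m_1,\dots,m_j;q_1,\dots,q_j;s)$ having $s$ even and $q_r$ even for every odd $m_r$. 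Thus everything reduces to understanding which tuples $\Lambda(M)$ arise from such a $D$ under the bordering $D\mapsto\begin{pmatrix}0&0\\ w & D\end{pmatrix}$, and matching the answer with $\ri$ and $\rii$.

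For the forward implication I would put $D$ in Jordan form and read off $\Lambda(M)=\Lambda(C)$ from Corollary~\ref{posibles_C}, whose four possibilities I analyze starting from a symplectic $\Lambda(D)$. In case $\raa$ the free part becomes $s+1$ (odd) and all multiplicities are unchanged, giving $\ri$; in case $\rcc$ and in the special tuple $(2;1;2n-3)$ one appends an even-sized block and the free part $s-1$ (resp. $2n-3$) is odd, again giving $\ri$. The substantive case is $\rbb$, where one copy of a block of size $m_d$ is promoted to size $m_d+1$. As $m_d$ and $m_d+1$ are consecutive, exactly one of them is odd, and the promotion makes exactly one multiplicity attached to an odd size odd (namely one of $q_{d-1}+1$, $q_d-1$, or the new multiplicity $1$), producing a single odd pair $(n_\ell,p_\ell)$ while $t=s$ remains even; all other odd-sized multiplicities are untouched and stay even, which yields $\rii$ together with its uniqueness clause.

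For the converse I build $D$ explicitly. Assuming $\ri$, I take $D=\J_{n_1}^{\oplus p_1}\oplus\cdots\oplus\J_{n_k}^{\oplus p_k}\oplus 0_{t-1}$; its tuple has even free part $t-1$ and the required parities, so $D\in\spg_{\Omega}$ by Theorem~\ref{prop:symplex-nonzero}, and $M\sim 0_1\oplus D=\begin{pmatrix}0&0\\0&D\end{pmatrix}$ by case $\raa$ of Corollary~\ref{posibles_C}. Assuming $\rii$, I use that $n_\ell$ odd forces $n_\ell\geq 3$ and demote one copy of $\J_{n_\ell}$ to $\J_{n_\ell-1}$, letting $D$ be the resulting nilpotent matrix: the multiplicity of $n_\ell$ drops to $p_\ell-1$ (even) while that of the even size $n_\ell-1$ merely increases (and carries no constraint), so $D$ again satisfies the parity conditions and lies in some $\spg_{\Omega}$. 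Ordering the summands of $D$ by decreasing size and taking $v=\epsilon_1^{d}\notin\Im D$ to be the cyclic generator of the first $\J_{n_\ell-1}$ summand, Proposition~\ref{(v,B)}\,\rbb gives that $\begin{pmatrix}0&0\\ v & D\end{pmatrix}$ is conjugate to $D$ with that block promoted back to size $n_\ell$, i.e. to $M$; finally, conjugating by a block matrix $\diag(1,P)$ to swap the Jordan $D$ for its symplectic representative lets me invoke Corollary~\ref{symp-v columna}.

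The main obstacle I anticipate is the parity bookkeeping in case $\rbb$/condition $\rii$ and its degenerate sub-cases: $p_\ell=1$, where the demotion deletes the size-$n_\ell$ block and the promotion must recreate it (possibly as the new largest block when $\ell=1$); and $n_\ell-1=n_{\ell+1}$, where the demoted block merges with an existing summand. In each I must check that Proposition~\ref{(v,B)}\,\rbb still promotes the intended block and that no second odd pair is created, which is precisely what underpins the uniqueness in $\rii$.
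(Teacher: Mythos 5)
Your proposal is correct and takes essentially the same approach as the paper's proof: reduction via Corollary \ref{symp-v columna}, the parity characterization of nilpotent matrices in $\spg_\Omega$ from Theorem \ref{prop:symplex-nonzero}, the bordering analysis of Corollary \ref{posibles_C} for the forward direction (with the same case-by-case parity bookkeeping in case \rbb), and the same explicit constructions of $D$ for the converse (append a zero row in case \ri, demote one odd-sized block in case \rii). The only cosmetic difference is that to re-promote the demoted block you invoke Proposition \ref{(v,B)} \rbb, whereas the paper simply uses the direct identity $\J_{n_\ell}=\minimatrix{0}{0}{\epsilon_1}{\J_{n_\ell-1}}$, and you make explicit the $\diag(1,P)$ conjugation that the paper leaves implicit.
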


\begin{proof}
We suppose that $\ggo_M$ admits a symplectic structure. Hence, by Corollary \ref{symp-v columna}, $M$ is conjugate to     $C= \begin{pmatrix}
0   & 0 \\
w  & D
   \end{pmatrix}$, where $w \in \mathbb R ^{2(n-1)}$ and $D\in \spg(2(n-1), \RR)$. 
   Since $C$ is a non-zero nilpotent matrix, we have that,  either $D=0$ and $w\neq 0$ or $D$ is a nonzero nilpotent matrix. If $D=0$, Corollary \ref{posibles_C} implies that $C$, hence $M$, is conjugate to $\J_2\oplus 0_{2n-3}$. Therefore, $M$ satisfies condition (i).
   Now, we suppose that $D$ is a nonzero nilpotent matrix.    

   Then, 
   Theorem \ref{prop:symplex-nonzero} implies that $D$ is conjugate to 
    $$
    \J _{m_1}^{\oplus q_1}  \oplus \cdots \oplus \J _{m_r}^{\oplus q_r} \oplus  0_{s}
    $$ 
    with $m_1 > \cdots > m_r\geq 2$, $q_i \geq 1$, for all $i=1, \ldots, r$, $s\geq 0$ and where $s$ is even and $q_i$ is even for all $i$ such that $m_i$ is odd. Applying Corollary \ref{posibles_C} we obtain that $C$ (in particular, $M$) is conjugate to one of the three possibilities:
\begin{enumerate}[(a)]
        \item $0_1 \oplus D$;
        \item $\J _{m_1}^{\oplus q_1}  \oplus \cdots \oplus \J_{m _{d-1}}^{\oplus q_{d-1}} \oplus \J_{m_{d} + 1}\oplus \J_{m_{d }}^{\oplus q_{d}-1} \oplus\J_{m_{d +1}}^{\oplus q_{d+1}} \oplus \cdots \oplus  \J _{m_r}^{\oplus q_r} \oplus  0_s$, for $d \in \{ 1, \dots, r\}$;
        \item $\J _{m_1}^{\oplus q_1}  \oplus \cdots \oplus \J _{m_r}^{\oplus q_r} \oplus \J _{2}\oplus  0_{s-1}$.
    \end{enumerate}      
    
By  uniqueness of the Jordan normal form, cases (a) and (c) imply that  condition (i) of the theorem is satisfied.   

Next we consider that $M$ is conjugate to the matrix given in the case (b). In this case  $t$ is even, since $t=s$.
We suppose that $m_d$ is even, so $m_d +1$ is odd and $m_{d-1} \geq m_{d} + 1$. If $m_{d-1} > m_{d} + 1$, then $\J_{m_{d} + 1}$ appears only once. 
If $m_{d-1}=m_{d} + 1$, then $m_{d-1}$ is odd. So Theorem \ref{prop:symplex-nonzero} implies that $q_{d-1}$ is even. In this way, $\J_{m_{d} + 1}$ appears an odd number of times. Now, we suppose that $m_d$ is odd. By Theorem \ref{prop:symplex-nonzero}, $q_d$ is even and Corollary \ref{posibles_C} says that  $\J_{m_{d}}$ appears $q_d - 1$ times, that is, an odd number of times. Finally, the uniqueness in condition (ii) follows from  Theorem \ref{prop:symplex-nonzero}.   

\vspace{0.25cm}

Conversely, suppose first that $M$ satisfies condition (i). Then $M$ is conjugate to  
$\begin{pmatrix} 
0 & 0 \\
0 & D
\end{pmatrix}$, where 
$$
D= 0_{t-1}\oplus \J _{n_1}^{\oplus p_1}  \oplus \cdots \oplus \J _{n_k}^{\oplus p_k},
$$
$t-1$ is even and $p_i$ is even for all $i$ such that $n_i$ is odd. Applying Theorem \ref{prop:symplex-nonzero}, $D$ is conjugate to a matrix in $ \spg(2(n-1), \RR)$. By Corollary \ref{symp-v columna}, $\ggo_M$ admits a symplectic structure.  

Next, we suppose that condition (ii) is satisfied. Let  $\ell \in \{1, \dots, k\}$ be the unique index  such that $n_\ell$ and $p_\ell$ are odd. We have that $M$ is conjugate to 
$$
C=\J _{n_\ell}\oplus \J _{n_1}^{\oplus p_1}  \oplus \cdots \oplus  \J _{n_\ell - 1}^{\oplus p_{\ell -1}}\oplus \J _{n_{\ell}}^{\oplus p_\ell -1}\oplus  \J_{n_{\ell+1}}^{\oplus p_{\ell +1}}\oplus\cdots \oplus \J _{n_k}^{\oplus p_k}\oplus 0_{t}.  
$$
So, $C=\begin{pmatrix}
0   & 0 \\
\epsilon_1 & D
\end{pmatrix}$ where 
$$
D= \J _{{n_\ell}-1}\oplus \J _{n_1}^{\oplus p_1}  \oplus \cdots  \oplus  \J _{n_{\ell-1}}^{\oplus p_{\ell -1}}\oplus \J _{n_{\ell}}^{\oplus p_\ell -1}\oplus  \J_{n_{\ell+1}}^{\oplus p_{\ell +1}}\oplus\cdots \oplus \J _{n_k}^{\oplus p_k}\oplus 0_{t}.  
$$
Since $n_\ell -1$, $t$  and $p_\ell -1$ are even, by Theorem \ref{prop:symplex-nonzero} we obtain that $D$ is conjugate to a matrix in $\spg(2(n-1), \RR)$,  and again, by Remark \ref{remark:symp}, $\ggo_M$ admits a symplectic structure.  \end{proof}

\ 

For each $n\in \NN$, let 
\begin{equation*}
\begin{split}  
    {\bigwedge}^s_{2n-1}:&=\{ (n_1,\ldots , n_k; p_1, \ldots , p_k; t ) \in {\bigwedge}_{2n-1}:  \text{satisfying (i) or (ii)} \\ & \qquad \text{ of Theorem \ref{classifsymp}} \}\cup\{(2n-1)\}.
\end{split} 
\end{equation*}

Consider the bijection  $\tilde{\Lambda}$ 
defined in \eqref{eq:bijection-tuples} for $b=0$ and $m=2n-1$. As a consequence of Lemma \ref{lem:isom-nilpot} and Theorem \ref{classifsymp},  we obtain the following classification result.
\begin{corollary}
\label{cor:classif-symp} Let $n\in \NN$. The isomorphism classes of $2n$-dimensional   nilpotent almost abelian Lie algebras admitting a symplectic  structure are parametrized by the set $\bigwedge^s_{2n-1}$. \end{corollary}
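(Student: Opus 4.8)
The plan is to combine the parametrization supplied by Lemma~\ref{lem:isom-nilpot} with the characterization of Theorem~\ref{classifsymp}; no genuinely new computation is needed, since this is a corollary of those two results. By Lemma~\ref{lem:isom-nilpot}, the map $\tilde{\Lambda}$ of \eqref{eq:bijection-tuples}, taken with $b=0$ and $m=2n-1$, induces a bijection between the isomorphism classes of $2n$-dimensional nilpotent almost abelian Lie algebras and the set $\bigwedge_{2n-1}$, sending the class of $\ggo_M$ to $\Lambda(M)$. It therefore suffices to identify which tuples correspond to classes admitting a symplectic structure and to check that this subset is exactly $\bigwedge^s_{2n-1}$.

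First I would treat the case $M\neq 0$. Here Theorem~\ref{classifsymp} asserts that $\ggo_M$ admits a symplectic structure if and only if $\Lambda(M)$ satisfies condition $\ri$ or $\rii$, which is precisely the defining condition for the tuples belonging to $\bigwedge^s_{2n-1}\setminus\{(2n-1)\}$. Thus, for nonzero $M$, the class $[\ggo_M]$ lies in the symplectic family exactly when $\tilde{\Lambda}([\ggo_M])\in\bigwedge^s_{2n-1}$.

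Then I would account for the abelian Lie algebra, which corresponds to $M=0$ and whose tuple is $\Lambda(0)=(2n-1)$. This case is explicitly excluded from Theorem~\ref{classifsymp} but is recorded separately in the definition of $\bigwedge^s_{2n-1}$ through the adjoined element $(2n-1)$; since the abelian Lie algebra always admits a symplectic structure, its class belongs to the symplectic family and maps to $(2n-1)$. Combining the two cases, $[\ggo_M]$ admits a symplectic structure if and only if $\Lambda(M)\in\bigwedge^s_{2n-1}$, so $\tilde{\Lambda}$ restricts to a bijection from the isomorphism classes admitting a symplectic structure onto $\bigwedge^s_{2n-1}$, which proves the corollary. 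The only delicate point is the bookkeeping around the boundary tuple $(2n-1)$: one must remember that Theorem~\ref{classifsymp} sets $M=0$ aside, whereas the target set $\bigwedge^s_{2n-1}$ deliberately includes it, so the abelian case has to be reinstated by hand.
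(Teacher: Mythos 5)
Your proof is correct and follows exactly the route the paper intends: the paper states this corollary as an immediate consequence of Lemma~\ref{lem:isom-nilpot} (the bijection $\tilde{\Lambda}$ with $b=0$, $m=2n-1$) and Theorem~\ref{classifsymp}, which is precisely your argument. Your careful handling of the abelian case $M=0$, whose tuple $(2n-1)$ is adjoined to $\bigwedge^s_{2n-1}$ by definition, is exactly the bookkeeping the paper leaves implicit.
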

The next corollary follows from the fact  that $\bigwedge^s_7=\bigwedge_7$. This is in contrast with the complex case (see Example \ref{ex:complex-8}).
\begin{corollary}\label{sym8}
Any 8-dimensional nilpotent almost abelian Lie algebra admits a symplectic structure.
\end{corollary}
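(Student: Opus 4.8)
The plan is to reduce the statement to the set-theoretic identity $\bigwedge^s_7=\bigwedge_7$ and then prove that identity by a short parity-and-size argument. By Lemma \ref{lem:isom-nilpot}, the isomorphism classes of $8$-dimensional nilpotent almost abelian Lie algebras are parametrized by $\bigwedge_7$, while by Corollary \ref{cor:classif-symp} those admitting a symplectic structure are parametrized by $\bigwedge^s_7$. Hence it suffices to show that every tuple of $\bigwedge_7$ satisfies condition $\ri$ or $\rii$ of Theorem \ref{classifsymp}. The abelian class, corresponding to the tuple $(7)$, lies in both sets by definition, so I only need to treat the tuples $(n_1,\ldots,n_k;p_1,\ldots,p_k;t)$ with $k\geq 1$.

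First I would isolate the quantity governing the two conditions, namely $B:=\#\{i : n_i \text{ and } p_i \text{ are both odd}\}$. Condition $\ri$ holds precisely when $t$ is odd and $B=0$, while condition $\rii$ holds precisely when $t$ is even and $B=1$. A tuple therefore fails both conditions exactly when $B\geq 2$: reducing $\sum_i n_ip_i+t=7$ modulo $2$ gives $B+t\equiv 1\pmod 2$, since every even block contributes an even summand and an odd block $n_i$ contributes $p_i$, so that $\sum_i n_ip_i\equiv B$; thus $t$ and $B$ have opposite parities. When $t$ is odd, $B$ is even and $\ri$ fails iff $B\geq 2$; when $t$ is even, $B$ is odd and $\rii$ fails iff $B\geq 3$. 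In either case failure forces $B\geq 2$, and conversely $B\geq 2$ rules out both conditions.

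The crux, and the only real point, is that $B\geq 2$ is impossible in $\bigwedge_7$. Since the $n_i$ are strictly decreasing with $n_i\geq 2$, two distinct indices contributing to $B$ would require two distinct odd block sizes $\geq 3$, the smallest being $3$ and $5$, each with multiplicity at least $1$. This forces $\sum_i n_ip_i \geq 3+5 = 8 > 7$, contradicting $\sum_i n_ip_i + t = 7$ with $t\geq 0$. Hence $B\leq 1$ for every tuple of $\bigwedge_7$, no tuple fails both conditions, and $\bigwedge^s_7=\bigwedge_7$, which yields the corollary. I would remark that this is precisely the arithmetic that breaks down in higher dimensions, where $3+5\leq 2n-1$ becomes possible and $\bigwedge^s_{2n-1}$ is a proper subset of $\bigwedge_{2n-1}$; a direct enumeration of the $15$ tuples of $\bigwedge_7$ (in the spirit of Example \ref{ex:complex-8}) gives an alternative, if less illuminating, verification.
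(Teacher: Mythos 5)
Your proposal is correct and follows essentially the same route as the paper: the paper's entire proof is the observation that ${\bigwedge}^s_7={\bigwedge}_7$, combined with the parametrization of Corollary \ref{cor:classif-symp}, which is exactly the reduction you make. The only difference is that the paper asserts this set equality without justification (implicitly a direct check of the tuples), whereas you prove it via the parity constraint $B+t\equiv 1\pmod 2$ and the observation that two odd block sizes would force $\sum_i n_ip_i\geq 3+5=8>7$, so your write-up is, if anything, more complete and also explains why the phenomenon fails from dimension $10$ onward.
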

As a consequence of Theorem \ref{classifcomplex} and Theorem \ref{classifsymp} we obtain the next result, whose proof follows from the straightforward fact that $\bigwedge^c_{2n-1} \subset \bigwedge^s_{2n-1}$.

\begin{theorem}\label{complex-sympl}
  Let $\ggo_A=\RR e_0  \ltimes_A \RR^{2n-1}$ be a nilpotent almost abelian Lie algebra. If $\ggo_A$ admits a complex structure, then $\ggo_A$ admits a symplectic structure.   
\end{theorem}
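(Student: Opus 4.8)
The plan is to reduce the statement entirely to the combinatorial inclusion ${\bigwedge}^c_{2n-1}\subseteq {\bigwedge}^s_{2n-1}$ and then to verify this inclusion by inspecting the parity conditions defining the two index sets. Concretely, if $\ggo_A$ is nilpotent and admits a complex structure, then either $A=0$, in which case $\ggo_A$ is abelian and trivially admits a symplectic structure, or by Theorem \ref{classifcomplex} the tuple $\Lambda(A)=(n_1,\ldots,n_k;p_1,\ldots,p_k;t)$ satisfies one of the conditions $\ri$, $\rii$, $\riii$ there. I would then show that each of these conditions implies one of the conditions $\ri$, $\rii$ of Theorem \ref{classifsymp}, so that $\ggo_A$ admits a symplectic structure. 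Thus, once the two classification theorems are in hand, the whole argument is a short parity bookkeeping.

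For the first two complex cases the verification is immediate. If $\Lambda(A)$ satisfies $\ri$ of Theorem \ref{classifcomplex}, then $t$ is odd and every $p_i$ is even; in particular $p_i$ is even for every $i$ with $n_i$ odd, which is exactly condition $\ri$ of Theorem \ref{classifsymp}. If $\Lambda(A)$ satisfies $\rii$ of Theorem \ref{classifcomplex}, then $t$ is odd, $p_1,\ldots,p_{k-1}$ are even, $n_k=2$ and $p_k$ is odd; here the only index at which $p_i$ is odd is $i=k$, but $n_k=2$ is even, so again $p_i$ is even for every odd $n_i$, giving condition $\ri$ of Theorem \ref{classifsymp}.

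The case carrying the real content is $\riii$. Suppose $\Lambda(A)$ satisfies $\riii$: here $t$ is even, and there is an index $\ell$ with $n_{\ell-1}=n_{\ell}+1$, with $p_{\ell-1}$ and $p_{\ell}$ odd and all other $p_i$ even. I would observe that, since $n_{\ell-1}$ and $n_{\ell}$ are consecutive integers, exactly one of them is odd. Consequently, among the only two indices at which $p_i$ is odd, namely $\ell-1$ and $\ell$, precisely one also has $n_i$ odd; every other index has $p_i$ even and so cannot have both $n_i$ and $p_i$ odd. Hence there is a \emph{unique} index at which both $n_i$ and $p_i$ are odd, which together with $t$ even is exactly condition $\rii$ of Theorem \ref{classifsymp}.

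These three verifications establish ${\bigwedge}^c_{2n-1}\subseteq {\bigwedge}^s_{2n-1}$, and the theorem then follows by applying the classification (equivalently, Corollary \ref{cor:classif-complex} together with Corollary \ref{cor:classif-symp}). The only point that requires genuine care is the observation in case $\riii$ that two consecutive Jordan block sizes have opposite parities; this is precisely what converts the ``two odd $p_i$'s'' appearing in the complex condition into the ``unique odd pair $(n_j,p_j)$'' demanded by the symplectic condition, and I expect it to be the crux of the argument.
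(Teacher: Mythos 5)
Your proposal is correct and follows exactly the paper's route: the paper proves Theorem \ref{complex-sympl} by noting that it ``follows from the straightforward fact that $\bigwedge^c_{2n-1}\subset\bigwedge^s_{2n-1}$,'' and your case-by-case parity check (including the key observation that $n_{\ell-1}=n_\ell+1$ forces opposite parities in case $\riii$) is precisely the verification of that inclusion. Nothing is missing; you have simply written out the details the paper leaves implicit.
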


In Example \ref{ex:5-step-no-symp} we 
showed by direct computation that the nilpotent almost abelian Lie algebra corresponding to the tuple $(5,3;1,1;1)$  admits no symplectic structure. It is easy to check that this is   the only tuple in ${\bigwedge}_{9} - {\bigwedge}_9^s$. The isomorphism classes  of 10-dimensional nilpotent almost abelian Lie algebras admitting neither complex nor symplectic structure are parametrized by the following intersection:
\[ \left({\bigwedge}_{9} - {\bigwedge}_9^c\right)\bigcap  \left({\bigwedge}_{9} - {\bigwedge}_9^s\right)= {\bigwedge}_{9} - {\bigwedge}_9^s=\{(5,3;1,1;1)\}. \]
This implies the following corollary.
\begin{corollary}\label{sym10}
  Up to isomorphism, 
the only $10$-dimensional nilpotent almost abelian Lie algebra  admitting neither complex  nor symplectic structure is $\ggo_M$ with $\Lambda(M)=(5,3;1,1;1)$.  
\end{corollary}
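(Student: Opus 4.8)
The plan is to reduce the corollary to a combinatorial statement about the parametrizing sets of tuples and then to settle that statement with a short parity argument, avoiding a case-by-case inspection of all partitions of $9$.

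First I would assemble the parametrization already established. By Lemma \ref{lem:isom-nilpot} the isomorphism classes of $10$-dimensional nilpotent almost abelian Lie algebras are in bijection with ${\bigwedge}_9$, while Corollary \ref{cor:classif-complex} and Corollary \ref{cor:classif-symp} say that $\ggo_M$ admits a complex (respectively symplectic) structure if and only if $\Lambda(M)\in{\bigwedge}^c_9$ (respectively $\Lambda(M)\in{\bigwedge}^s_9$). Hence $\ggo_M$ admits neither structure exactly when $\Lambda(M)\in\left({\bigwedge}_9-{\bigwedge}^c_9\right)\cap\left({\bigwedge}_9-{\bigwedge}^s_9\right)$. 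Because Theorem \ref{complex-sympl} amounts to the inclusion ${\bigwedge}^c_9\subseteq{\bigwedge}^s_9$, this intersection collapses to ${\bigwedge}_9-{\bigwedge}^s_9$, and it remains only to prove that ${\bigwedge}_9-{\bigwedge}^s_9=\{(5,3;1,1;1)\}$.

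The key step is to compute ${\bigwedge}_9-{\bigwedge}^s_9$ directly from Theorem \ref{classifsymp}. Fix a tuple $(n_1,\ldots,n_k;p_1,\ldots,p_k;t)$ and let $r$ be the number of indices $i$ with $n_i$ and $p_i$ both odd; the special tuple $(9)$ lies in ${\bigwedge}^s_9$ by definition and may be set aside. Since $n_ip_i$ is odd precisely when both $n_i$ and $p_i$ are odd, the identity $\sum_i n_ip_i+t=9$ gives $r\equiv 9-t\pmod 2$. If $t$ is even, then $r$ is odd, so $r\geq 1$; as the $n_i$ are distinct and $\geq 2$, three both-odd indices would force $\sum_i n_ip_i\geq 3+5+7=15>9$, whence $r=1$, which is exactly condition $\rii$. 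So no tuple with $t$ even can fail. If $t$ is odd, condition $\rii$ is vacuous and condition $\ri$ holds if and only if $r=0$; a failing tuple therefore has $r$ even and positive, i.e. $r\geq 2$, requiring two distinct odd block sizes of odd multiplicity contributing at least $3+5=8$. Combined with $t\geq 1$ odd and $\sum_i n_ip_i+t=9$, this forces $t=1$ and the partition $5+3$, namely the tuple $(5,3;1,1;1)$.

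This yields ${\bigwedge}_9-{\bigwedge}^s_9=\{(5,3;1,1;1)\}$, so up to isomorphism there is a single $10$-dimensional nilpotent almost abelian Lie algebra admitting neither structure, the one with $\Lambda(M)=(5,3;1,1;1)$; Example \ref{ex:5-step-no-symp} already exhibits it and verifies the absence of a symplectic structure, hence also of a complex one. I anticipate no real obstacle: the only delicate point is the parity bookkeeping, which must use the distinctness of the $n_i$ to bound $r$ and thereby replace the brute-force check of all thirty partitions of $9$.
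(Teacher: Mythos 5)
Your proposal is correct, and its overall structure is the same as the paper's: both reduce the problem, via Theorem \ref{complex-sympl} (i.e.\ the inclusion ${\bigwedge}^c_9\subseteq{\bigwedge}^s_9$), to showing ${\bigwedge}_9-{\bigwedge}^s_9=\{(5,3;1,1;1)\}$, and both rely on Example \ref{ex:5-step-no-symp} for the exhibited algebra. The one genuine difference is how that set is computed: the paper simply asserts ``it is easy to check'' that $(5,3;1,1;1)$ is the only tuple of ${\bigwedge}_9$ violating both conditions of Theorem \ref{classifsymp}, which implicitly means inspecting the partitions of $9$, whereas you replace this by a parity argument --- setting $r$ equal to the number of indices with $n_i$ and $p_i$ both odd, using $r\equiv 9-t\pmod 2$ together with the distinctness bound $3+5+7>9$ to kill the $t$-even case and to pin down $(5,3;1,1;1)$ in the $t$-odd case. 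Your argument is sound (I checked both cases, including that $(5,3;1,1;1)$ indeed fails conditions \ri\ and \rii, and that the tuple $(9)$ is excluded by definition), and it has the merit of scaling to higher dimensions in the spirit of the paper's Corollaries \ref{even-step} and \ref{odd-step}, while the paper's enumeration is shorter to state but gives no mechanism that generalizes.
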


\begin{remark} We point out that if $\ggo_M$ is a nilpotent almost abelian Lie algebra admitting no symplectic structure, then $\dim \ggo _M \geq 10$. In fact, it follows from Theorem \ref{thm:k-step-symp} that if $\ggo _M$ is $k$-step nilpotent with $k\leq 4$, then $\ggo _M$ admits a symplectic structure. Therefore, if $\ggo _M$ admits no symplectic structure we must have $k\geq 5$. 
It was shown in \cite{GK} that the Lie algebra $\ggo_M$ with $\Lambda(M)=(5;1;0)$ admits a symplectic structure. 
This forces the size of the matrix $M$ to be $m\times m$ with $m\geq 7$, that is, $\dim \ggo _M \geq 8$. Since all $8$-dimensional nilpotent almost abelian Lie algebras admit a symplectic structure (see Corollary \ref{sym8}), it follows that $\dim \ggo _M \geq 10$.
\end{remark}

\begin{example} Corollary \ref{sym10} can be generalized by  observing that, for $m_1$ and $m_2$ odd, with $m_1>m_2$, the tuple $(m_1,m_2;1,1;1)\in \bigwedge _{m\, }- \bigwedge _{m\, }^s$, for $m=m_1+m_2+1$. Therefore, the corresponding 
nilpotent almost abelian Lie algebra admits no symplectic structure. Moreover, by Theorem \ref{complex-sympl}, this family of Lie algebras admits neither complex nor symplectic structure (see Corollary \ref{cor:final} for other examples). \end{example}
Theorem \ref{complex-sympl} does not hold for arbitrary nilpotent Lie algebras, as the next example shows.

\begin{example}\label{ex_symp-no-cx}
    Let $\ggo=\RR \oplus \hg_5$, where $\hg_5$ is the $5$-dimensional Heisenberg Lie algebra. Then $\ggo$ admits a complex structure but no symplectic structure (see \cite{Sal} and \cite{GK}, respectively).  
\end{example}
\subsection{Applications of Theorem \ref{classifsymp}}

In Theorem \ref{thm:k-step-symp} we have seen that every $k$-step nilpotent almost abelian  Lie algebra, with  $k=2,3$ or $4$, admits a symplectic structure. However, we have seen that this fails for 5-step nilpotent almost abelian Lie algebras if we consider Lie algebras of dimension 10. We study next the case of $6$-step nilpotent almost abelian Lie algebras.

\begin{corollary}\label{6-step nilp}
Let $\ggo_A$ be a 6-step nilpotent almost abelian Lie algebra of dimension $2n$. If $2n\leq 14$, then $\ggo_A$ admits a symplectic structure. 
\end{corollary}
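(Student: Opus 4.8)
The plan is to read off the hypotheses as a condition on the tuple $\Lambda(A)$, recast the criterion of Theorem~\ref{classifsymp} through a parity identity forced by the odd dimension of the abelian ideal, and then observe that $6$-step nilpotency leaves no room to violate that criterion when $2n\le 14$.

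First I would record what the hypotheses mean for $\Lambda(A)$. Since $\ggo_A$ is nilpotent, so is $A$ (Remark~\ref{rem:nilp-similiar}), and the step of nilpotency of $\ggo_A$ equals the size of the largest Jordan block of $A$; hence $\ggo_A$ being $6$-step nilpotent is exactly the condition $n_1=6$ in $\Lambda(A)=(n_1,\ldots,n_k;p_1,\ldots,p_k;t)$, with $6=n_1>n_2>\cdots>n_k\ge 2$. In particular every remaining block size satisfies $2\le n_i\le 5$, so the only odd block sizes available are $3$ and $5$, the largest size $n_1=6$ being even.

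The heart of the argument, and the step I would isolate as a lemma, is a clean reformulation of Theorem~\ref{classifsymp}. Let $N_{\mathrm{odd}}$ denote the number of indices $i$ for which $n_i$ and $p_i$ are both odd; then conditions $\ri$ and $\rii$ say precisely that $\ggo_M$ admits a symplectic structure if and only if either $t$ is odd and $N_{\mathrm{odd}}=0$, or $t$ is even and $N_{\mathrm{odd}}=1$. Reducing $\sum_i n_ip_i+t=2n-1$ modulo $2$ and noting that $n_ip_i$ is odd exactly when $i$ is counted by $N_{\mathrm{odd}}$, I get $N_{\mathrm{odd}}+t\equiv 1\pmod 2$. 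Feeding this constraint back in, the value $N_{\mathrm{odd}}=0$ forces $t$ odd and the value $N_{\mathrm{odd}}=1$ forces $t$ even, so in both cases one of $\ri,\rii$ holds, whereas $N_{\mathrm{odd}}\ge 2$ satisfies neither. Thus $\ggo_M$ admits a symplectic structure if and only if $N_{\mathrm{odd}}\le 1$, i.e. at most one odd block size occurs with odd multiplicity; this matches Example~\ref{ex:5-step-no-symp}, where $\Lambda(M)=(5,3;1,1;1)$ gives $N_{\mathrm{odd}}=2$.

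It then remains to apply this criterion. Having $N_{\mathrm{odd}}\ge 2$ requires two distinct odd block sizes with odd, hence positive, multiplicity, and the only candidates are $3$ and $5$, so both $\J_3$ and $\J_5$ must appear. Together with the block $\J_6$ present by $6$-step nilpotency, the abelian ideal would then have dimension at least $6+5+3=14$, forcing $2n-1\ge 14$ and so $2n\ge 16$, contrary to $2n\le 14$. Hence $N_{\mathrm{odd}}\le 1$ and $\ggo_A$ admits a symplectic structure. I do not anticipate a real obstacle: the only delicate point is the bookkeeping in the reformulation, where one must use that $2n-1$ is odd, and this is exactly what makes $14$ sharp, since in dimension $16$ the tuple $(6,5,3;1,1,1;1)$ has $N_{\mathrm{odd}}=2$ and gives a $6$-step example admitting no symplectic structure.
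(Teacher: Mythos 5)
Your proof is correct and is essentially the paper's argument: both rest on Theorem~\ref{classifsymp}, the parity of $2n-1=\sum_i n_ip_i+t$, and the count showing that forcing $\J_5$ and $\J_3$ to appear alongside $\J_6$ would require $2n-1\geq 6+5+3=14$, impossible when $2n\leq 14$. The only difference is packaging: the paper runs the parity computation inline as a case analysis on whether $t$ is even or odd, while you first distill it into the standalone criterion that $\ggo_M$ admits a symplectic structure if and only if at most one block size occurs with both size and multiplicity odd --- a reformulation the paper never states explicitly but which is equivalent to its computation, and which also makes the sharpness example $(6,5,3;1,1,1;1)$ in dimension $16$ and the companion Corollaries~\ref{even-step} and~\ref{odd-step} transparent.
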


\begin{proof}
If $\ggo_A$ is a 6-step nilpotent almost abelian Lie algebra of dimension $2n$, then $A$ is conjugate to  its Jordan normal matrix $M$ given by
    \[ M= \J _{6}^{\oplus p_6}  \oplus \J _{5}^{\oplus p_5}  \oplus \J _{4}^{\oplus p_4}  \oplus\J _{3}^{\oplus p_3}  \oplus \J _{2}^{\oplus p_2}  \oplus  0_{t},  \quad  p_6\geq 1, \;\;  t\geq 0,\quad \sum_{i=2}^6 ip_i + t = 2n-1 , \]
where $p_i\geq 0$ for $i=2, \ldots ,5$. The meaning of $p_i=0$ is that the Jordan block $\J_i$ does not appear in $M$. 
Since $\sum_{i=2}^6 ip_i + t$ is odd, it follows that $5p_5+3p_3 +t$ is odd. If $t$ is even, then $p_5+p_3$ is odd, so only one of $p_5$ and $p_3$ is odd, then condition $\rii$ of Theorem \ref{classifsymp} is satisfied and $\ggo_M$ admits a symplectic structure. So, by Remark \ref{rem:nilp-similiar}, $\ggo_A$ also admits a symplectic structure.   If $t$ is odd ($t\geq1$), then $p_5+p_3$ is even, so $p_5$ and $p_3$ are both odd or both even. Suppose $p_5$ and $p_3$ are both odd ($p_5\geq 1$ and $p_3\geq 1$), we have 
\[ 15 =6+5+3+1 \leq \sum_{i=1}^6 ip_i + t = 2n-1\leq 14-1=13, \]
which is a contradiction. Therefore $p_5$ and $p_3$ are both even, condition (i) of Theorem \ref{classifsymp} is satisfied and $\ggo_M$ admits a symplectic structure. Using again Remark \ref{rem:nilp-similiar}, we obtain that $\ggo_A$ admits a symplectic structure.  
\end{proof}

The above result can be generalized as follows.

\begin{corollary}\label{even-step}
  Let $\ggo_A$ be a k-step nilpotent almost abelian Lie algebra where  $k$ is even and  $k\geq 6$. If $\dim  \ggo_A=2n\leq k+8$, then $\ggo_A$ admits a symplectic structure.  
\end{corollary}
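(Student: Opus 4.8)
The plan is to follow the strategy of Corollary~\ref{6-step nilp}, reducing everything to the Jordan normal form of $A$ and then checking the parity conditions of Theorem~\ref{classifsymp}. First I would pass from $\ggo_A$ to $\ggo_M$, where $M$ is the Jordan normal form of $A$; by Remark~\ref{rem:nilp-similiar} these Lie algebras are isomorphic, so it suffices to produce a symplectic structure on $\ggo_M$. Since $\ggo_A$ is $k$-step nilpotent with $k$ even, the largest Jordan block of $M$ has even size $k$, and
\[ M= \J_{k}^{\oplus p_k}\oplus \J_{k-1}^{\oplus p_{k-1}}\oplus \cdots \oplus \J_{2}^{\oplus p_2}\oplus 0_t, \qquad p_k\geq 1,\ \ t\geq 0,\ \ \textstyle\sum_{i=2}^k ip_i+t=2n-1. \]
The dimension hypothesis $2n\leq k+8$ together with $p_k\geq 1$ yields the crucial budget inequality $\sum_{i=2}^{k-1} ip_i + t = (2n-1)-kp_k \leq (k+7)-k = 7$, which will do all the work.

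Next I would record a parity observation. Set $r$ to be the number of odd sizes $i\in\{3,5,\dots,k-1\}$ with $p_i$ odd. Since every even block contributes an even amount to $\sum_{i=2}^k ip_i$, reducing $2n-1=\sum_{i=2}^k ip_i+t$ modulo $2$ gives $r+t\equiv 1 \pmod 2$. I then split into two cases according to the parity of $t$, exactly as in Corollary~\ref{6-step nilp}. If $t$ is even, then $r$ is odd, and I would show $r=1$, so that $M$ meets condition~$\rii$ of Theorem~\ref{classifsymp}; if $t$ is odd, then $r$ is even (and $t\geq 1$), and I would show $r=0$, so that $M$ meets condition~$\ri$. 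Here I use that $r=1$ is precisely the assertion that a unique present block has odd size and odd multiplicity, while $r=0$ is precisely the assertion that every odd size present has even multiplicity.

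The main obstacle — and the only place the dimension bound enters — is ruling out $r\geq 3$ in the even-$t$ case and $r\geq 2$ in the odd-$t$ case; I expect this step to be decisive but short. If $r\geq 3$, there are three distinct odd sizes $i_1<i_2<i_3$ in $\{3,\dots,k-1\}$ each with $p_{i_j}\geq 1$, whence $\sum_{i=2}^{k-1} ip_i \geq i_1+i_2+i_3\geq 3+5+7=15$, contradicting $\sum_{i=2}^{k-1} ip_i+t\leq 7$; thus in the even-$t$ case $r=1$. Similarly, if $t$ is odd then $t\geq 1$ forces $\sum_{i=2}^{k-1} ip_i\leq 6$, so $r\geq 2$ would give $\sum_{i=2}^{k-1} ip_i\geq 3+5=8>6$, again impossible; hence $r=0$. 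In either case the relevant condition of Theorem~\ref{classifsymp} holds, so $\ggo_M$ — and therefore $\ggo_A$ — admits a symplectic structure. The delicate point to get right is simply that the largest block has even size $k$ and so never interferes with the odd-size count, which is exactly what makes the even-$k$ hypothesis essential.
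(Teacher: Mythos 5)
Your proposal is correct and follows essentially the same route as the paper's proof: reduce to the Jordan normal form via Remark~\ref{rem:nilp-similiar}, use the dimension hypothesis to obtain the budget bound $\sum_{i=2}^{k-1} ip_i + t \leq 7$, and then show by a sum-of-odd-sizes count that at most one (resp.\ no) odd-size block can have odd multiplicity when $t$ is even (resp.\ odd), so that condition $\rii$ (resp.\ $\ri$) of Theorem~\ref{classifsymp} applies. The only cosmetic difference is that you rule out $r\geq 3$ in the even-$t$ case using the parity of $r$, whereas the paper rules out two odd-multiplicity odd sizes directly in both cases; the underlying counting argument is identical.
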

\begin{proof}
If $\ggo_A$ is a $k$-step nilpotent almost abelian Lie algebra of dimension $2n$, then $A$ is conjugate to its Jordan normal form $M$ given by

    \[ M= \J _{k}^{\oplus p_k}  \oplus\cdots \oplus\J _{4}^{\oplus p_4}  \oplus\J _{3}^{\oplus p_3}  \oplus \J _{2}^{\oplus p_2}  \oplus  0_{t},  \quad  p_k\geq 1, \;\;  t\geq 0,\quad \sum_{i=2}^k ip_i + t = 2n-1,\]
where $p_i\geq 0$ for $i=2, \ldots ,k-1$. The meaning of $p_i=0$ is that the Jordan block $\J_i$ does not appear in $M$. 
Since $p_k\geq 1$ we have 
\[ k + k(p_k-1) + \sum_{i=2}^{k-1} ip_i + t=\sum_{i=2}^k ip_i + t =2n-1 \leq k+7 .\]
Then $ k(p_k-1) + \sum_{i=2}^{k-1} ip_i + t $ is odd and less than or equal to $7$.

By hypothesis $k=2l$ with $l>2$.

If $t$ is odd, $ k(p_k-1) + \sum_{i=2}^{k-1} ip_i$ is even. In particular, $\displaystyle\sum_{i=2}^{l} (2i-1)p_{2i-1}$ is even. Hence, either there is  an even number of indices $i\in \{ 2, \ldots, l\}$ such that $p_{2i-1}$ is odd,  or $p_{2i-1}$ is even for all $i=2, \ldots, l$. Suppose that the first condition holds. So, there exist at least two indices, say $i_1 > i_2$, such that $p_{2i_1 -1},\, p_{2i_2-1}$ are odd. Since $2i_1-1 \geq 5$ and $2i_2-1 \geq 3$,  we obtain
\begin{equation}\label{i1i2}
8 \leq (2i_1-1)+(2i_2-1)\leq k(p_k-1) + \sum_{i=2}^{k-1} ip_i\leq 7,    
\end{equation}
which is a contradiction. Then, condition (i) of Theorem \ref{classifsymp} is satisfied and $\ggo_M$ admits a symplectic structure.

If $t$ is even, $ k(p_k-1) + \sum_{i=2}^{k-1} ip_i$ is odd. In particular,  $\displaystyle\sum_{i=2}^{l} (2i-1)p_{2i-1}$ is odd. Hence, there exists an odd number of indices  $i\in \{ 2, \ldots, l\}$ such that $p_{2i-1}$ is odd. But, if there exist at least two different numbers $i_1, i_2$ satisfying this condition, the contradiction given in \eqref{i1i2} is obtained again. Therefore, condition $\rii$ of Theorem \ref{classifsymp} is satisfied and $\ggo_M$ admits a symplectic structure.  Using again Remark \ref{rem:nilp-similiar}, we obtain that $\ggo_A$ admits a symplectic structure.  
\end{proof}

The upper bound $k+8$ in Corollary \ref{even-step} is optimal, since if we consider  even  $k\geq6$ and $M=\J_k\oplus \J_5\oplus \J_3\oplus 0_{1}$, then $\ggo_M$ is a $k$-step nilpotent almost abelian Lie algebra of dimension $k+10$, but by Theorem \ref{classifsymp} $\ggo_M$ does not admit a symplectic structure.

\begin{corollary}\label{odd-step}
 Let $\ggo_A$ be a k-step nilpotent almost abelian Lie algebra  where  $k$ is odd and $k\geq5$. If $\dim \ggo_A = 2n\leq k+3$, then $\ggo_A$ admits a symplectic structure.  
\end{corollary}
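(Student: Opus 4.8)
The plan is to follow the strategy of Corollary~\ref{even-step}: reduce to the Jordan normal form and apply Theorem~\ref{classifsymp}, but now exploit the much tighter dimension bound. By Remark~\ref{rem:nilp-similiar} it suffices to treat $A=M$, where
\[
M=\J_k^{\oplus p_k}\oplus\cdots\oplus\J_2^{\oplus p_2}\oplus 0_t,\qquad p_k\geq 1,\ t\geq 0,\ \sum_{i=2}^k i p_i+t=2n-1,
\]
with $p_i\geq 0$ for $2\leq i\leq k-1$ and the convention that $p_i=0$ means $\J_i$ is absent. Writing $k p_k=k+k(p_k-1)$ and using $2n-1\leq k+2$, the first step is to derive
\[
k(p_k-1)+\sum_{i=2}^{k-1} i p_i+t\leq 2 .
\]

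Since every summand is nonnegative and $k\geq 5$, this inequality is extremely restrictive, and I would extract the full Jordan type from it. First, $k(p_k-1)\leq 2<k$ forces $p_k=1$. Next, because $k$ is odd, $k-1$ is even, so every odd $i$ with $p_i$ possibly nonzero and $i<k$ satisfies $3\leq i\leq k-2$; for such $i$ the term $i p_i\geq 3$ as soon as $p_i\geq 1$, so the bound forces $p_i=0$. Hence the only odd-size Jordan block occurring in $M$ is $\J_k$, and it occurs with the odd multiplicity $p_k=1$. (The only even-size blocks that can survive are at most a single $\J_2$, and these play no role below.)

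The key remaining point is the parity of $t$, since Theorem~\ref{classifsymp} invokes its condition $\ri$ when $t$ is odd and its condition $\rii$ when $t$ is even. Here I would argue as follows: $i p_i$ is even for every even $i$, while among odd $i$ only $i=k$ contributes, so $\sum_{i=2}^k i p_i\equiv p_k\equiv 1\pmod 2$. As $2n-1$ is odd, this forces $t$ to be even. Consequently $M$ has $t$ even together with a \emph{unique} odd-size block of odd multiplicity, namely $\J_k$ (size $k$ odd, multiplicity $1$ odd), so condition $\rii$ of Theorem~\ref{classifsymp} is satisfied and $\ggo_M$ admits a symplectic structure; by Remark~\ref{rem:nilp-similiar} so does $\ggo_A$.

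The main (and essentially only) obstacle is this parity step. The dimension bound alone pins down the Jordan type to $\J_k$ plus zeros and at most one $\J_2$, but whether Theorem~\ref{classifsymp} applies through $\ri$ or $\rii$ hinges on the parity of $t$; and were $t$ odd, the presence of the odd-size block $\J_k$ of odd multiplicity would in fact obstruct $\ri$. So verifying that the congruence $2n-1\equiv k\pmod 2$ forces $t$ even is the crux that makes the argument close.
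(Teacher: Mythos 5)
Your proof is correct and follows essentially the same route as the paper: reduce to the Jordan normal form, use the bound $2n-1\leq k+2$ together with the parity of $2n-1-k$ (both $2n-1$ and $k$ odd) to pin down the Jordan type, and conclude via Theorem \ref{classifsymp} and Remark \ref{rem:nilp-similiar}. The only difference is presentational: the paper enumerates the three possible matrices $\J_k$, $\J_k\oplus\J_2$, $\J_k\oplus 0_2$ explicitly and checks each one, whereas you verify condition $\rii$ of Theorem \ref{classifsymp} structurally (unique odd-size block $\J_k$ of odd multiplicity, $t$ even) without listing cases — the same parity observation in either packaging.
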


\begin{proof}
If $\ggo_A$ is a $k$-step nilpotent almost abelian Lie algebra of dimension $2n$, then $A$ is conjugate to its Jordan normal form $M$ given by
    \[ M= \J _{k}^{\oplus p_k}  \oplus\cdots \oplus\J _{4}^{\oplus p_4}  \oplus\J _{3}^{\oplus p_3}  \oplus \J _{2}^{\oplus p_2}  \oplus  0_{t},  \quad  p_k\geq 1, \;\;  t\geq 0,\quad \sum_{i=2}^k ip_i + t = 2n-1 ,\]
where $p_i\geq 0$ for $i=2,\ldots , k-1$. As before, the meaning of $p_i=0$ is that the Jordan block $\J_i$ does not appear in $M$. 
Since $p_k\geq 1$ we have 
\[ k + k(p_k-1) + \sum_{i=2}^{k-1} ip_i + t=\sum_{i=2}^k ip_i + t =2n-1 \leq k+2 .\]
Then $ k(p_k-1) + \sum_{i=2}^{k-1} ip_i + t $ is even and less than or equal to $2$.
Therefore, $M=\J_k$, $M=\J_k \oplus \J_2$ or $M=\J_k \oplus 0_2$, any of the three possibilities admits symplectic structure by Theorem \ref{classifsymp}.  Using again Remark \ref{rem:nilp-similiar}, we obtain that $\ggo_A$ admits a symplectic structure.  
\end{proof}

The upper bound $k+3$ in Corollary \ref{odd-step} is optimal, since if we consider an odd integer   $k\geq5$ and $M=\J_k\oplus \J_3\oplus 0_{1}$, then $\ggo_M$ is a $k$-step nilpotent almost abelian Lie algebra of dimension $k+5$, but by Theorem \ref{classifsymp} $\ggo_M$ does not admit a symplectic structure.

As a final application of Theorem \ref{classifcomplex} and Theorem \ref{classifsymp}, we obtain families of nilpotent almost abelian Lie algebras admitting both, complex and symplectic structures, or   symplectic but no complex structure, or  neither complex nor symplectic structure.
\begin{corollary}\label{cor:final} Let $\ggo_M$ be a nilpotent almost abelian Lie algebra.
\begin{enumerate}
    \item[$\ri$] If $M=\J_{m+1}\oplus \J_m$, then $\ggo_M$ admits both, complex and symplectic structure for all $m\geq 2.$
    \item[$\rii$] If $M=\J_m$ with $m$ odd, then $\ggo_M$ admits symplectic but no complex structure.
    \item[$\riii$] If $M=\J_{m_1}\oplus \J_{m_2}\oplus \J_{m_3}$ with $m_1$, $m_2$ and $m_3$ odd, $m_1>m_2>m_3$, then $\ggo_M$ admits neither complex nor symplectic structure. 
\end{enumerate}
\end{corollary}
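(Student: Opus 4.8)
The plan is to read off the tuple $\Lambda(M)$ in each of the three cases and then simply test the parity hypotheses in Theorem \ref{classifcomplex} and Theorem \ref{classifsymp}; no machinery beyond these two classifications is needed (together with Theorem \ref{complex-sympl} for one of the positive symplectic conclusions). In each case the tuple is immediate from the Jordan block sizes, the dimension of $\ggo_M$ is visibly even, and the only work is matching (or ruling out) the listed conditions. I would present the three items separately.

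For $\ri$, with $M=\J_{m+1}\oplus\J_m$ we have $\Lambda(M)=(m+1,m;1,1;0)$, so $k=2$, $t=0$ is even, and $\dim\ggo_M=2m+2$. I would apply condition $\riii$ of Theorem \ref{classifcomplex} with $\ell=2$: indeed $n_1=m+1=n_2+1$, both $p_1=p_2=1$ are odd, and there is no index outside $\{1,2\}$ left to constrain. Hence $\ggo_M$ admits a complex structure, and the existence of a symplectic structure then follows at once from Theorem \ref{complex-sympl}.

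For $\rii$, with $M=\J_m$ and $m$ odd we have $\Lambda(M)=(m;1;0)$, so $k=1$, $t=0$ is even, and $\dim\ggo_M=m+1$ is even. Condition $\riii$ of Theorem \ref{classifcomplex} requires $k\geq2$, while conditions $\ri$ and $\rii$ require $t$ odd; since none applies, $\ggo_M$ admits no complex structure. On the symplectic side, condition $\rii$ of Theorem \ref{classifsymp} is satisfied with the unique index $\ell=1$, because $n_1=m$ and $p_1=1$ are both odd, so $\ggo_M$ does admit a symplectic structure.

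For $\riii$, with $M=\J_{m_1}\oplus\J_{m_2}\oplus\J_{m_3}$ and $m_1>m_2>m_3$ all odd we have $\Lambda(M)=(m_1,m_2,m_3;1,1,1;0)$, so $k=3$, $t=0$ is even, and $\dim\ggo_M=m_1+m_2+m_3+1$ is even. For the complex case, condition $\riii$ of Theorem \ref{classifcomplex} would force $p_i$ even for the single index $i\notin\{\ell-1,\ell\}$, but all three $p_i$ equal $1$; so the condition fails for every admissible $\ell$, and there is no complex structure. For the symplectic case, condition $\rii$ of Theorem \ref{classifsymp} demands a \emph{unique} index with $n_\ell$ and $p_\ell$ odd, whereas here all three indices satisfy this, so the condition also fails and $\ggo_M$ admits no symplectic structure. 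The only mild subtlety throughout is bookkeeping the parities of $t$ and of the $p_i$ correctly against the precise clauses of the two theorems; there is no genuine obstacle, as each item reduces to a direct case check.
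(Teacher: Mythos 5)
Your proof is correct and follows exactly the route the paper intends: the corollary is stated there as a direct application of Theorems \ref{classifcomplex} and \ref{classifsymp}, and your case-by-case parity check of the tuples $(m+1,m;1,1;0)$, $(m;1;0)$, and $(m_1,m_2,m_3;1,1,1;0)$ against the clauses of those theorems is precisely that application. Invoking Theorem \ref{complex-sympl} for the symplectic conclusion in item (i) is a perfectly valid shortcut (one could equally note that exactly one of $m,m+1$ is odd and apply condition (ii) of Theorem \ref{classifsymp} directly), so there is nothing to correct.
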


\subsection{The general case}
We start by proving a result that will be needed in Theorem \ref{sympM(c)+M(-c)} and Theorem \ref{sympM(c)+M(-c)+Q}.
\begin{proposition}
\label{odd-space sum}
Let $M\in M(2n-1, \RR )$ and assume that $\ggo_M$ admits a symplectic structure, then one only one of the following conditions holds:
\begin{enumerate}
        \item[\emph{(I)}] $0 \in \emph{spec}_\RR (M)$ and $m_0$ is odd; 
        \item[\emph{(II)}] there exists a unique $c\in \emph{spec}_\RR (M)$, $c\neq 0$, such that $-c\in \emph{spec}_\RR (M)$ with $m_{-c}=m_c -1$;
         \item[\emph{(III)}] there exists a unique $c\in \emph{spec}_\RR (M)$, $c\neq 0$,  such that $m_c=1$ and $-c\notin \emph{spec}_\RR (M)$.
\end{enumerate}
\end{proposition}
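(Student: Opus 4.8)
The plan is to convert the existence of a symplectic structure into precise spectral symmetry constraints on $M$, and then read off the three cases. By Corollary \ref{symp-v columna} the hypothesis gives that $M$ is conjugate to a block lower-triangular matrix $C=\begin{pmatrix} c & 0\\ w & D\end{pmatrix}$ with $c\in\RR$, $w\in\RR^{2(n-1)}$, and $D$ conjugate to a matrix in $\spg_\Omega(2(n-1),\RR)$ for some non-singular skew-symmetric $\Omega$. First I would feed $D$ into Theorem \ref{prop:symplex-nonzero} to record two facts about its real spectrum: the algebraic multiplicity $m_0^{D}$ of the eigenvalue $0$ is even (by \eqref{symp-real-eig}, since every odd-sized Jordan block occurs with even multiplicity and the number $t$ of trivial blocks is even), and $m_\lambda^{D}=m_{-\lambda}^{D}$ for every real $\lambda\neq 0$ (by \eqref{symp-complex-eig}, as $\Lambda(D(-\lambda))=\Lambda(D(\lambda))$). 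In other words, the real spectrum of $D$ is symmetric under $\lambda\mapsto-\lambda$, with an even multiplicity at $0$.

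Next I would transfer this to $M$. Because $C$ is block triangular with a $1\times 1$ block $c$, the algebraic multiplicities satisfy $m_\lambda=m_\lambda^{D}+\delta_{\lambda,c}$ for every real $\lambda$ (with the convention that a multiplicity is $0$ when $\lambda$ is not an eigenvalue); that is, the real spectrum of $M$ is that of $D$ with a single unit ``bump'' at $c$. I would then split according to $c$. If $c=0$, then $m_0=m_0^{D}+1$ is odd while the bump preserves the symmetry elsewhere, giving condition (I). If $c\neq 0$ and $c\in\text{spec}_\RR(D)$, then symmetry of $D$ forces $-c\in\text{spec}_\RR(D)\subset\text{spec}_\RR(M)$ with $m_{-c}=m_{-c}^{D}=m_c^{D}=m_c-1$, giving condition (II). If $c\neq 0$ and $c\notin\text{spec}_\RR(D)$, then $m_c=1$, and since $\text{spec}_\RR(D)$ is symmetric we get $-c\notin\text{spec}_\RR(D)$, hence $-c\notin\text{spec}_\RR(M)$, giving condition (III). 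These three subcases are exhaustive, which proves existence.

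To finish I would establish ``one and only one'' by verifying that in each subcase the other two conditions fail, using that $m_\lambda=m_{-\lambda}$ for all $\lambda\neq\pm c$ and that $m_0$ is even precisely when $c\neq 0$. The delicate point — and what I expect to be the main obstacle — is the uniqueness clauses inside (II) and (III) together with their mutual exclusivity: I must show the single asymmetry sits at exactly one value, and in particular that in case (II) the partner $-c$ does not also qualify. This follows because the defect $m_\lambda-m_{-\lambda}$ equals $+1$ at $\lambda=c$, equals $-1$ at $\lambda=-c$, and vanishes otherwise, so $c$ is singled out by the sign; similarly, in case (III) any other candidate $c'$ would lie in $\text{spec}_\RR(D)$, forcing $-c'\in\text{spec}_\RR(M)$ and contradicting the defining property of (III). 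A minor technical care is needed when the eigenvalue $0$ of $D$ is encoded by the degenerate tuple $(m_0^{D})$, in which case the parity claim reads directly as $m_0^{D}$ even.
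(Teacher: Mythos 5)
Your proof is correct and takes essentially the same route as the paper: both reduce via Corollary \ref{symp-v columna} to the block form $\begin{pmatrix} c & 0\\ w & D\end{pmatrix}$ with $D$ conjugate into $\spg(2(n-1),\RR)$, apply conditions \eqref{symp-real-eig} and \eqref{symp-complex-eig} of Theorem \ref{prop:symplex-nonzero} to $D$, and split into the three cases according to whether $c=0$, $c\neq 0$ with $c\in\text{spec}_\RR(D)$, or $c\neq 0$ with $c\notin\text{spec}_\RR(D)$. Your explicit multiplicity bookkeeping ($m_\lambda=m_\lambda^{D}+\delta_{\lambda,c}$, with $m_0^D$ even and $m_\lambda^D=m_{-\lambda}^D$) and the resulting verification of the ``one and only one'' clause is a more careful rendering of what the paper's proof leaves implicit, but it is the same argument.
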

\begin{proof}
By Corollary \ref{symp-v columna}, $M$ is conjugate to 
$C= \begin{pmatrix}
      c   & 0 \\
      w   & D
     \end{pmatrix}$ where $c\in \RR$, $w\in \RR^{2n-2}$
     and $D\in \spg(2(n-1), \RR)$. Since $M$ and $C$ are conjugate and $c$ is an eigenvalue of $C$, then $c\in \text{spec}_\RR (M)$. 
     If $c\neq 0$ and $c\notin \text{spec}_\RR (D)$, then $m_c=1$ and, by \eqref{symp-complex-eig} of Theorem \ref{prop:symplex-nonzero}, $-c\notin \text{spec}_\RR (M)$. Now, suppose that $c \in \text{spec}_\RR (D)$. If $c=0$, by condition \eqref{symp-real-eig} of Theorem \ref{prop:symplex-nonzero} we obtain that $m_c$ is odd.  On the other hand, if $c\neq 0$ by condition \eqref{symp-complex-eig} of Theorem \ref{prop:symplex-nonzero} we have that $-c \in \text{spec}_\RR (D)$ with $\Lambda (D(c))=\Lambda (D(-c))$. Here, $D$ is conjugate to $D(c) \oplus D(-c) \oplus \tilde{D}$, where  $\pm c\notin \text{spec}_\RR (\tilde{D})$). Hence, $-c \in \text{spec}_\RR (M)$ and $m_{-c}=m_c -1$. 
\end{proof}
\begin{remark}\label{M_c}
    Observe that by the above proposition, if $\ggo_M$ admits a symplectic structure, then there exists a unique $c  \in \text{spec}_\RR (M)$ such that
    \begin{enumerate}
        \item[(I)] if $c=0$, then $m_0$ is odd and $M$ is conjugate to either $M(0)$ or $M(0)\oplus Q$ where    $0\not\in \text{spec}_\RR (Q)$, depending  on whether $m_0=2n-1$ or $m_0 < 2n-1$;  
        \item[(II)] if $c\neq 0$ and $m_c>1$, then $-c \in \text{spec}_\RR (M)$,  $m_{-c}=m_c -1 \geq 1$ and $M$ is conjugate to either $M(c)\oplus M(-c)$ or $M(c)\oplus M(-c) \oplus Q$ where $\pm c\not\in \text{spec}_\RR  (Q)$, depending on whether $m_c=n$ or $m_c<n$. In particular, $M(c)\oplus M(-c) \in M(2m_c -1, \RR)$;
        \item[(III)] if $c\neq 0$ and  $m_c=1$, then $M$ is conjugate to $M(c)\oplus Q$ where $c\not\in \text{spec}_\RR (Q)$. 
     \end{enumerate}
\end{remark}

\medskip

The cases (I), (II) and (III) of the above remark will be studied in Theorem \ref{sympM(c)+M(-c)} and Theorem \ref{sympM(c)+M(-c)+Q}. 

\begin{theorem}\label{sympM(c)+M(-c)}
    Let $M\in M(2n-1,\RR)$, $n\geq 2$. Consider the following two cases:
\begin{enumerate}
   \item[\emph{(I)}] $M=M(0)$. In this case,   
     $\ggo_M$ admits a symplectic structure if and only if $\Lambda (M) \in \bigwedge^s_{2n-1}$.

\medskip
    
    \item[\emph{(II)}] $M=M(c)\oplus M(-c)$, with $c\neq 0$ and $m_c=n$. In this case we have two possibilities:
    
 \noindent  $\diamond$ If 
    $ \Lambda (M(-c))= (n-1), $
    then $\ggo_M$ admits a symplectic structure if and only if \[ \Lambda (M(c))=\begin{cases} (n),  &\text{ if $M$ is diagonalizable}  \\
    \text{ or }\\
       (2;1;n-2) & \text{ if $M$ is not diagonalizable}
    \end{cases} \]
$\diamond$ If  $\Lambda (M(-c))=(n_1, \cdots, n_k; p_1, \cdots, p_k; t)   ,$   
    then $\ggo_M$ admits a symplectic structure if and only if one of the following conditions holds:
    \begin{enumerate}
        \item[\emph{(i)}] $\Lambda (M(c))=(n_1, \cdots, n_k; p_1, \cdots, p_k; t+1)$;
        \item[\emph{(ii)}] there exists a unique $d\in\{1, \cdots, k\}$ such that $\Lambda (M(c))$ is given by
        \[
        \begin{cases}
            (n_1, \cdots, n_{d-1}, n_d, \cdots,  n_k; p_1, \cdots,  p_{d-1}+1, p_{d}-1, \cdots,p_k; t), & \text{if } n_{d-1}=n_d +1, \\
       \text{ or } &\\
     (n_1, \cdots, n_{d-1}, n_d +1, n_d, \cdots,  n_k; p_1, \cdots,  p_{d-1}, 1, p_{d}-1, \cdots,p_k; t),  & \text{if }n_{d-1}>n_d + 1 > n_{d};
      \end{cases}
      \]
      \item[\emph{(iii)}] $ \Lambda (M(c))=\begin{cases}
          (n_1, \cdots, n_k, 2; p_1, \cdots, p_k, 1; t-1),  &\text{if } n_k >2,\\ 
      \text{or}&  \\
      (n_1, \cdots, n_k; p_1, \cdots, p_k + 1; t-1), & \text{if } n_k =2.
      \end{cases}$       
     \end{enumerate}
\end{enumerate}
\end{theorem}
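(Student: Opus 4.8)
The plan is to dispatch case (I) immediately and then concentrate all the work on case (II). For case (I), since $M=M(0)$ has $0$ as its only eigenvalue, $M$ is nilpotent and $\ggo_M$ is a nilpotent almost abelian Lie algebra; hence the equivalence ``$\ggo_M$ admits a symplectic structure if and only if $\Lambda(M)\in\bigwedge^s_{2n-1}$'' is exactly Corollary \ref{cor:classif-symp} (equivalently Theorem \ref{classifsymp}), and nothing further is needed.

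For case (II), suppose first that $\ggo_M$ admits a symplectic structure. By Corollary \ref{symp-v columna}, $M$ is conjugate to $C=\begin{pmatrix} c' & 0\\ w & D\end{pmatrix}$ with $D\in\spg_\Omega(2(n-1),\RR)$ for some $\Omega$, so by Theorem \ref{prop:symplex-nonzero} the spectrum of $D$ is symmetric under $\lambda\mapsto-\lambda$ with $\Lambda(D(\lambda))=\Lambda(D(-\lambda))$. Comparing multiplicities, the corner entry $c'$ raises $\operatorname{mult}(c')$ by one over its value in $D$ while leaving $\operatorname{mult}(-c')$ unchanged; combined with the symplectic symmetry $\operatorname{mult}_D(c')=\operatorname{mult}_D(-c')$, this shows $c'$ is the real eigenvalue of $M$ whose multiplicity exceeds that of its negative by exactly one. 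Since $m_c=n>n-1=m_{-c}$, this forces $c'=c$. Thus $D$ has only $c,-c$ as eigenvalues, each of multiplicity $n-1$, with $\Lambda(D(c))=\Lambda(D(-c))$. Splitting off the $(-c)$-block with Lemma \ref{B(a)+D} shows $C$, hence $M$, is conjugate to $C_c\oplus D(-c)$ where $C_c=\begin{pmatrix} c & 0\\ u & D(c)\end{pmatrix}$, so that $\Lambda(M(-c))=\Lambda(D(-c))=\Lambda(D(c))$ and $\Lambda(M(c))=\Lambda(C_c)$.

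The crux is then a shift to the nilpotent setting: $C_c-cI_n=\begin{pmatrix} 0 & 0\\ u & D(c)-cI_{n-1}\end{pmatrix}$ is nilpotent with $\Lambda(D(c)-cI_{n-1})=\Lambda(D(c))=\Lambda(M(-c))$ (Remark \ref{L(A)=L(N)}), so Corollary \ref{posibles_C} describes $\Lambda(M(c))=\Lambda(C_c)=\Lambda(C_c-cI_n)$ completely. When $\Lambda(M(-c))=(n-1)$ we have $D(c)=cI_{n-1}$, and the final clause of Corollary \ref{posibles_C} yields $\Lambda(M(c))=(2;1;n-2)$ if $u\neq0$ and $\Lambda(M(c))=(n)$ if $u=0$, matching the first $\diamond$; otherwise parts (a), (b), (c) of Corollary \ref{posibles_C} give precisely conditions (i), (ii), (iii), with (c) rewritten to respect $n_1>\cdots>n_k$ in the subcase $n_k=2$. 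For the converse I would run this backwards: given the prescribed tuple relation, I pick $D\in\spg(2(n-1),\RR)$ with $\Lambda(D(c))=\Lambda(D(-c))=\Lambda(M(-c))$, which exists by Theorem \ref{prop:symplex-nonzero} since for the nonzero pair $\pm c$ the only constraint is $\Lambda(D(c))=\Lambda(D(-c))$; then I choose $w$ supported on the $c$-block so that, by the realizability in Proposition \ref{(v,B)}, $\Lambda(C_c)$ equals the prescribed $\Lambda(M(c))$. The resulting $C=\begin{pmatrix} c & 0\\ w & D\end{pmatrix}$ is conjugate to $M$ and has the form demanded by Corollary \ref{symp-v columna} (using Remark \ref{rem-symp-conj} to place $D$ in the appropriate $\spg_\Omega$), so $\ggo_M$ admits a symplectic structure.

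The main obstacle I anticipate is twofold. First, the multiplicity bookkeeping that pins down $c'=c$ and, with it, the observation that the corner entry redistributes the Jordan structure \emph{only} at the eigenvalue $c$; this is exactly what makes $M(c)$ and $M(-c)$ play asymmetric roles in the statement. Second, the careful matching of the three transformations of Corollary \ref{posibles_C} to conditions (i)--(iii), including the degenerate input $(n-1)$ and the $n_k=2$ rewriting. The symplectic content itself is cleanly encapsulated by Theorem \ref{prop:symplex-nonzero}, so once the reduction to $C_c\oplus D(-c)$ is in place the remainder is essentially the nilpotent analysis transported by the shift $X\mapsto X-cI$.
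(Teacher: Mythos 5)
Your proposal is correct and follows essentially the same route as the paper's proof: case (I) via Theorem \ref{classifsymp}, and case (II) via Corollary \ref{symp-v columna}, Lemma \ref{B(a)+D}, the symmetry $\Lambda(D(c))=\Lambda(D(-c))$ from Theorem \ref{prop:symplex-nonzero}, the shift $X\mapsto X-cI$ together with Corollary \ref{posibles_C}, and the reverse construction $D_c\oplus M(-c)\in\spg(2(n-1),\RR)$ for the converse. The only cosmetic difference is that you pin down the corner entry $c'=c$ by an inline multiplicity count, whereas the paper delegates this to Proposition \ref{odd-space sum} and Remark \ref{M_c}; the content is identical.
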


\begin{proof}
Case (I) is just Theorem \ref{classifsymp}.

\noindent Case (II): Suppose that $\ggo_M$ admits a symplectic structure, then by Remark  \ref{M_c} and Corollary \ref{symp-v columna}, $M$ is conjugate to $C= \begin{pmatrix}
      c   & 0 \\
      w   & D
     \end{pmatrix}$ where $w\in \RR^{2(n-1)}$ and $D\in \spg(2(n-1), \RR)$. Therefore, by Theorem \ref{prop:symplex-nonzero},  $D$ satisfies condition  \eqref{symp-complex-eig} for $\lambda=c$. 
Since $m_c=n$ and $\text{spec}\, (M)=\{c, -c\}$, then $\text{spec}\, (D)=\{c, -c\}$. By Lemma \ref{B(a)+D}, $C$ is conjugate to $\left(\begin{array}{c|c|c}
      c  & 0  & 0\\
      \hline
      u &  D(c) & 0 \\
      \hline
      0 & 0 & D(-c)
      \end{array}\right)$, where $u\in \RR^{n-1}$.
Therefore, since $M$ is conjugate to the previous matrix and $D$ satisfies condition \eqref{symp-complex-eig} for $\lambda=c$, we obtain 
\begin{equation}\label{Lambda(M(-c))}
\Lambda (M (-c))= \Lambda (D (-c))=\Lambda (D (c)).
\end{equation}

First, we suppose that $\Lambda (M (-c))=(n-1)$, then the statement follows depending on whether $u=0$ or $u\neq 0$, which is equivalent to whether $M$ is diagonalizable or not. In fact, if $u=0$, then $M$ is conjugate to $c I_n \oplus (-c) I_{n-1}$ and $\Lambda(M(c))=(n)$; while if $u\neq 0$ then  by using a suitable change of basis it is easy to see that $\Lambda (M(c))=(2;1;n-2)$. 

Now, if $\Lambda (M(-c))=(n_1, \cdots, n_k; p_1, \cdots, p_k; t)$, we observe that $M(c)$ is conjugate to $\tilde{C}=\left(\begin{array}{cc}
      c  & 0  \\
      u &  D(c)       \end{array}\right)$. In particular, the non-zero nilpotent matrices $M(c)-cI_n$ and $\tilde{C}-cI_n$ are conjugate. 
      
Since, $\Lambda (M(c))= \Lambda (M(c)-cI_n)=\Lambda (\tilde{C}-cI_n)=\Lambda (\tilde{C})$,  it follows from \eqref{Lambda(M(-c))}   and Corollary  \ref{posibles_C} that $\Lambda (M(c))$ takes one of the forms (i), (ii) or (iii) of the statement.

Conversely, in all cases it can be shown that $M(c)$ is conjugate to $\left(\begin{array}{cc}
      c  & 0  \\
      u &  D_c       \end{array}\right)$ such that $\Lambda (D_c)=\Lambda (M(-c))$, for some $u\in \RR^{n-1}$.
Hence, $M$ is conjugate to $\left(\begin{array}{c|c|c}
      c  & 0  & 0\\
      \hline
      u &  D_c & 0 \\
      \hline
      0 & 0 & M(-c)
      \end{array}\right)$
where $D_c\oplus M(-c)\in \spg(2(n-1), \RR)$ by Theorem \ref{prop:symplex-nonzero}. Therefore, $\ggo_M$ admits a symplectic structure by Corollary \ref{symp-v columna}.
\end{proof}

\begin{remark}\label{pairstuples} 
    We point out that Theorem \ref{sympM(c)+M(-c)} (II) characterizes the matrices $M=M(c)\oplus M(-c)$ that admit a symplectic structure in terms of pairs of tuples $(\Lambda (M(-c)), \Lambda (M(c))) \in \Lambda _{n-1} \times \Lambda _{n}$, where $\Lambda (M(c))$ is determined by the tuple $\Lambda (M(-c))$ according to the possibilities given in (II).  
\end{remark}
Our next result generalizes Theorem 1.1 in \cite{CM}, which gives necessary and sufficient conditions for $\ggo_M$ to admit a symplectic structure, where $M\in M(2n-1, \RR )$ is a diagonal matrix. 

\begin{theorem}\label{sympM(c)+M(-c)+Q} Let $M\in M(2n-1, \RR )$, $n\geq 2$, $Q\in M(r, \RR)$, $r\geq 1,$  and  $c\in \emph{spec}_\RR (M)$ such that  $c\not\in \emph{spec}\,   (Q)$.
  \begin{enumerate}
   \item[\emph{(I)}] If $c=0$, $m_0>1$ is odd and
     $M=M(0) \oplus Q$, then $\ggo_M$ admits a symplectic structure if and only if 
     $\ggo_{M(0)}$ admits a symplectic structure and $Q\in \spg_{\Omega} (r,\RR)$, for some non-singular skew-symmetric $\Omega\in M(r, \RR)$.
     
\medskip
    
    \item[\emph{(II)}] Let $c\neq 0$ and $M=M_c \,\oplus Q$, where $M_c= M(c) \oplus M(-c)$,  $m_{-c}=m_c-1$ and  $- c\notin \emph{spec}\,(Q)$. Then $\ggo_M$ admits a symplectic structure if and only if $\ggo_{M_c}$ admits a symplectic structure and $Q\in \spg_{\Omega} (r,\RR)$, for some non-singular skew-symmetric $\Omega\in M(r, \RR)$.

\medskip
    \item[\emph{(III)}] If  $M=M(c)\oplus Q$  and $m_c=1$, then $\ggo_M$ admits a symplectic structure if and only if  $Q\in \spg_{\Omega} (r,\RR)$, for some non-singular skew-symmetric $\Omega\in M(r, \RR)$. 
   \end{enumerate}  
\end{theorem}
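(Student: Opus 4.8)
The plan is to establish all three cases at once, reducing each equivalence to the block description of Corollary \ref{symp-v columna} and the eigenvalue criterion of Theorem \ref{prop:symplex-nonzero}. Write $\mathrm{core}$ for $M(0)$ in (I), for $M_c=M(c)\oplus M(-c)$ in (II), and for $M(c)=(c)$ in (III).

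\emph{Sufficiency} is uniform and short. In each case the core Lie algebra admits a symplectic structure: by hypothesis in (I) and (II), and automatically in (III) since $\ggo_{(c)}$ is two-dimensional. Hence by Corollary \ref{symp-v columna} the core is conjugate to a block $\begin{pmatrix} c & 0 \\ w_0 & D_0\end{pmatrix}$ with $D_0\in\spg_{\Omega_0}$. Assuming $Q\in\spg_\Omega$ and using Remark \ref{suma w_i}, the matrix $M$, being conjugate to $\mathrm{core}\oplus Q$, is conjugate to the matrix whose corner is $c$, whose column below the corner is $(w_0,0)^T$, and whose lower-right $(2(n-1))\times(2(n-1))$ block is $D_0\oplus Q\in\spg_{\Omega_0\oplus\Omega}$. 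This is exactly the shape \eqref{eq:symp-matrix}, so $\ggo_M$ is symplectic by Corollary \ref{symp-v columna}.

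For \emph{necessity}, assume $\ggo_M$ is symplectic. Corollary \ref{symp-v columna} gives $M\sim C=\begin{pmatrix} c' & 0 \\ w & D\end{pmatrix}$ with $D\in\spg_{\Omega'}(2(n-1),\RR)$. First I would identify $c'=c$: the argument in Proposition \ref{odd-space sum} shows that the corner of any such block form is the distinguished real eigenvalue isolated in Remark \ref{M_c}, and in each of (I)--(III) the standing hypotheses force that eigenvalue to be $c$. Since $c'=c\notin\operatorname{spec}(Q)$ (and $-c\notin\operatorname{spec}(Q)$ in (II) and (III), as $-c\notin\operatorname{spec}(M)$ there), the spectrum of $D$ is $\operatorname{spec}(M)$ with one copy of $c$ deleted, and it partitions disjointly into the symmetric ``core'' set ($\{0\}$, $\{c,-c\}$, or $\emptyset$) and the set $\operatorname{spec}(Q)$, which is then itself stable under $\lambda\mapsto-\lambda$ because $\operatorname{spec}(D)$ is (as $D\in\spg$) and the core set is. Using the real generalized-eigenspace decomposition \eqref{Cdecomposition} I would write $D\sim D_{\mathrm{core}}\oplus D_Q$ along this splitting.

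The heart of the proof is transferring the symplectic conditions through this splitting. Conditions \eqref{symp-real-eig} and \eqref{symp-complex-eig} constrain the Jordan data of $D$ only at $0$ and at pairs $\{\lambda,-\lambda\}$; since $\operatorname{spec}(D_{\mathrm{core}})$ and $\operatorname{spec}(D_Q)$ are disjoint and each stable under negation, $D$ satisfies these conditions if and only if $D_{\mathrm{core}}$ and $D_Q$ both do, so by Theorem \ref{prop:symplex-nonzero} each summand is conjugate to a matrix in some $\spg$. Moreover, for every $\lambda\in\operatorname{spec}(Q)$ one has $\Lambda(D(\lambda))=\Lambda(M(\lambda))=\Lambda(Q(\lambda))$ --- the first equality because the corner $c\ne\lambda$ contributes nothing at $\lambda$, the second because the core carries no $\lambda$ --- whence $D_Q\sim Q$ and therefore $Q\in\spg_\Omega(r,\RR)$ for some non-singular skew-symmetric $\Omega$. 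The main obstacle is precisely ensuring that the $\pm\lambda$ pairing required by \eqref{symp-complex-eig} closes up inside $\operatorname{spec}(Q)$ rather than leaking into the core eigenvalues; this is exactly what the disjointness $c,-c\notin\operatorname{spec}(Q)$ together with the identification $c'=c$ provide. Finally, to recover the core I would detach $D_Q$ from $C$ via Lemma \ref{B(a)+D} (its eigenvalues are disjoint from $c$, so the corresponding entries of $w$ can be annihilated), obtaining $M\sim\begin{pmatrix} c & 0 \\ u & D_{\mathrm{core}}\end{pmatrix}\oplus D_Q$; comparing generalized eigenspaces gives $\begin{pmatrix} c & 0 \\ u & D_{\mathrm{core}}\end{pmatrix}\sim\mathrm{core}$, and since $D_{\mathrm{core}}$ is conjugate to a $\spg$-matrix this block has the form \eqref{eq:symp-matrix}, so the core Lie algebra is symplectic by Corollary \ref{symp-v columna}. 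In case (III) the core block is just $(c)$ and this last step is vacuous, and the theorem follows.
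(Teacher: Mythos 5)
Your proposal is correct and follows essentially the same route as the paper's proof: Corollary \ref{symp-v columna} for the block form, Theorem \ref{prop:symplex-nonzero} to split the symplectic eigenvalue conditions across the negation-stable spectral decomposition $D\sim D_{\mathrm{core}}\oplus D_Q$ from \eqref{Cdecomposition}, Lemma \ref{B(a)+D} to detach the part with spectrum disjoint from $c$, and Remark \ref{suma w_i} to reassemble in the converse direction; the only difference is that you run the argument uniformly while the paper treats (I)--(III) separately. One wording slip worth fixing: in case (II) the fact $-c\notin\operatorname{spec}(Q)$ is the explicit hypothesis and cannot be deduced from ``$-c\notin\operatorname{spec}(M)$'' (there $-c$ \emph{is} an eigenvalue of $M$, carried by $M(-c)$), but nothing in your argument actually depends on the misstated justification.
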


\begin{proof}
     We recall that if $\ggo_M$ admits a symplectic structure, then by Corollary \ref{symp-v columna}, $M$ is conjugate to 
     \begin{equation} \label{eq:symp-matrix2}
C= \begin{pmatrix}
      c   & 0 \\
      w   & D
\end{pmatrix}, \quad \text{where } c\in \RR , \,  w \in \mathbb R ^{2(m-1)}, \; D\in \spg(2(m-1), \RR).   
\end{equation}
     Therefore, by Theorem \ref{prop:symplex-nonzero},  $D$ satisfies conditions \eqref{symp-real-eig} and \eqref{symp-complex-eig}. We will use this notation several times throughout the proof.

     \smallskip

\noindent \textbf{Case (I):} Suppose that $c=0$, $m_0>1$ is odd and $M=M(0)\oplus Q$. Assume that $\ggo _M$ admits a symplectic structure.

Since $m_0>1$ we have that $0\in \text{spec}_\RR (D)$ and $D$ is conjugate to $D(0) \oplus Q'$ where $0\notin \text{spec} _\RR (Q')$. Then, Lemma~\ref{B(a)+D} implies that $C$ is conjugate to 
$$
C'=\left(\begin{array}{c|c|c}
      0  & 0  & 0\\
      \hline
      u &  D(0) & 0 \\
      \hline
      0 & 0 & Q'
      \end{array}\right),
$$      
where $u\in \RR ^{m_0 -1}$, $ D(0) \in \spg_{\Omega_1}(m_0 -1,\RR)$, and  $Q' \in \spg_{\Omega_2}(2n-1-m_0,\RR)$, since 
$D(0)$ and $Q'$ satisfy \eqref{symp-real-eig} and \eqref{symp-complex-eig}, respectively. In particular, $\ggo_{C_0}$ admits a symplectic structure for the non-zero nilpotent matrix $C_0:=\left(\begin{array}{cc}
      0  & 0  \\
      u &  D(0) \end{array}\right)$. In addition, since $C' = C_0\oplus Q'$ is conjugate to $M = M(0) \oplus Q$ it follows that $C_0$ is conjugate to $M(0)$ and $Q'$ is conjugate to $Q$. Finally, $\ggo_{M(0)}$ admits a symplectic structure and $Q\in \spg_{\Omega}(2n-1-m_0,\RR)$ for some  $\Omega$.

Conversely, suppose that $\ggo_{M(0)}$ admits a symplectic structure. In particular, by Corollary \ref{symp-v columna}, $M(0)$ is conjugate to a matrix $C_0 =\left(\begin{array}{cc}
      0  & 0  \\
      w' &  D'
      \end{array}\right)$, where $D'\in \spg (m_0-1, \RR)$ and $w'\in \RR ^{m_0 -1}$. By hypothesis, $Q\in \spg_{\Omega}( 2n-m_0 -1, \RR)$. Then, $\ggo_M$ admits a symplectic structure since $M$ is conjugate to $C_0 \oplus Q$, where $D'\oplus Q\in \spg_{\Omega'} (2n-1, \RR)$ with $\Omega'= J_{m_0-1}\oplus \Omega$ (see Remark \ref{suma w_i}). In this way, the equivalence given in Case (I) is completely proved.
      
\medskip
    
\noindent \textbf{Case (II):} Suppose $c\neq 0$, $M=M_c \,\oplus Q$, where $M_c= M(c) \oplus M(-c)$,  $m_{-c}=m_c-1$, and  $- c\notin \text{spec}\,(Q)$. 
If $\ggo _M$ admits a symplectic structure, then  $-c\in \text{spec} (D)$ with $D$ as in \eqref{eq:symp-matrix2}, since $-c\in \text{spec} (M)$.  By \eqref{Cdecomposition}, $D$ is conjugate to $D(c)\oplus D(-c) \oplus Q'$, where $\pm c \notin \text{spec}\,(Q')$. By Lemma \ref{B(a)+D}, $M$ is conjugate to
$$
\left(\begin{array}{c|c|c|c}
      c  & 0  & 0 & 0\\
      \hline
      u &  D(c) & 0 & 0\\
      \hline
      0 & 0 & D(-c) & 0\\
      \hline
      0 & 0 & 0 & Q'      
      \end{array}\right),
$$ 
and therefore $\Lambda (M(-c))=\Lambda (D(-c))$. 
In particular, $M_c=M(c) \oplus M(-c)$ is conjugate to 
$$
M'_c= \left(\begin{array}{c|c|c}
      c  & 0  & 0\\
      \hline
      u &  D(c) &  0\\
      \hline
      0 & 0 & D(-c) 
      \end{array}\right).
$$
and $Q$ is conjugate to $Q'$. Also, since $D\in \spg(2n-2,\RR)$, Theorem \ref{prop:symplex-nonzero} implies  that $\Lambda (D(c))=\Lambda (D(-c))$ and $Q'\in\spg_{\Omega'}(2m_c-2,\RR)$ for some $\Omega'$.  Then, $D(c)\oplus D(-c)\in \spg_{\Omega_1} (2m_c-2,\RR)$ and by Corollary \ref{symp-v columna} the Lie algebra $\ggo_{M'_c}$ admits a symplectic structure. Finally, the statement follows from the fact  that  $M_c$ and $Q$ are conjugate to $M_c'$ and $Q'$, respectively.

Conversely, if $\ggo_{M_c}$ admits a symplectic structure, then by Corollary \ref{symp-v columna}, $M_c$ is conjugate to $\left(\begin{array}{cc}
      c  & 0  \\
      u' &  D'
      \end{array}\right),$ where $D' \in \spg(2m_c-2,\RR)$. Then, $M$ is conjugate to $M_c \oplus Q$, where $Q\in \spg_{\Omega}(2n-2m_c,\RR)$ by hypothesis. Therefore, $M$ is conjugate to 
      $$
  \left(\begin{array}{c|c|c}
      c  & 0  & 0\\
      \hline
      u^{\prime} &  D' &  0\\
      \hline
      0 & 0 & Q 
      \end{array}\right),
$$    
where $D'\oplus Q\in  \spg_{\Omega'}(2n-2,\RR)$, where $\Omega'=J_{m_c -1}\oplus \Omega$, and Corollary \ref{symp-v columna} implies that $\ggo_M$ admits a symplectic structure.

\medskip

\noindent \textbf{Case (III):} Assume that $M=M(c)\oplus Q$  and $m_c=1$. If $\ggo _M$ admits a symplectic structure, since $Q$ is conjugate to $D$ with $D$ is as \eqref{eq:symp-matrix2}, then $Q\in \spg_{\Omega}(2n-2,\RR)$, for some $\Omega$. The converse in this case is straightforward by Corollary \ref{symp-v columna}.
\end{proof}
For $c\neq 0$ and $n\geq 2$, we denote by $\Sigma_{n,c}\subset \bigwedge _{n-1} \times \bigwedge _{n}$ the set of all pairs of tuples that we refer to in Remark \ref{pairstuples}.

Let $A\in M(2n-1, \RR)$ be a non-nilpotent matrix. By combining   Theorem \ref{sympM(c)+M(-c)}, Theorem \ref{sympM(c)+M(-c)+Q} and  Lemma \ref{lem:isom-classes} it follows that  
$\ggo_A$ admits a symplectic structure if and only if $A$ is conjugate to $b M$ for some  $b\in\RR, \, b\neq 0$, where either $1\in\text{spec}\,(M)$ or $0\in\text{spec}\,(M)$ and $M$ takes one of the  forms below:
\begin{enumerate}
 \item $M=M(0)\oplus Q$  with $m_0 \geq 1$ and odd,  $\Lambda (M(0))\in \bigwedge ^s_{m_0}$,  $Q\in\spg(2n-m_0-1, \RR)$ and $0\notin \text{spec}\,(Q)=\{ \mu_1, \ldots , \mu_s \}$ and $1=|\mu _1| \geq \cdots  \geq |\mu_s |>0$,
    \item  $M=M(1) \oplus Q$ where $m_1=1$,  $Q\in \spg(2n-2, \RR)$ and $1\notin \text{spec}\, (Q)$,
    \item $M=M(1)\oplus M(-1)$ where $m_1=m_{-1} +1$ and $(\Lambda (M (-1)), \Lambda (M (1)))\in \Sigma_{n,1}$, 
    \item $M=M(1)\oplus M(-1)\oplus Q$, where $(\Lambda (M (-1)), \Lambda (M (1)))\in \Sigma_{m_1,1}$ and  
    $Q\in \spg(2r, \RR)$ with $2r=2n-2-2m_{-1}$ and $\{1, -1 \}\not\subset \text {spec} \,(Q)$.
    \end{enumerate}\

We point out that, since $A$ is conjugate to $bM$ with $b\neq 0$,  it follows from Proposition \ref{ad-conjugated} that $\ggo_A$ is isomorphic to $\ggo_M$. Therefore, 
the isomorphism classes of $\ggo_A$ are parametrized by the isomorphism classes of $\ggo_M$. In order to obtain this parametrization, we  define the following quotient spaces:
\[ \mathcal S^1_r:=\{ Q\in \spg (2r, \RR) \,:\, 1 \notin \text {spec}\, (Q) \} /GL(2r,\RR ),  \]
\[   \mathcal{S}^0_r:= \{Q\in \spg (2r, \RR) : 0\notin \text {spec} (Q)  =\{ \mu_1, \ldots , \mu_s \} \text{ and } 1=|\mu _1| \geq \cdots  \geq |\mu_s |>0   \}/_{ \sim}
\]
where $Q_1\sim Q_2$ if and only if $Q_1$ is  $GL(2r,\RR )$-conjugate to $\pm Q_2$ (see Lemma \ref{lem:isom-classes}).

\begin{corollary} Let $A\in M(2n-1, \RR)$ be a non-nilpotent matrix. The isomorphism classes of $\ggo_A$  admitting a symplectic structure are
the  isomorphism classes of $\ggo_M$ for $M$ as in ${\rm (1)}$, ${\rm (2)}$, ${\rm (3)}$ or ${\rm (4)}$  above, which are parametrized, respectively, by:   
\begin{equation*}
   \text{${\rm (1)}$ } \;  \bigcup_{k=1}^{n-1} \, \left({\bigwedge} ^s_{{2k-1}}\times\; \mathcal S_{n-k}^0\right),
    \qquad \text{${\rm (2)}$ } \;  \mathcal S_{n-1}^1,  \qquad 
    \text{$\rm (3)$} \; \Sigma_{n,1},
    \qquad
    \text{${\rm (4)}$ } \;\bigcup_{k=2}^{n-1} \,\left( \Sigma_{k,1} \times\; \mathcal S_{n-k}^1\right).    
\end{equation*}
\end{corollary}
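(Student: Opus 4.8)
The plan is to read off the parametrization from the characterization established just above, using only Proposition~\ref{ad-conjugated} (the isomorphism criterion) together with the normalization in Lemma~\ref{lem:isom-classes}. By that characterization every non-nilpotent $\ggo_A$ carrying a symplectic structure is isomorphic to some $\ggo_M$ with $M$ in exactly one of the forms $(1)$--$(4)$, and by Proposition~\ref{odd-space sum} (equivalently Remark~\ref{M_c}) each admissible $M$ satisfies precisely one of the mutually exclusive alternatives (I), (II), (III) according to whether the odd part of its spectrum is carried by $0$, by an imbalanced pair $\{c,-c\}$ with $m_{-c}=m_c-1$, or by a simple eigenvalue $c$ with $-c\notin\text{spec}(M)$. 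Since the relevant spectral data ($m_0$, resp. $m_c$, and the conjugacy class of $M$) are invariants of the isomorphism class, the four families are pairwise disjoint, and it suffices to exhibit, within each family, a bijection from the indicated set onto the corresponding set of isomorphism classes.

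I would then treat the families uniformly by combining Proposition~\ref{ad-conjugated} with Lemma~\ref{lem:isom-classes}: $\ggo_M\cong\ggo_{M'}$ iff $M'$ is $GL$-conjugate to $cM$ for some $c\neq0$, and the normalization pins down $c$. In family $(1)$ the distinguished eigenvalue is $0$ and the largest-modulus eigenvalue of the symplectic block $Q$ is scaled to $1$; the only surviving freedom is $c=\pm1$, which fixes the nilpotent block $M(0)$ (as $-N$ is conjugate to $N$, so $\Lambda(M(0))$ is unchanged) and acts on $Q$ by $Q\mapsto -Q$. By Lemma~\ref{lem:isom-classes}(ii) this is exactly the equivalence defining $\mathcal S^0_{n-k}$, while the conjugacy class of $M(0)$ is recorded by $\Lambda(M(0))\in\bigwedge^s_{m_0}$ via the bijection $\tilde\Lambda$ of \eqref{eq:bijection-tuples}; letting $m_0=2k-1$ range over the admissible odd sizes gives $\bigcup_{k=1}^{n-1}\big(\bigwedge^s_{2k-1}\times\mathcal S^0_{n-k}\big)$. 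In families $(2)$--$(4)$ the eigenvalue $+1$ is rigidly pinned (in $(3)$, $(4)$ together with $-1$ of strictly smaller multiplicity), so Lemma~\ref{lem:isom-classes}(i) forces $c=1$ and no $\pm$-ambiguity remains: rescaling by $-1$ would turn the simple eigenvalue into $-1$ or interchange the unequal multiplicities of $\pm1$, leaving the canonical form. Hence family $(2)$ is parametrized by $[Q]\in\mathcal S^1_{n-1}$ alone, family $(3)$ (with $Q$ absent and $m_1=n$) by the pair $(\Lambda(M(-1)),\Lambda(M(1)))\in\Sigma_{n,1}$, and family $(4)$ by $(\Lambda(M(-1)),\Lambda(M(1)),[Q])\in\Sigma_{k,1}\times\mathcal S^1_{n-k}$ with $k=m_1$ running over $2\le k\le n-1$.

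Surjectivity of each assignment is immediate, since every element of the indicated set is manifestly the data of an admissible $M$; here it is worth noting that the spectral conditions in the definitions of $\mathcal S^1_r$ and $\mathcal S^0_r$ are the correct ones because a symplectic matrix has $\pm$-symmetric spectrum, so $1\notin\text{spec}(Q)$ already forces $\pm1\notin\text{spec}(Q)$. The main obstacle, and the one point needing care, is injectivity: that distinct data produce non-isomorphic $\ggo_M$. This reduces to the assertion that the $GL$-conjugacy class of $M$ is determined by the separate conjugacy classes of its distinguished $0$- or $\pm1$-block and of the complementary block $Q$, which holds because these blocks occupy disjoint unions of generalized eigenspaces and $Q$ is intrinsically recovered as the restriction of $M$ to the eigenvalues outside $\{0\}$, resp. $\{1\}$ or $\{1,-1\}$. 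Feeding this through the bijectivity of $\tilde\Lambda$, the uniqueness built into $\Sigma_{k,1}$ (Remark~\ref{pairstuples}), and the quotient definitions of $\mathcal S^0_r$ and $\mathcal S^1_r$ then yields injectivity, and assembling the four disjoint families completes the parametrization.
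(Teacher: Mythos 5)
Your proposal is correct and follows essentially the same route the paper intends: the corollary is stated there as an immediate consequence of the preceding discussion (the four normal forms obtained from Theorem \ref{sympM(c)+M(-c)}, Theorem \ref{sympM(c)+M(-c)+Q} and Lemma \ref{lem:isom-classes}, together with the quotient definitions of $\mathcal S^0_r$, $\mathcal S^1_r$ and $\Sigma_{n,1}$), and your write-up simply makes that argument explicit. In particular, your treatment of the residual scaling freedom --- $c=\pm1$ in family (1), matching the $\sim$ in $\mathcal S^0_r$, versus $c=1$ being pinned in families (2)--(4) --- and the injectivity via the generalized eigenspace decomposition are exactly the points the paper's discussion (notably Lemma \ref{lem:isom-classes} and Remark \ref{M_c}) is relying on.
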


\section{Applications of Theorem   \ref{complex_M(a)+Q} and  Theorem \ref{sympM(c)+M(-c)+Q}}

We start by giving families of almost abelian Lie algebras admitting complex or symplectic structures, or neither complex nor symplectic structure. 
\begin{example}
 Let $M\in M(2n-1, \RR )$ such that   $M=M(1) \oplus Q$ with $m_1>1$ , $m_1$ odd,  $1 \not\in \text{spec}\, (Q)$ and 
 $-1 \not\in \text{spec}\, (M)$. If the conditions in Theorem \ref{complex_M(a)+Q} are satisfied, then $\ggo_M$ admits a complex structure but does not admit a symplectic structure. These shows that Theorem \ref{complex-sympl} does not hold in the general case, that is when $\ggo_M$ is an almost abelian solvable Lie algebra which is not nilpotent.  
\end{example}

\begin{example}
 Let $M\in M(2n-1, \RR )$ such that   $M=M(1) \oplus Q$ with $m_1>1$, $m_1$ odd, $1 \not\in \text{spec}\, (Q)$ and 
 $-1 \not\in \text{spec}\, (M)$. If any of the conditions in Theorem \ref{complex_M(a)+Q} are not satisfied then $\ggo_M$ admits neither
complex nor symplectic structures.    
\end{example}

\begin{example}
Let $\ggo_M$ be an almost abelian Lie algebra.
\begin{itemize}
    \item If $M=\J_{m+1}(1) \oplus \J_m(1) \oplus \J_{m}(-1) \oplus \J_m(-1)$, then $\ggo_M$ admits both, complex and symplectic structures for $m \geq 2$.
    \item If $M=\J_m(1)$ with $m$ odd, then $\ggo_M$ admits neither complex nor symplectic structures. 
    \item If $M=\J_{m+1}(1) \oplus \J_m(1) \oplus (-1) I_{t}$, then $\ggo_M$ admits complex but no symplectic structure for $m \geq 2$, $t$ even.  
    \item If $M=\J_{m+1}(1) \oplus \J_m(-1)$, then $\ggo_M$ admits symplectic but no complex structure for $m \geq 2$. 
\end{itemize}    
\end{example}
As a consequence of Theorem  \ref{complex_M(a)+Q} and Theorem \ref{sympM(c)+M(-c)+Q} we show that if $\ggo_M$ admits a complex or symplectic structure, then so do the almost abelian Lie algebras associated to the semisimple and nilpotent parts of $M$.
\begin{theorem}\label{teo:nilpotent-part}
   Let $\ggo_M$ be a $2n$-dimensional almost abelian Lie algebra and consider the Jordan-Chevalley decomposition of $M$, $M=M_s+M_n$ where $M_s$ is semisimple and $M_n$ is nilpotent. Assume that $\ggo_M$ admits a complex structure \emph{(}resp., symplectic structure\emph{)}, then $\ggo_{M_s}$ and $\ggo_{M_n}$ admit a complex structure \emph{(}resp., symplectic structure\emph{)}. 
\end{theorem}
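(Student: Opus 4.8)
The plan is to handle the complex and symplectic cases in parallel, reducing each to the final assertions of Proposition \ref{solv-complex2n} and Theorem \ref{prop:symplex-nonzero}, which already dispose of semisimple and nilpotent parts inside the relevant matrix subalgebras. I begin with the complex case. Assume $\ggo_M$ admits a complex structure. By Remark \ref{rem:complex}, $M$ is conjugate to a matrix of the form \eqref{eq:complex-abelian},
\[
C = \twomatrix{a}{0}{v}{B}, \qquad a \in \RR, \; v \in \RR^{2(n-1)}, \; B \in M_J(2(n-1), \RR),
\]
with $J^2 = -I$. Writing $M = PCP^{-1}$, uniqueness of the Jordan--Chevalley decomposition gives $M_s = PC_sP^{-1}$ and $M_n = PC_nP^{-1}$, so it suffices to analyze $C_s$ and $C_n$.

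The first key step is to determine the block shape of $C_s$ and $C_n$. I would use that $C_s = p(C)$ for a real polynomial $p$ satisfying $p(x) \equiv \lambda \pmod{(x-\lambda)^{N_\lambda}}$ at each eigenvalue $\lambda$ of $C$, where $N_\lambda$ is its algebraic multiplicity. Since $C$ is block lower triangular, every power of $C$—hence $p(C)$—has vanishing $(1,2)$-block, so $C_s = \minimatrix{a}{0}{v'}{B_s}$ for some $v'$: the $(1,1)$-entry is $p(a)=a$, while the $(2,2)$-block equals $p(B)$, which is $B_s$ because the multiplicity in $C$ of each eigenvalue of $B$ dominates the corresponding Jordan block sizes of $B$. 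Therefore $C_n = C - C_s = \minimatrix{0}{0}{v-v'}{B_n}$. Thus $M_s$ and $M_n$ are conjugate to matrices of the form \eqref{eq:complex-abelian} whose bottom-right blocks are $B_s$ and $B_n$.

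Now I apply the last assertion of Proposition \ref{solv-complex2n}: since $B \in M_J(2(n-1),\RR)$, its parts $B_s$ and $B_n$ are conjugate to matrices in $M_{J'}(2(n-1),\RR)$ and $M_{J''}(2(n-1),\RR)$ for suitable $J',J''$. A one-line conjugation argument (compare Remark \ref{rem_complex-conj}) upgrades this to $B_s \in M_{\tilde J}$ and $B_n \in M_{\tilde J''}$, where $\tilde J = P_1^{-1}J'P_1$ and similarly for $\tilde J''$, both still squaring to $-I$. Hence $C_s$ and $C_n$ have precisely the shape \eqref{eq:complex-abelian}, so the converse direction of Remark \ref{rem:complex} yields complex structures on $\ggo_{C_s}$ and $\ggo_{C_n}$; by Proposition \ref{ad-conjugated} the same holds for $\ggo_{M_s}$ and $\ggo_{M_n}$.

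The symplectic case is entirely parallel: Corollary \ref{symp-v columna} puts $M$ in the block lower-triangular form \eqref{eq:symp-matrix} with $(2,2)$-block $D \in \spg_\Omega(2(n-1),\RR)$; the same polynomial argument exhibits $D_s$ and $D_n$ as the $(2,2)$-blocks of $C_s$ and $C_n$; the last assertion of Theorem \ref{prop:symplex-nonzero}, combined with the conjugation statement of Remark \ref{rem-symp-conj}, places $D_s$ and $D_n$ inside $\spg_{\tilde\Omega}$ and $\spg_{\tilde\Omega'}$ for non-singular skew-symmetric forms $\tilde\Omega,\tilde\Omega'$; and Corollary \ref{symp-v columna} together with Proposition \ref{ad-conjugated} concludes. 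I expect the only genuine obstacle to be the block computation of the Jordan--Chevalley decomposition—specifically, the clean verification that the $(2,2)$-block of $C_s$ is exactly $B_s$ (resp. $D_s$) and not merely some semisimple matrix; this is precisely where expressing $C_s$ as a polynomial in the block-triangular $C$ and restricting to the invariant subspace $\{0\}\oplus\RR^{2(n-1)}$ carries the argument.
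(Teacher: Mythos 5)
Your proof is correct, and it takes a genuinely different route from the paper's. The paper proves this theorem as an application of its classification machinery: it first reduces $M$ to a normal form (via Proposition \ref{odd-space} in the complex case, and the three cases of Remark \ref{M_c} in the symplectic case), computes $M_s$ and $M_n$ block by block in that normal form, and then verifies that the combinatorial conditions of Theorems \ref{complex_M(a)+Q} and \ref{classifcomplex} (resp.\ Theorems \ref{sympM(c)+M(-c)+Q}, \ref{sympM(c)+M(-c)} and \ref{classifsymp}) — membership of the relevant $\Lambda$-tuples in ${\bigwedge}^c$, ${\bigwedge}^s$ or $\Sigma_{m_c,c}$ — are inherited by the semisimple and nilpotent parts. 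Your argument bypasses the classification entirely: you conjugate $M$ into the block lower-triangular form of Remark \ref{rem:complex} (resp.\ Corollary \ref{symp-v columna}), use the polynomial expression $C_s=p(C)$ to see that the Jordan--Chevalley decomposition respects the triangular shape with $(2,2)$-blocks exactly $B_s,B_n$ (resp.\ $D_s,D_n$) — and your justification is sound, since the multiplicity of each eigenvalue of $B$ in $C$ dominates the corresponding Jordan block sizes of $B$, so $p(B)=B_s$ — and then you only need the final assertions of Proposition \ref{solv-complex2n} and Theorem \ref{prop:symplex-nonzero} together with the (correct) observation that the properties ``commutes with some $J$ with $J^2=-I$'' and ``lies in $\spg_{\Omega}$ for some non-singular skew-symmetric $\Omega$'' are invariant under conjugation. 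What your approach buys is brevity, no case analysis, and logical independence from the classification theorems; what the paper's approach buys is consistency with its theme (the result is presented as an application of the classification, and the proof shows explicitly how the $\Lambda$-tuple data behaves under the decomposition). As a further simplification available to you in the complex case: since $B_s=p(B)$ and $B_n=B-B_s$ are polynomials in $B$, they commute with the \emph{same} $J$ as $B$, so even the appeal to Proposition \ref{solv-complex2n} is unnecessary there (this shortcut fails in the symplectic case, since $\spg_{\Omega}$ is not closed under products, which is precisely why the last assertion of Theorem \ref{prop:symplex-nonzero} is the right tool).
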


\begin{proof} Since for $M=M_n$ or $M=M_s$ the statement trivially holds, we suppose that $M_s\neq 0_{2n-1}$ and $M_n\neq 0_{2n-1}$. 

If $\ggo_M$ admits a complex structure, then, by Proposition \ref{odd-space}, we may assume that $M=M(a)$ or $M=M(a)\oplus Q$ with $a\in\RR$, $m_a$ odd and $a\notin \text{spec}\,(Q)$. By Theorem \ref{complex_M(a)+Q}, $\Lambda (M(a))\in {\bigwedge}^c_{m_a}$ and $Q\in M_{J}(m,\RR)$, for some $J$ such that $J^2=-I_m$ with $m=2n-1-m_a$.

 We notice that \begin{itemize} 
 \item if $M=M(a)$, then $M_s=aI_{m_a}$ and $M_n= (M(a)-a I_{m_a})$,  
 \item if $M=M(a)\oplus Q$, then  $M_s=aI_{m_a}\oplus Q_s$ and $M_n= (M(a)-a I_{m_a})\oplus Q_n$ (taking $Q_s=0$ when $Q$ is a nilpotent matrix and $Q_n =0$ when $Q$ is a semisimple matrix).
 \end{itemize}
 Since $\Lambda (M(a)-a I_{m_a})=\Lambda (M(a))\in \bigwedge^c _{m_a}$ and $\Lambda (a I_{m_a})\in \bigwedge^c _{m_a}$, 
 then from Theorem \ref{complex_M(a)+Q}, Theorem \ref{classifcomplex} and Proposition \ref{solv-complex2n}, we obtain that $\ggo_{M_s}$ and $\ggo_{M_n}$ admit a complex structure. 

Now, if $\ggo_M$ admits a symplectic structure, then, by Remark \ref{M_c}, we may assume that $M$ takes one of the following forms: 
\begin{enumerate}
        \item[(i)] $M=M(0)\oplus Q$  where $m_0>1$ is odd, $m_0 \leq 2n-3$ and $0\not\in \text{spec}_\RR (Q)$,  
        \item[(ii)] $M=M(c)\oplus M(-c)$ or $M=M(c)\oplus M(-c) \oplus Q$ where $m_{-c}=m_c -1 \geq 1$ and $\pm c\not\in \text{spec}_\RR  (Q)$,
        \item[(iii)] $M=M(c)\oplus Q$ where $m_c=1$ and $c\not\in \text{spec}_\RR (Q)$. 
     \end{enumerate}

If $M$ is as in (i), by Theorem \ref{sympM(c)+M(-c)+Q} (I) we have $\Lambda (M(0))\in \bigwedge^s_{m_0}$, $0\notin \text {spec } (Q)$ and $Q\in \spg_{\Omega}(2r, \RR)$ for some $\Omega$. Then, $M_s=0_{m_0} \oplus Q_s$ and $M_n=M(0)\oplus Q_n$. Theorem \ref{sympM(c)+M(-c)+Q} (I), Theorem \ref{classifsymp} and \ref{prop:symplex-nonzero} imply that  $\ggo_{M_s}$ and $\ggo_{M_n}$ admit a symplectic structure.  

If $M$ is as in (ii), by  Theorem \ref{sympM(c)+M(-c)} (II) and Theorem \ref{sympM(c)+M(-c)+Q} (II) we have $(\Lambda (M(-c)), \Lambda (M(c))) \in \Sigma _{m_c,c}$ and $Q\in \spg_{\Omega}(2r, \RR)$ for some $\Omega$. Then 
$$
M_s=cI_{m_c} \oplus (-c)I_{m_{-c}}\,\,\, \text{and} \,\,\,M_n=(M(c)-cI_{m_c}) \oplus \left(M(-c)-(-c)I_{m_{-c}}\right),
$$ 
or 
$$
M_s=cI_{m_c} \oplus (-c)I_{m_{-c}} \oplus Q_s \,\,\, \text{and}\, \,\,M_n=(M(c)-cI_{m_c}) \oplus \left(M(-c)-(-c)I_{m_{-c}}\right) \oplus Q_n,
$$
respectively.
Hence, $(\Lambda (-cI_{m_{-c}}), \Lambda (cI_{m_c})) \in \Sigma _{m_c,c}$ and 
\begin{equation} \label{eq:sympss}
(\Lambda (M(-c)+c I_{m_{-c}}), \Lambda (M(c)-cI_{m_{c}}))=(\Lambda (M(-c)), \Lambda (M(c))) \in \Sigma _{m_c,c}.   
\end{equation}
Using \eqref{eq:sympss} it can be shown that
$\Lambda((M(c)-cI_{m_c}) \oplus \left(M(-c)-(-c)I_{m_{-c}}\right))\in \bigwedge^s_{2m_c-1}.$  Theorem \ref{sympM(c)+M(-c)+Q} (II), Theorem \ref{classifsymp} and \ref{prop:symplex-nonzero} imply that  $\ggo_{M_s}$ and $\ggo_{M_n}$ admit a symplectic structure.

Finally, if $M$ is as in (iii), by  Theorem \ref{sympM(c)+M(-c)+Q} (III), we have that $Q\in \spg_{\Omega}(2n-2, \RR)$ for some $\Omega$. Since  $M_s=M(c)\oplus Q_s$ and $M_n=0_1\oplus Q_n$, 
Theorem \ref{prop:symplex-nonzero} and Corollary \ref{symp-v columna} imply that $\ggo_{M_s}$ and $\ggo_{M_n}$ admit a symplectic structure.
\end{proof}
\begin{example} 
The converse of Theorem \ref{teo:nilpotent-part}  does not hold. In fact, let  $b\in \RR$, $b\neq 1$  and let $m$ and  $\ell$ be distinct odd positive integers.  Consider 
\[ M= 0_1\oplus \J_{m}(1)\oplus \J_{\ell}(1) \oplus \J_{m}(b) \oplus\J_{\ell}(b) ,\] then the semisimple and nilpotent parts of $M$ are given by 
\[ M_s=0_1 \oplus  I_{m+\ell } \oplus b I_{m+\ell } , \quad \text{and}\quad  \quad M_n=\J_m ^{\oplus 2} \oplus \J_\ell  ^{\oplus 2} \oplus 0_1 ,\]
 respectively. By Theorem   \ref{complex_M(a)+Q}, it follows that $\ggo _M$  does not admit any complex structure. However, both $\ggo _{M_{n}}$ and $\ggo _{M_{s}}$ admit complex structures.

Now,  let $k$ and $t$ be even and odd positive integers, respectively. Consider 
\[
M=I_{k+1}\oplus (-1)I_k \oplus \J_{k+1}\oplus 0_t,
\]
then the semisimple and nilpotent parts of $M$ are given by 
\[ M_s= I_{k+1} \oplus (-1) I_{k} \oplus 0_{k+1+t}, \quad\text{and} \quad  \quad M_n=\J_{k+1} \oplus  0_{2k+1+t} ,\]
respectively. By Theorem \ref{sympM(c)+M(-c)+Q}, it follows that $\ggo _M$ does not admit any symplectic structure. However, both $\ggo _{M_{n}}$ and $\ggo _{M_{s}}$ admit symplectic structures.
\end{example}

\end{document}